\DeclareMathOperator\dd{d}
\renewcommand\paragraph{\@startsection{paragraph}{4}{\z@}%
            {-2.5ex\@plus -1ex \@minus -.25ex}%
            {1.25ex \@plus .25ex}%
            {\normalfont\normalsize\bfseries}}
\DeclareFontFamily{OMX} {MnSymbolE}{}
\DeclareFontShape{OMX}{MnSymbolE}{m}{n}{
  <-6> MnSymbolE5
  <6-7> MnSymbolE6
  <7-8> MnSymbolE7
  <8-9> MnSymbolE8
  <9-10> MnSymbolE9
  <10-12> MnSymbolE10
  <12-> MnSymbolE12}{}
\DeclareFontShape{OMX}{MnSymbolE}{b}{n}{
  <-6> MnSymbolE-Bold5
  <6-7> MnSymbolE-Bold6
  <7-8> MnSymbolE-Bold7
  <8-9> MnSymbolE-Bold8
  <9-10> MnSymbolE-Bold9
  <10-12> MnSymbolE-Bold10
  <12-> MnSymbolE-Bold12}{}
\DeclareSymbolFont{MnSyE} {OMX} {MnSymbolE}{m}{n}
\DeclareMathDelimiter{\llangle}{\mathopen}{MnSyE}{116}{MnSyE}{120}
\DeclareMathDelimiter{\rrangle}{\mathopen}{MnSyE}{121}{MnSyE}{125}
\title{Transformations and singularities of polarized curves}
\date{July 2018}
\author{Andreas Fuchs 
	}
\begin{document}
\newcommand{\ltz}{\lim_{t\rightarrow 0}}

\newcommand{\Com}{\mathbb C}

\newcommand{\Nat}{\mathbb N}
\newcommand{\Pro}{\mathbb P}
\newcommand{\R}{\mathbb R}

\newcommand{\cB}{\mathcal B}
\newcommand{\Fr}{\mathcal F}
\newcommand{\I}{\mathcal I}
\newcommand{\Li}{\mathcal L}
\newcommand{\Ord}{\mathcal O}
\newcommand{\cS}{\mathcal S}
\newcommand{\cU}{\mathcal U}
\newcommand{\cV}{\mathcal V}

\newcommand{\afr}{\mathfrak {a}}
\newcommand{\bfr}{\mathfrak {b}}
\newcommand{\cfr}{\mathfrak {c}}
\newcommand{\Cfr}{\mathfrak {C}}
\newcommand{\ffr}{\mathfrak {f}}
\newcommand{\chfr}{\hat{\mathfrak {c}}}
\newcommand{\fhfr}{\hat{\mathfrak {f}}}
\newcommand{\gfr}{\mathfrak {g}}
\newcommand{\so}{\mathfrak {so}}
\newcommand{\gli}{\mathfrak {gl}}
\newcommand{\nfr}{\mathfrak {n}}
\newcommand{\ort}{\mathfrak {o}}
\newcommand{\pro}{\mathfrak {p}}

\newcommand{\ipl}{\llangle} 
\newcommand{\ipr}{\rrangle} 
\newcommand{\Ipl}{\left\llangle} 
\newcommand{\Ipr}{\right\rrangle} 

\newcommand{\id}{t}
\newcommand{\Rmn}{\R^{n+2}_1}
\newcommand{\bs}{\backslash}
\newcommand{\bd}{\ell}

\newcommand{\gatr}[2]{#1\!\ltimes \! #2} 
\newcommand{\Gtr}[3]{#1 \ltimes_{\!#2} #3}

\newcommand{\Qend}{\mathfrak Q}   
\newcommand{\Qendc}{\mathfrak Q}   
\newcommand{\Qho}{\mathcal Q}   
\newcommand{\Qua}{Q}   
\newcommand{\Pol}{Q}   
\newcommand{\of}{\go} 
\newcommand{\Of}{\gO} 
\newcommand{\pf}{\psi} 
\newcommand{\Pf}{\Psi} 
\newcommand{\ppf}{\xi} 
\newcommand{\Ppf}{\boldsymbol{\Xi}} 
\newcommand{\aof}{\psi} 
\newcommand{\Aof}{\Psi} 
\newcommand{\baszer}{o}
\newcommand{\basinf}{\boldsymbol{\iota}}
\newcommand{\bastan}{\mathfrak t}
\newcommand{\basnor}{\mathfrak n}

\newcommand{\na}{\nabla}
\newcommand{\la}{\langle}
\newcommand{\ra}{\rangle}
\newcommand{\La}{\left\langle}
\newcommand{\Ra}{\right\rangle}
\newcommand{\LiSe}{\pounds}

\newcommand{\pr}[1]{{#1}^{\prime}}
\newcommand{\dpr}[1]{{#1}^{\prime\prime}}
\newcommand{\ti}[1]{\tilde{#1}}
\newcommand{\p}{\partial}
\newcommand{\ch}{{\hat c}}
\newcommand{\Ch}{{\hat C}}

\newcommand{\inp}[1]{\left(#1\right)}

\newcommand{\hyfr}{{g}}

\newtheorem{proposition}{Proposition}[section]
\newtheorem{theorem}[proposition]{Theorem}
\newtheorem{lemma}[proposition]{Lemma}
\newtheorem{corollary}[proposition]{Corollary}
\newtheorem{definition}[proposition]{Definition}
\newtheorem{defiandlemma}[proposition]{Definition and Lemma}
\newtheorem{remark}[proposition]{Remark}
\newtheorem{example}{Example}
\newcommand{\ga}{\alpha}
\newcommand{\gb}{\beta}
\newcommand{\gga}{\gamma}
\newcommand{\gd}{\delta}
\newcommand{\gD}{\Delta}
\newcommand{\gth}{\theta}
\newcommand{\vth}{\vartheta}
\newcommand{\vphi}{\varphi}
\newcommand{\gk}{\kappa}
\newcommand{\gl}{\lambda}
\newcommand{\gL}{\Lambda}
\newcommand{\go}{\omega}
\newcommand{\gO}{\Omega}
\newcommand{\gs}{\sigma}
\newcommand{\gS}{\Sigma}
\newcommand{\gep}{\epsilon}

\newcommand{\gG}{\Gamma}
\maketitle

\begin{abstract}
We study the limiting behaviour of Darboux and Calapso transforms of polarized curves in the conformal $n$-dimensional sphere, when the polarization has a pole of first or second order at some point. We prove that for a pole of first order, as the singularity is approached all Darboux transforms converge to the original curve and all Calapso transforms converge. For a pole of second order, a generic Darboux transform converges to the original curve while a Calapso transform has a limit point or a limit circle, depending on the value of the transformation parameter. In particular, our results apply to Darboux and Calapso transforms of isothermic surfaces when a singular umbilic with index $\frac{1}{2}$ or $1$ is approached along a curvature line.


\end{abstract}

\section{Introduction}
Transformations of surfaces play a central role in our present understanding of smooth and discrete differential geometry. Not only do they allow the construction of new surfaces of a given class from existing ones, but their existence reveals a great deal about the underlying (integrable) structure of the corresponding classes of surfaces, cf \cite{bur06}. For example, given a pseudospherical surface ($K=-1$), the B\"acklund transformation yields a two-parameter family of new pseudospherical surfaces by solving an integrable, first order partial differential equation (see \cite[\S 120]{eis60}). In \cite{ter00}, these B\"acklund transformations are shown to be generators of an infinite-dimensional transformation group of the space of pseudospherical surfaces and the relations among these generators are given by a permutability theorem discovered by Bianchi \cite[\S 257]{bia99} in the nineteenth century. In particular, it is shown that pseudospherical surfaces fall into the class of integrable systems. Similar developments have been achieved for other classes of surfaces such as minimal surfaces, constant mean curvature surfaces, constant Gaussian curvature surfaces (cf \cite{bob94}) and curved flats in symmetric spaces such as Darboux pairs of isothermic surfaces, cf \cite{cie95}, \cite{fer96} and \cite{bur97}.

In the discrete theory, the importance of transformations becomes even more apparent, as articulated in \cite{bob07} (see also \cite{bob08}): ``In this setting, discrete surfaces appear as two-dimensional layers of multidimensional discrete nets, and their transformations correspond to shifts in the transversal lattice directions. A characteristic feature of the theory is that all lattice directions are on equal footing with respect to the defining geometric properties. Due to this symmetry, discrete surfaces and their transformations become indistinguishable.''

The interplay between aspects of discrete and smooth differential geometry was explored in \cite{bur16} with the study of semi-discrete isothermic surfaces, introduced in \cite{mue13}. In analogy to the transformation theory of smooth isothermic surfaces (see \cite[Ch 8.6]{jer03}, \cite{bur06} or \cite{bur10}), the authors develop a notion of Christoffel, Darboux and Calapso transformations of polarized curves, that is, smooth curves equipped with a nowhere zero quadratic differential, called a polarization. They then show that semi-discrete isothermic surfaces are sequences of Darboux transforms of polarized curves. In line with the ideas of discrete differential geometry, permutability theorems of the transformations of polarized curves are shown to yield a corresponding transformation theory for semi-discrete isothermic surfaces.

We are interested in the class of smooth isothermic surfaces, classically characterized by the local existence of conformal curvature line coordinates, away from umbilics. For these surfaces, the transformation theory is defined only locally, that is, on simply connected surface patches on which regular nets of conformal curvature line coordinates exist (cf \cite{bur06}, \cite{jer03} and \cite{smy04}). A global transformation theory is still missing. In particular, it seems necessary to reconsider the definition of an isothermic surface. The classical definition makes no restriction whatsoever on umbilic points. But certain configurations of umbilics are an obstacle for a global definition of the transformations (see \cite[\S 5.2.20]{jer03}). One candidate for an alternative definition of an isothermic surface is to require the existence of a globally defined holomorphic quadratic differential whose trajectories (see \cite[Sect 5.5]{str84}) agree with the curvature lines of the surface on the complement of its umbilic set. But due to the Poincar\'e-Hopf theorem \cite{hop89}, this would exclude all surfaces homeomorphic to a sphere, for example the ellipsoid, cf \cite{smy04}. If we merely require the existence of a meromorphic quadratic differential, that includes topological spheres. Moreover, according to the local Carath\'eodory conjecture, the poles of such a meromorphic quadratic differential are at most of order two (see \cite{gui08}). 

This paper makes a first step towards a global transformation theory of isothermic surfaces for which an underlying meromorphic quadratic differential exists. A curvature line of an isothermic surface together with the restriction of such a meromorphic quadratic differential yields a polarized curve in the sense of \cite[Sect 2]{bur16}. Moreover, the transformations for isothermic surfaces descend to the corresponding transformations of the polarized curvature lines. Thus, in order to understand how the transforms of an isothermic surface behave when a pole of the underlying meromorphic quadratic differential is approached along a curvature line, we investigate how Darboux and Calapso transforms of polarized curves with singular polarizations behave when the singularity is approached. In particular, we investigate the limiting behaviour of Darboux and Calapso transforms of polarized curves where the polarization has a pole of first or second order. According to the local Carath\'eodory conjecture, these are the only poles that can occur on a curvature line of an isothermic surface. 

In Sect \ref{section:defi_trafos}, we give M\"obius geometric definitions of the Darboux and Calapso transformations of polarized curves in the conformal $n$-dimensional sphere $S^n$. Our definitions are formulated with a projective model of M\"obius geometry: we identify $S^n$ with the projectivization $\Pro (\Li^{n+1})$ of the light cone $\Li^{n+1}$ in $n+2$-dimensional Minkowski space. The M\"obius group can then be identified with the projective Lorentz group $\Pro O(\Rmn)$. Curves are described as maps into the projective light cone. For computations and in order to prove that our definitions agree with those of \cite{bur16}, we relate the projective model to its linearisation based on the action of $O(\Rmn)$ on $\Li^{n+1}$, where curves are described via their light cone lifts. We then give a definition of a polarization with a pole of first or second order. 

The central object in our definition of the transformations (Def \ref{defi:trafos}) are the primitives $\gG_p(\gl\of)$ of a family $(\gl\of)_{\gl\in\R}$ of 1-forms associated to a polarized curve. Here, the primitive of a 1-form $\pf$ with values in the Lie algebra $\pro\ort(\Rmn)$ of $\Pro O(\Rmn)$ is the unique map $t\mapsto\gG_p^{~t}(\pf)$ into $\Pro O(\Rmn)$ which satisfies
$$\dd\gG_p(\pf)=\gG_p(\pf)\pf,~~~\gG_p^{~p}(\pf)=id,$$
cf \cite{sha97}. When the polarization has a pole of first or second order at some point, the 1-forms $\gl\of$ associated to the polarized curve also have a pole at that point and hence the primitives are not defined there. Nevertheless, as we prove in Cor \ref{cor:maintool_sing_firstkind} and Prop \ref{prop:maintool_sing} in Sect \ref{section:maintool}, the primitives of a certain class of $\pro\ort(\Rmn)$-valued 1-forms with a pole of first order do have a limit in $\Pro End(\Rmn)\supset \Pro O(\Rmn)$ at the singular point. 

In Sect \ref{section:first_order_pole}, we use these results to show that when the polarization of a polarized curve has a pole of first order, every Calapso transform converges to some point and all Darboux transforms converge to the original curve.

In Section \ref{section:sec_order_pole}, we consider the case of a polarization with a pole of second order. We cannot apply the results of Sect \ref{section:maintool} here directly because the 1-forms $\gl\of$ associated to the polarized curve have poles of second order. We first have to do a singular gauge transformation to transform the 1-forms with poles of second order to 1-forms with a pole of first order. The behaviour of the Darboux and Calapso transforms in this case is more diverse. A generic Darboux transform still converges to the original curve as the singularity is approached, but there are Darboux transforms which do not converge. Moreover, the Calapso transform has a limit point or a limit circle, depending on the value of the transformation parameter $\gl$.

\textit{Acknowledgements:} The author would like to thank G. Szewieczek for fruitful discussions and S. Fujimori for his hospitality during a visit to Okayama University, where parts of this work were elaborated. Special thanks go to U. Hertrich-Jeromin for his numerous, valuable comments and his support as supervisor of the author's PhD thesis. This work has been supported by the Austrian Science Fund (FWF) and the Japan
Society for the Promotion of Science (JSPS) through the FWF/JSPS Joint Project grant I1671-N26
``Transformations and Singularities''.

\section{Darboux and Calapso transforms of polarized curves}\label{section:defi_trafos}
In this section we define Darboux and Calapso transforms of polarized curves, show that our definitions agree with those of \cite{bur16} and specify the goal of this paper: the study of the limiting behaviour of Darboux and Calapso transforms at points where the polarization has a pole of first or second order.

We use the projective model of M\"obius geometry (cf \cite[Sect 1.1.]{bur06} and \cite[Ch 1]{jer03}) and identify the conformal $n$-sphere $S^n$ with the projectivization $\Pro (\Li^{n+1})$ of the light cone in $n+2$-dimensional Minkowski space. In this way, the action of the M\"obius group on $S^n$ can be identified with the action of the projective Lorentz group $\Pro O(\Rmn)$ on $\Pro(\Li^{n+1})$. The projective Lorentz group is diffeomorphic to the group $O^+(\Rmn)$ of orthochronous Lorentz transformations via the diffeomorphism $\la \cdot\ra$ of taking the linear span. Its differential at the identity yields the isomorphism of Lie algebras
\begin{align}
\dd_{id} \la\cdot\ra:~\ort(\Rmn)&\rightarrow \pro\ort(\Rmn),\notag\\
v\wedge w&\mapsto v\wedge w+\R~id,\label{eq:iso_liealg}
\end{align}
where we further identify $\gL^2(\Rmn)$ with $\ort(\Rmn)$ via
$$(v\wedge w)(x)=\ipl v,x\ipr\, w-\ipl w,x\ipr \,v$$
for $v,w,x\in\Rmn$. Here $\ipl \cdot,\cdot\ipr$ denotes the Minkowski inner product on $\Rmn$. 

A regular curve $\la c\ra$ in $S^n=\Pro(\Li^{n+1})$ is an immersion of some interval $(a,b)$ into $\Pro(\Li^{n+1})$, where the notation $\la c\ra$ indicates that such a map may also be described by pointwise taking the linear span of a light cone lift $c:(a,b)\rightarrow \Li^{n+1}$ of $\la c\ra$. For simplicity, we always assume $\la c\ra$ to be smooth. A polarized curve $(\la c\ra,Q)$ in $S^n$ is a regular curve $\la c\ra$ in $S^n$ together with a nowhere zero quadratic differential $Q$ on $(a,b)$. To a polarized curve $(\la c\ra,Q)$ we associate the $\pro\ort(\Rmn)$-valued 1-form $\of$ given by
\begin{equation}
\of=\Qend~c\wedge \dd c+\R~id,
\label{eq:defi_ass_oneform}
\end{equation}
where $c$ is any light cone lift of $\la c\ra$ and the function $\Qend$ is related to $Q$ via that lift $c$ by
$$Q=\Qend~\|\dd c\|^2.$$
The 1-form $\of$ is independent of the choice of lift $c$ of $\la c\ra$. 

Let $G$ be a Lie group with Lie algebra $\gfr$ and $\pf$ a $\gfr$-valued 1-form on $(a,b)$. Then for any $p\in (a,b)$, denote by 
$$\gG_{p}(\pf):~(a,b)\rightarrow G,~~t\mapsto \gG_{p}^{~t}(\pf),$$
the unique primitive that satisfies (cf \cite[Ch 3, Thm 6.1]{sha97})  
\begin{equation}
\dd\gG_{p}(\pf)=\gG_{p}(\pf)\pf,~~~~\gG_{p}^{~p}(\pf)=id.
\label{eq:propsgG1}
\end{equation}
From the defining properties \eqref{eq:propsgG1} of $\gG_{p}(\pf)$ and its uniqueness it follows readily that
\begin{equation}
\forall p,t\in(a,b):~~~~~\gG_{p}^{~t}(\pf)\gG_{t}(\pf)=\gG_{p}(\pf).
\label{eq:propsgG2}
\end{equation}
Therefore, the map $\gG^{~p}(\pf):(a,b)\ni t\mapsto \gG_t^{~p}(\pf)\in G$ is the composition of $\gG_{p}(\pf)$ with taking the inverse in $G$ and thus satisfies
$$\dd \gG^{~p}(\pf)=-\pf \gG^{~p}(\pf),~~~~\gG_p^{~p}(\pf)=id.$$
Under a gauge transformation 
$$\pf\mapsto \gatr g \pf:=g^{-1}\pf g+g^{-1}\dd g$$
 using a smooth map $g:(a,b)\rightarrow G$, the primitives $\gG_{p}(\pf)$ transform as
\begin{equation}
\gG_{p}(\pf)=g(p)\,\gG_{p}(\gatr g \pf)\,g^{-1}.
\label{eq:gauge_trafo_beha}
\end{equation}

\begin{definition}\label{defi:trafos}
Let $(\la c\ra,Q)$ be a polarized curve in $S^n$ with associated 1-form $\of$ given by \eqref{eq:defi_ass_oneform}.

For $\gl\in\R\bs\{0\}$, $p\in (a,b)$ and $\la \ch_p\ra\in S^n$, the curve
$$\la \ch\ra:=\gG^{~p}(\gl\of)\la \ch_p\ra$$
is called the $\gl$-\textit{Darboux transform} of $(\la c\ra,Q)$ with initial point $\la \ch(p)\ra=\la \ch_p\ra$.

For $\gl\in\R$ and $p\in (a,b)$, the curve
$$\la c_{\gl,p}\ra:=\gG_{p}(\gl\of)\la c\ra$$
is called the $\gl$-\textit{Calapso transform of $(\la c\ra,Q)$ normalized at} $p$.
\end{definition}
Due to \eqref{eq:propsgG2}, two $\gl$-Calapso transforms normalized at $p$ and $q$, respectively, differ by a M\"obius transformation. Moreover, the Darboux and Calapso transformations of a polarized curve $(\la c\ra,Q)$ in $S^n$ are invariant under M\"obius transformations of $S^n$ and reparametrizations of $\la c\ra$. We remark that our definition is somewhat asymmetric because the transform of a polarized curve is not again a polarized curve, but merely a curve. But as we do not study repeated transformations of polarized curves here, there is no necessity nor benefit in equipping the transforms of a polarized curve with a polarization. 

This paper is devoted to the study of the limiting behaviour of the Darboux and Calapso transforms at $a$ in the case that $\la c\ra$ can be extended regularly to some $(a-\gep,b)$, but the polarization has a pole of first or second order at $a$ in the sense of
\begin{definition}\label{defi:pole}
A quadratic differential $Q:(a,b)\ni t\mapsto Q_t=Q(t)\dd t^2$ or a Lie algebra-valued 1-form $\pf:(a,b)\ni t\mapsto \pf_t=\pf(t)\dd t$ on $(a,b)$ has a pole of order $k\in\mathbb N$ at $a$ if the function $t\mapsto Q(t)(a-t)^k$ or $t\mapsto\pf(t)(a-t)^k$, respectively, has a smooth extension to $(a-\gep,b)$ for some $\gep>0$ with nonzero value at $a$.
\end{definition}
If $\la c\ra$ has a smooth extension to $(a-\gep,b)$, then the polarization $Q$ has a pole of first or second order at $a$ if and only if the 1-form $\go$ associated to $(\la c\ra,Q)$ has a pole of first or second order at $a$.


The above definition of a pole is invariant under those diffeomorphisms of $(a,b)$ to $(\ti a,\ti b)$ which extend to diffeomorphisms of some larger interval $(a-\gep,b)$ to $(\ti a-\ti\gep,\ti b)$ with $\gep,\ti\gep>0$, e.g. translations 
$$(a,b)\ni t\mapsto t-a\in (0,b-a).$$
Together with the reparametrization invariance of the Darboux and Calapso transformation, we can thus make the convenient choice $a=0$.
\vspace{0.1cm}\\


Our definitions of the transformations of a polarized curve are formulated entirely in the projective model of M\"obius geometry, where $\Pro O(\Rmn)$ acts on $\Pro (\Li^{n+1})$ and surfaces are described as immersions into $\Pro (\Li^{n+1})$. Especially in computations, a linearisation of this model proves useful, where $O^+(\Rmn)$ acts on the light cone $\Li^{n+1}$ and surfaces are described via their light cone lifts as immersions into $\Li^{n+1}$. Let $\pf$ be a $\pro\ort(\Rmn)$-valued 1-form on $(0,b)$ and $\Pf$ its \textit{orthogonal lift} defined by $\Pf+\R~id=\pf$ via the isomorphism \eqref{eq:iso_liealg}. Then, the $O^+(\Rmn)$-valued primitives $\gG_p(\Pf)$ satisfy
$$\forall p\in(0,b):~~~~~\dd \la \gG_p(\Pf)\ra=\la \gG_p(\Pf)\ra \dd \la\cdot\ra(\Pf)=\la \gG_p(\Pf)\ra \psi~~~~~\text{and}~~~~~\la \gG_p^{~p}(\Pf)\ra=id.$$
Thus, the maps $\la \gG_p(\Pf)\ra$ are exactly the primitives of $\pf$. More generally, for any real, nowhere vanishing function $h$ on $(0,b)$, we have
\begin{equation}
\forall p\in(0,b):~~~~~\gG_p(\pf)=\gG_p(\Pf+\R~id)=\la \gG_p(\Pf)\ra=\la h\, \gG_p(\Pf)\ra.
\label{eq:equality_of_prims}
\end{equation}
In particular, this identity allows to compute the primitives of a $\pro\ort(\Rmn)$-valued 1-form via arbitrary nowhere zero rescalings of the primitives of its orthogonal lift. 

The relation \eqref{eq:equality_of_prims} also enables us to show that our Def \ref{defi:trafos} agrees with the corresponding ones in \cite{bur16}. Namely, let $(\la c\ra,Q)$ be a polarized curve with associated 1-form $\go$ and orthogonal lift $\gO$. Using \eqref{eq:equality_of_prims}, it follows that with $\ch_p\in \la \ch_p\ra$ the lift $\ch=\gG^{~p}(\gl\Of)\ch_p$ of a $\gl$-Darboux transform $\la \ch\ra$ of $(\la c\ra,Q)$ as defined in Def \ref{defi:trafos} is a parallel section of the connection
$$\frac{D^\gl}{\dd t}=\frac{\dd}{\dd t}+\gl \Qend\,c\wedge \pr c~~~~\text{with}~~~~\Qend\,\|\dd c\|^2=Q,$$
and hence yields a $(-\tfrac \gl 2)$-Darboux transform\footnote{Our parameter $\gl$ is $-1/2$ times the parameter $t$ used in \cite{bur16}.} of $(\la c\ra,Q)$ in the sense of \cite[Def 2.5]{bur16}. Conversely, any parallel section $\ch$ of $\frac{D^\gl}{\dd t}$ yields a $\gl$-Darboux transform $\la \ch\ra$ according to our Def \ref{defi:trafos}.

Similarly, the $\gl$-Calapso transform normalized at some $p\in (0,b)$ up to M\"obius transformation corresponds to the $(-\tfrac \gl 2)$-Calapso transform defined in \cite[Def 3.4]{bur16} because, up to M\"obius transformation, their Calapso transformation $T^{-\gl/2}$ is precisely $\gG_p(\gl\Of)$.

Not surprisingly, the main advantage of the linearisation of the projective formalism is its linear structure. For example, integration of \eqref{eq:propsgG1} shows that the primitives of an $\ort(\Rmn)$-valued 1-form $\Pf$ on $(0,b)$ satisfy the useful integral equation in $\R^{(n+2)^2}$
\begin{equation}
\forall p,t\in(0,b):~~~~\gG_p^{~t}(\Pf)=id+\int_p^{t}\gG_p^{~\tau}(\Pf)\Pf(\tau)\dd\tau.
\label{eq:integral_eq}
\end{equation}

Its disadvantage, on the other hand, is its reliance on the choice of lifts. As we will see in Sect \ref{section:maintool}, when a $\pro\ort(\Rmn)$-valued 1-form $\pf$ and thus also its orthogonal lift $\Pf$ on $(0,b)$ have a pole, it may happen that the $O^+(\Rmn)$-valued primitives $\gG_p(\Pf)$ tend towards infinity as the pole is approached, while the corresponding $\Pro O(\Rmn)$-valued primitives $\gG_p(\pf)$ have limits in $\Pro (\R^{(n+2)^2})\supset \Pro O(\Rmn)$. If this is the case for the 1-form $\of$ associated to a polarized curve and its orthogonal lift $\gO$, then the lifts $\ch=\gG^{~p}(\gl\Of)\ch_p$ and $c_{\gl,p}=\gG_p(\gl\Of)c$ diverge as the singularity is approached,  while the corresponding maps $\la \ch\ra$ and $\la c_{\gl,p}\ra$ converge. In such a situation, a lift independent formalism is clearly beneficial.

\section{Main tools}\label{section:maintool}
In this Section, we study the limiting behaviour at $0$ of the primitives $\gG_{p}(\pf)$ of a certain class of $\pro\ort(\Rmn)$-valued 1-forms $\pf$ on $(0,b)$ with a pole of first order at $0$. In Subsection \ref{section:fixonedim}, we restrict to 1-forms of a particularly simple form, which we call \textit{pure pole forms}. For those, the primitives and their limiting behaviour at $0$ can be computed explicitly. In Subsection \ref{section:genoneform} we then relate the limiting behaviour of primitives of the more general pole forms to that of the pure pole forms. In view of our definition of the transformations of a polarized curve via the primitives of multiples $\gl\of$ of the associated 1-form $\of$ (Def \ref{defi:trafos}), these tools will provide the means to investigate the limiting behaviour of the transforms of a singular polarized curve in Sects \ref{section:first_order_pole} and \ref{section:sec_order_pole}.

\subsection{Primitives of pure pole forms}\label{section:fixonedim}
We call a $\pro\ort(\R^{n+2}_1)$-valued 1-form $\ppf$ on $(0,b)$ a \textit{pure pole form} if it is of the form
$$\ppf_t=-v\wedge w \frac{\dd t}{\id}+\R~id$$
for some $v,w\in \Rmn$ with $v\wedge w\neq 0$. Pure pole forms come in three types corresponding to the three signatures which the linear span $\la v,w\ra\subset \Rmn$ can have. Accordingly, we speak of Minkowski, degenerate and spacelike pure pole forms. 

In addition to the quadratic form $\|\cdot\|^2$ induced by the indefinite Minkowski inner product $\ipl \cdot,\cdot\ipr$, we introduce the standard, Euclidean, positive-definite norm $|\cdot|$ on $\R^{n+2}$ and $\R^{(n+2)^2}$. On $End(\Rmn)\simeq \R^{(n+2)^2}$, the norm $|\cdot|$ is submultiplicative and satisfies $| A^*|=|A|$ for $A^*$ the adjoint of $A\in End(\Rmn)$ with respect to the \textit{Minkowski} inner product on $\Rmn$. On $\pro\ort(\Rmn)$, we define $|\cdot|$ by the push-forward of $|\cdot|$ restricted to $\ort(\Rmn)\subset \R^{(n+2)^2}$ via the isomorphism \eqref{eq:iso_liealg}. Furthermore, for $w\in \Rmn$, we denote by $w^*$ the linear map $w^*:v\mapsto \ipl v,w\ipr$.

\begin{proposition}\label{prop:boundednonzerosimplesections}
Let $\ppf$ be a pure pole form on $(0,b)$ and $\Ppf$ its orthogonal lift. Then there is a constant $B\in\R$ such that
\begin{enumerate}
	\item if $\ppf$ is Minkowski, then
	\begin{align}
	\forall p\in(0,b)~\forall t\in (0,p]:~~~~&\left|\left(\frac{t}{p}\right)^\zeta\gG_{p}^{~t}(\Ppf)\right| < B,\label{eq:bound_sim_min}\\
	\forall p\in(0,b):~~~~&\lim_{t\rightarrow 0}\left(\frac{t}{p}\right)^\zeta\gG_{p}^{~t}(\Ppf)= \frac{v_+v_-^*}{\ipl v_+,v_- \ipr},\label{eq:lim_sim_min}
	\end{align}
	where $v_\pm\in\la v,w\ra\cap\Li^{n+1}$ are eigenvectors of $v\wedge w$ with eigenvalues $\pm\zeta$, $\zeta>0$;
	\item if $\ppf$ is spacelike, then
	\begin{equation}
	\forall p,t\in(0,b):~~~~\left|\gG_{p}^{~t}(\Ppf)\right| < B,~~~~~~~~~
	\label{eq:bound_sim_spa}
	\end{equation}
	but $\gG_p(\ppf)$ does not converge as $0$ is approached as it is a rotation in the plane $\la v,w\ra$ with speed increasing towards infinity;
	
	\item if $\ppf$ is degenerate, choose $v_0\in \la v,w\ra\cap \Li^{n+1}$ and $\ti w\in \la v,w\ra$ such that $v\wedge w=v_0\wedge \ti w$. Then
	\begin{align}
	~~~~~~~~~~~~~~\forall p,t\in(0,b):~~~~&\left|\frac{1}{\bd(p,t)}\gG_{p}^{~t}(\Ppf)\right|<B,\label{eq:bound_sim_deg}\\
~~~~~~~~~~~\forall p\in(0,b):~~~~&\lim_{t\rightarrow 0}\frac{1}{\bd(p,t)}\gG_{p}^{~t}(\Ppf)=-\frac{\|\ti w\|^2}2 v_0v_0^*,\label{eq:lim_sim_deg}
	\end{align}
	where $\bd:(0,b)^2\rightarrow \R$ is the symmetric function
	\begin{equation}
	\bd:(p,t)\mapsto 1+\left(\ln\left(\frac{t}{p}\right)\right)^2.
	\label{eq:defi_bound_ell}
	\end{equation}
\end{enumerate}
\end{proposition}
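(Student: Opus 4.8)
The plan is to exploit that a pure pole form has \emph{constant} coefficient matrix, so that its primitive is an explicit matrix exponential, and then to read off all three cases from the spectral type of that matrix. Write $A:=v\wedge w\in\ort(\Rmn)$, so that the orthogonal lift is $\Ppf=-A\,\frac{\dd t}{\id}$. Because $A$ is constant, the defining equation \eqref{eq:propsgG1}, which here reads $\frac{\dd}{\dd t}\gG_{p}^{~t}(\Ppf)=-\tfrac1t\,\gG_{p}^{~t}(\Ppf)\,A$, integrates immediately to
$$\gG_{p}^{~t}(\Ppf)=\exp\!\Big(-A\ln\tfrac{t}{p}\Big)=\exp(-sA),\qquad s:=\ln\tfrac{t}{p},$$
the only input being that $A$ commutes with itself. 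Since $t\rightarrow 0$ corresponds to $s\rightarrow-\infty$, the entire proposition reduces to understanding $\exp(-sA)$ as $s\rightarrow-\infty$, and this is governed by the Jordan/spectral structure of $A$, which in turn is fixed by the signature of $\la v,w\ra$; note that $A$ annihilates $\la v,w\ra^\perp$, so all of the action is concentrated on the plane.

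In the Minkowski case I would diagonalize $A$ over $\R$: on the null basis $v_\pm$ of the Lorentzian plane $\la v,w\ra$ it has eigenvalues $\pm\zeta$, and it vanishes on $\la v,w\ra^\perp$. Hence $\exp(-sA)$ acts by $e^{\mp s\zeta}$ on $v_\pm$ and by $1$ on the complement, so multiplying by $(t/p)^\zeta=e^{s\zeta}$ produces the three factors $1,\ e^{2s\zeta},\ e^{s\zeta}$. For $t\le p$ we have $s\le 0$, so all three lie in $(0,1]$, and writing the operator in a fixed basis yields the uniform bound \eqref{eq:bound_sim_min} with $B$ independent of $p$ and $t$. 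Letting $s\rightarrow-\infty$ kills the $v_-$ and complement factors and leaves exactly the spectral projector onto the $+\zeta$-eigenline along the rest; since $v_\pm$ are null with $\ipl v_+,v_-\ipr\neq 0$ while $\ipl\cdot,v_-\ipr$ vanishes on $\la v,w\ra^\perp$, this projector is $\frac{v_+v_-^*}{\ipl v_+,v_-\ipr}$, which is \eqref{eq:lim_sim_min}.

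When $\la v,w\ra$ is spacelike, $A$ restricts to a skew map of a definite plane, so its spectrum is imaginary, $\pm i\zeta$, and $\exp(-sA)$ is a rotation through an angle proportional to $s$; rotations are uniformly bounded in $|\cdot|$, giving \eqref{eq:bound_sim_spa}, while the angle runs off to infinity as $s\rightarrow-\infty$, which is exactly the stated non-convergence. When $\la v,w\ra$ is degenerate I would use $v\wedge w=v_0\wedge\ti w$ with $v_0$ null in the radical and $\ipl v_0,\ti w\ipr=0$; a direct computation then gives $A^2=-\|\ti w\|^2\,v_0v_0^*$ and $A^3=0$, so the exponential truncates to the polynomial $\exp(-sA)=id-sA+\tfrac{s^2}{2}A^2$. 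Dividing by $\bd(p,t)=1+s^2$, the three terms scale like $\tfrac{1}{1+s^2},\ \tfrac{s}{1+s^2},\ \tfrac{s^2}{1+s^2}$, all bounded for $s\in\R$, which yields \eqref{eq:bound_sim_deg}; and as $s\rightarrow-\infty$ only the top term survives, leaving $\tfrac12A^2=-\tfrac{\|\ti w\|^2}{2}v_0v_0^*$ as in \eqref{eq:lim_sim_deg}.

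The computations themselves are routine once the exponential is in hand; the only genuinely delicate point is the uniformity of $B$ in both $p$ and $t$. This comes essentially for free from the observation that after the substitution $s=\ln(t/p)$ every quantity depends on $p,t$ only through $s$, and on the relevant range ($s\le 0$ in the Minkowski case, all $s\in\R$ otherwise) the scaling factors $(t/p)^\zeta$ and $1/\bd$ were chosen precisely to keep the matrix entries bounded. Matching each normalization to the correct spectral type of $A$ — and, in the Minkowski case, identifying the limit operator with the stated rank-one expression — is where the care is needed.
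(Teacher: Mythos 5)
Your proposal is correct and takes essentially the same approach as the paper: both proofs reduce $\gG_{p}^{~t}(\Ppf)$ to the explicit exponential $e^{-\ln(t/p)\,v\wedge w}$ and then analyze it by the spectral type of $v\wedge w$ on $\la v,w\ra$ --- the null eigenbasis $v_\pm$ and the rank-one projector $\frac{v_+v_-^*}{\ipl v_+,v_-\ipr}$ in the Minkowski case, compactness of the rotation subgroup with angle $\sim\ln(t/p)$ in the spacelike case, and nilpotency $(v\wedge w)^3=0$ truncating the series in the degenerate case. The only cosmetic difference is that the paper sums the power series via the identity $(s\,v_-\wedge v_+)^k=(s\zeta)^k\frac{v_+v_-^*}{\zeta}+(-s\zeta)^k\frac{v_-v_+^*}{\zeta}$, whereas you diagonalize directly; the resulting formulas, uniform bounds, and limits agree term by term.
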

\begin{proof}

First we note that $\gG_{p}^{~t}(\Ppf)$ is given by the exponential
\begin{equation}
\gG_{p}^{~t}(\Ppf)=e^{-\int_{p}^t\frac{\dd \tau}{\tau}v\wedge w}=e^{-\ln\left(\frac{t}{p}\right)v\wedge w}.
\label{eq:sim_paralltr}
\end{equation}

\begin{enumerate}
\item For $\la v,w\ra$ Minkowski, we may choose $v_\pm\in\Li^{n+1}$ such that $\ipl v_+,v_- \ipr>0$ and $v\wedge w=v_-\wedge v_+$. Then $v_\pm$ are eigenvectors of $v\wedge w$ with eigenvalues $\pm\zeta=\pm\ipl v_+,v_-\ipr$ and $\la v_+,v_-\ra^\perp=ker(v\wedge w)$. Using that
$$(s~v_-\wedge v_+)^k=(s\zeta)^k~\left(\frac{v_+v_-^*}\zeta\right)^k+(-s\zeta)^k\left(\frac{v_-v_+^*}\zeta\right)^k=(s\zeta)^k\frac{v_+v_-^*}\zeta+(-s\zeta)^k\frac{v_-v_+^*}\zeta$$
for all $s\in\R$ and $k\in\mathbb N$, the exponential computes to
\begin{equation}
\gG_{p}^{~t}(\Ppf)=e^{-\ln\left(\frac{t}{p}\right)v\wedge w}=\left(id-\frac{v_+v_-^*+v_-v_+^*}{\zeta}\right)+\left(\frac{t}{p}\right)^{-\zeta}~\frac{v_+v_-^*}{\zeta}+\left(\frac{t}{p}\right)^{\zeta}~\frac{v_-v_+^*}{\zeta}.
\label{eq:expo_mink_purepoleform}
\end{equation}
Therefore, 
$$\left(\frac{t}{p}\right)^{\zeta}\gG_{p}^{~t}(\Ppf)=\left(\frac{t}{p}\right)^{\zeta}\left(id-\frac{v_+v_-^*+v_-v_+^*}{\zeta}\right)+\frac{v_+v_-^*}{\zeta}+\left(\frac{t}{p}\right)^{2\zeta}~\frac{v_-v_+^*}{\zeta}.$$
Since $\zeta>0$, taking the limit $t\rightarrow 0$ yields \eqref{eq:lim_sim_min}. Since $|t/p|\leq 1$ for all $p\in(0,b)$ and $t\in (0,p]$, there is a constant $B\in\R$ such that \eqref{eq:bound_sim_min} holds.

\item For $\la v,w\ra$ spacelike, $\gG_{p}^{~t}(\Ppf)$ takes values in a 1-parameter subgroup  $O(\R^2)\subset O^+(\Rmn)$ of Euclidean rotations. Since $O(\R^2)$ is compact, $\gG_{p}^{~t}(\Ppf)$ is bounded for all $p,t\in (0,b)$ such that \eqref{eq:bound_sim_spa} holds for some $B\in\R$. To see that $\gG_p(\ppf)$ does not have a limit at $0$, denote by $i\zeta$ a nonzero eigenvalue of $v\wedge w$. Under a Lie group isomorphism $O(\R^2)\rightarrow \R~mod~2\pi$, the primitive $\gG_{p}^{~t}(\Ppf)$ gets mapped to $\pm\ln\left(\frac{\id}{p}|\zeta|\right)~mod~2\pi$ which does not have a limit for any $p\in (0,b)$ as $t$ tends to $0$. Hence, also $\gG_{p}^{~t}(\Ppf)$ does not have a limit for any $p\in (0,b)$ as $t$ tends to $0$.

\item Now let $\ppf$ be degenerate with $v_0\in \la v,w\ra\cap\Li^{n+1}$ and $\ti w\in \la v,w\ra$ such that $v\wedge w=v_0\wedge \ti w$. Then $(v\wedge w)^2=-\|\ti w\|^2v_0v_0^*$ and $(v\wedge w)^k=0$ for all $k\geq 3$. With $\bd$ as in \eqref{eq:defi_bound_ell}, we thus get
\begin{equation}
\frac 1{\bd(p,t)}\gG_{p}^{~t}(\Ppf)=
\frac{id-\ln\left(\frac{t}{p}\right)~v\wedge w-\frac{\left(\ln\left(\frac{t}{p}\right)\right)^2\|\ti w\|^2}2v_0v_0^*}{1+\left(\ln\left(\frac{t}{p}\right)\right)^2}.
\label{eq:temp_sim_deg}
\end{equation}
Taking the limit $t\rightarrow 0$ yields \eqref{eq:lim_sim_deg}. The bound \eqref{eq:bound_sim_deg} follows from the boundedness of $\frac 1{1+x^2}$, $\frac{x}{1+x^2}$, $\frac{x^2}{1+x^2}$ for all $x\in\R$.
\end{enumerate}
\qed
\end{proof}
The 1-parameter families $(\gG_{p}^{~t}(\ppf))_{t\in(0,b)}$ have simple Euclidean interpretations: For Minkowski $\ppf$, stereographically project $S^n\bs\{ \la v_+\ra\}$ to $\R^n$. Then $(\gG_{p}^{~t}(\ppf))_{t\in(0,b)}$ acts as a family of similarities with centre $\la v_-\ra$ of $\R^n$ and a $t$-dependent scale factor that tends towards infinity as $t\rightarrow 0$. The limiting map $\la v_+v_-^*\ra$ maps all points to the point at infinity, $\la v_+\ra$, except the centre, $\la v_-\ra$, where $\la v_+v_-^*\ra$ is not defined.\\
If $\ppf$ is spacelike, $(\gG_{p}^{~t}(\ppf))_{t\in(0,b)}$ takes values in a 1-parameter subgroup  $O(\R^2)\subset O^+(\Rmn)$ of Euclidean rotations. As $t$ approaches $0$, the rotation speed increases towards infinity.\\
For degenerate $\ppf$ and $v_0\in \la v,w\ra\cap\Li^{n+1}$, stereographically project $S^n\bs \{\la v_0\ra\}$ to $\R^n$. Then $(\gG_{p}^{~t}(\ppf))_{t\in(0,b)}$ acts as a family of translations of $\R^n$ by a vector of constant direction and $t$-dependent length which tends towards infinity as $t\rightarrow 0$. The limiting map $\la v_0v_0^*\ra$ maps all points to the point at infinity, $\la v_0\ra$, except $\la v_0\ra$ itself, where $\la v_0v_0^*\ra$ is not defined.

These limits are not elements of $\Pro O(\Rmn)$. Instead, they lie on the closure of $\Pro O(\Rmn)$ in $\Pro End(\Rmn)$. In fact, it can be shown (cf \cite{fu18}) that the closure of $\Pro O(\Rmn)$ in $\Pro End(\Rmn)$ is compact and precisely the union of $\Pro O(\Rmn)$ and the set $\{\la vw^*\ra ~|~v,w\in\Li^{n+1}\}\subset \Pro End(\Rmn)$. In contrast, $O^+(\Rmn)$ is closed in $End(\Rmn)$, but not compact. The limits \eqref{eq:lim_sim_min} and \eqref{eq:lim_sim_deg} can thus not be computed in $O^+(\Rmn)$, which demonstrates the advantage of the projective model.

\subsection{Primitives of pole forms}\label{section:genoneform}
In this section, we define pole forms and relate the limiting behaviour of their primitives at $0$ to that of pure pole forms.
\begin{definition}
A $\pro\ort(\Rmn)$-valued 1-form $\pf$ on $(0,b)$ is a \textit{pole form} if $\pf-\ppf$ is bounded with respect to the Euclidean norm $|\cdot|$ for a pure pole form $\ppf$ on $(0,b)$. 
\end{definition}
For a pole form $\pf$, the pure pole form $\ppf$ for which $\pf-\ppf$ is bounded is clearly unique. Therefore, we may define
\begin{definition}
Let $\pf$ be a pole form on $(0,b)$ with $\pf-\ppf$ bounded for the pure pole form $\ppf$. Then $\pf$ is called \textit{Minkowski, spacelike} or \textit{degenerate} according to whether $\ppf$ has that property. For Minkowski $\pf$, denote by $\zeta$ the positive eigenvalue of $v\wedge w$. If $\pf$ is Minkowski with $\zeta<1$, spacelike or degenerate, we say that $\pf$ is \textit{of the first kind}. Otherwise it is \textit{of the second kind}.
\end{definition}
We remark that these properties of a pole form are invariant under those diffeomorphisms of $(0,b)$ to $(\ti a,\ti b)$ which extend to diffeomorphisms of $(-\gep,b)$ to $(\ti a-\ti\gep,\ti b)$ for some $\gep,\ti\gep>0$ (cf \cite{fu18}).

Different techniques are necessary to investigate the limiting behaviour of primitives of pole forms of the first and of the second kind. We start with those of the first kind. 

\begin{lemma}\label{lemma:integrability_of_gauge_trafo1form}
Let $\ppf$ be a pure pole form of the first kind on $(0,b)$ and let $\chi$ be a bounded, continuous map from $(0,b)$ to $\pro\ort(\Rmn)$. Then the family $\left(\gG_{p}(\ppf)\chi\gG^{~p}(\ppf)\right)_{p\in(0,b)}$ of maps from $(0,b)$ to $\pro\ort(\Rmn)$ satisfies
$$~~~~~~\forall p\in(0,b):~~~~|\gG_{p}(\ppf)\chi\gG^{~p}(\ppf)|< \cB_{p},$$
where $(\cB_{p})_{p\in(0,b)}$ is a family of integrable functions on $(0,b)$ that satisfies
\begin{equation}
\forall p\in(0,b):~~~~\ltz t~\cB_{p}(t)=0.
\label{eq:prod_bound_withint}
\end{equation}
Moreover, if $\ppf$ is degenerate or spacelike, then
\begin{equation}
\lim_{p\rightarrow 0}\int_{0}^p\cB_{p}(t)\dd t=0.
\label{eq:double_limit_intbound}
\end{equation}

\end{lemma}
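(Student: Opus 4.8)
The plan is to pass to orthogonal lifts and to read the quantity $\gG_p(\ppf)\chi\gG^{~p}(\ppf)$ as an adjoint action whose growth can be controlled eigenspace by eigenspace. First I would note that, by \eqref{eq:propsgG2}, at each $t\in(0,b)$ one has $\gG^{~p}(\ppf)=\gG_t^{~p}(\ppf)=(\gG_p^{~t}(\ppf))^{-1}$, so the value at $t$ is the conjugate $\gG_p^{~t}(\ppf)\,\chi(t)\,(\gG_p^{~t}(\ppf))^{-1}$. Using \eqref{eq:equality_of_prims} and the fact that the norm $|\cdot|$ on $\pro\ort(\Rmn)$ is the push-forward of $|\cdot|$ on $\ort(\Rmn)$ under \eqref{eq:iso_liealg}, this has the same norm as $\gG_p^{~t}(\Ppf)\,X(t)\,\gG_t^{~p}(\Ppf)$, where $X$ is the $\ort(\Rmn)$-valued orthogonal lift of $\chi$, bounded by some $M:=\sup_{(0,b)}|X|<\infty$. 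The whole problem thus reduces to bounding the conjugation of a \emph{skew-adjoint} $X$ by the explicit exponentials $\gG_p^{~t}(\Ppf)=e^{-\ln(t/p)\,v\wedge w}$ of \eqref{eq:sim_paralltr}, and I would treat the three types separately.

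For the Minkowski case I would use \eqref{eq:expo_mink_purepoleform} to write $\gG_p^{~t}(\Ppf)=P_0+(t/p)^{-\zeta}P_++(t/p)^{\zeta}P_-$ with the mutually orthogonal idempotents $P_+:=\tfrac{v_+v_-^*}{\zeta}$, $P_-:=\tfrac{v_-v_+^*}{\zeta}$ and $P_0:=id-P_+-P_-$ (using that $v_\pm$ are the null eigenvectors with $\ipl v_+,v_-\ipr=\zeta$), so that $\gG_t^{~p}(\Ppf)$ is the same expression with the two exponents exchanged. Expanding $\gG_p^{~t}(\Ppf)X\gG_t^{~p}(\Ppf)=\sum_{a,b}c_a\,c_b'\,P_aXP_b$ produces coefficients $c_ac_b'$ that are powers of $(t/p)$ in the range $\{-2\zeta,-\zeta,0,\zeta,2\zeta\}$. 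The single term carrying the divergent exponent $-2\zeta$ is $P_+XP_-$, and the crucial observation is that it vanishes: since $P_+XP_-=\zeta^{-2}\,\ipl Xv_-,v_-\ipr\,v_+v_+^*$ and $X$ is skew with respect to $\ipl\cdot,\cdot\ipr$, one has $\ipl Xv_-,v_-\ipr=0$. Hence the slowest-decaying surviving coefficient is $(t/p)^{-\zeta}$, and one obtains a bound of the form $\cB_p(t):=CM\big((t/p)^{-\zeta}+1\big)$ with a constant $C$ depending only on the projections. Because $\pf$ is of the first kind we have $\zeta<1$, so $t^{-\zeta}$ is integrable near $0$ and $t\,\cB_p(t)\lesssim t^{1-\zeta}\to 0$, giving \eqref{eq:prod_bound_withint}.

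In the spacelike case $\gG_p^{~t}(\Ppf)$ stays in the compact rotation group $O(\R^2)\subset O^+(\Rmn)$, so the conjugation is uniformly bounded and I can take the constant $\cB_p\equiv CM$. In the degenerate case $v\wedge w$ is nilpotent, so $e^{-s\,v\wedge w}$ and the conjugation $e^{-s\,\mathrm{ad}_{v\wedge w}}X$ are polynomials of bounded degree in $s=\ln(t/p)$; here I would take $\cB_p(t):=M\,P\big(|\ln(t/p)|\big)$ with $P$ the resulting fixed polynomial. In both cases $\cB_p$ is integrable on $(0,b)$ and $t\,\cB_p(t)\to 0$. For the additional conclusion \eqref{eq:double_limit_intbound}, the substitution $u=t/p$ turns $\int_0^p\cB_p(t)\,\dd t$ into $p$ times a fixed finite integral ($\int_0^1 CM\,\dd u$ in the spacelike case, $\int_0^1 M\,P(|\ln u|)\,\dd u$ in the degenerate one), hence a constant multiple of $p$ that tends to $0$ as $p\to0$; the Minkowski type is not needed here. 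I expect the skew-symmetry cancellation of the $(t/p)^{-2\zeta}$ term in the Minkowski case to be the only delicate point, since it is exactly what makes the condition $\zeta<1$ (rather than $\zeta<\tfrac12$) sufficient for integrability.
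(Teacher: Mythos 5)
Your proposal is correct and follows the paper's overall strategy --- pass to orthogonal lifts, exploit the explicit exponential \eqref{eq:sim_paralltr}, and treat the three types separately --- but your execution differs at each of the three local steps, in ways worth recording. In the Minkowski case the paper never expands the conjugation in the projectors $P_0,P_\pm$ and so never meets the dangerous $(t/p)^{\mp 2\zeta}$ coefficients: it instead decomposes $\chi$ itself along the $\mathrm{ad}_{v\wedge w}$-invariant splitting $\ort(\Rmn)=\la v_+\wedge v_-\ra\oplus\big(\la v_+\ra\wedge\la v_+,v_-\ra^\perp\big)\oplus\big(\la v_-\ra\wedge\la v_+,v_-\ra^\perp\big)\oplus\gL^2\la v_+,v_-\ra^\perp$, on which conjugation acts with coefficients $1$, $(t/p)^{-\zeta}$, $(t/p)^{\zeta}$, $1$ only. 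Your computation $P_+XP_-=\zeta^{-2}\ipl Xv_-,v_-\ipr\,v_+v_+^*=0$ by skew-symmetry proves exactly the structural fact that the paper's decomposition encodes (namely that $\gL^2\la v_+,v_-\ra$ is one-dimensional and $\mathrm{ad}$-invariant, so no $\pm 2\zeta$ eigenvalue occurs in the adjoint representation), and it has the pedagogical merit of making visible why $\zeta<1$, rather than $\zeta<\tfrac12$, is the right threshold. In the degenerate case the paper is cruder than you: it bounds the conjugation by submultiplicativity, $|\gG_p^{~t}(\Ppf)|^2\,|\chi(t)|\le C\,\bd(p,t)^2$ with $\bd$ as in \eqref{eq:defi_bound_ell}, a degree-four polynomial in $\ln(t/p)$, where you invoke nilpotency of $\mathrm{ad}_{v\wedge w}$; both routes give a fixed polynomial in $|\ln(t/p)|$ and are interchangeable. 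For \eqref{eq:double_limit_intbound} your scaling substitution $u=t/p$, turning $\int_0^p\cB_p$ into a constant multiple of $p$, is cleaner than the paper's argument via the antiderivatives $x(\ln x)^k$ of powers of the logarithm, and it applies verbatim to the paper's bounds as well.

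One small repair is needed in your Minkowski bound: $\cB_p(t)=CM\big((t/p)^{-\zeta}+1\big)$ with a $p$-independent constant $C$ fails for $t>p$, where the surviving coefficient $(t/p)^{\zeta}$ exceeds it once $t/p$ is large, and the lemma requires a bound on all of $(0,b)$. Since the family $(\cB_p)$ is allowed to depend on $p$, you can absorb $(b/p)^{\zeta}$ into a $p$-dependent constant, or simply retain the term as the paper does and take $\cB_p(t)=C\big((t/p)^{-\zeta}+(t/p)^{\zeta}\big)$ as in \eqref{eq:defi_boundfunc_mink}; integrability and \eqref{eq:prod_bound_withint} persist since $\zeta<1$. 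This is a cosmetic fix, not a gap in the argument.
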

\begin{proof}
Let $\Ppf$ the the orthogonal lift of $\ppf$. If $\ppf$ is degenerate, it follows from the submultiplicativity of $|\cdot|$, Prop \ref{prop:boundednonzerosimplesections} and the assumed boundedness of $\chi$ that there is a $C\in\R$ such that
$$\forall p,t\in(0,b):~~~~|\gG_{p}^{~t}(\ppf)\chi(t)\gG_t^{~p}(\ppf)|\leq |\gG_{p}^{~t}(\Ppf)|^2 |\chi(t)|\leq C~ \bd(p,t)^2,$$
where $\bd(p,t)$ is as in \eqref{eq:defi_bound_ell}. Now set $\cB_{p}(t)=C \bd(p,t)^2$. For all $i\in\mathbb N$, by l'H\^opital's rule $(\ln x)^ix$ converges to zero as $x\rightarrow 0$ such that \eqref{eq:prod_bound_withint} holds. Moreover, for any $i\in\mathbb N$, the integral of $(\ln x)^i$ is a linear combination of terms of the form $x(\ln x)^k$ with $0\leq k\leq i$. Again, since all terms $x(\ln x)^k$ converge to zero as $x\rightarrow 0$, it follows that for all $p\in(0,b)$ the function $\cB_{p}$ is integrable and that \eqref{eq:double_limit_intbound} holds.

If $\ppf$ is spacelike, again by Prop \ref{prop:boundednonzerosimplesections}, $\gG_{p}(\Ppf)$ is bounded on $(0,b)$. Therefore $\gG_{p}(\ppf)\chi\gG^{~p}(\ppf)$ is bounded on $(0,b)$. Thus, we can choose $\cB_{p}$ to be the same constant for all $p\in(0,b)$. Clearly, a constant $B_{p}$ is integrable and satisfies \eqref{eq:prod_bound_withint} and \eqref{eq:double_limit_intbound}.

Let finally $\ppf$ be Minkowski with positive eigenvalue $\zeta<1$. Denote by $v_\pm$ the eigenvectors of $v\wedge w$ with eigenvalues $\pm\zeta$. According to the decomposition 
$$\ort(\Rmn)=\gL^2\Rmn=\la v_+\wedge v_-\ra\oplus\big(\la v_+\ra \wedge \la v_+,v_-\ra^\perp\big)\oplus\big(\la v_-\ra \wedge \la v_+,v_-\ra^\perp\big)\oplus \gL^2 \la v_+,v_-\ra^\perp,$$
 write $\chi=\chi_{+-}+\chi_{+\perp}+\chi_{-\perp}+\chi_{\perp\perp}+\R~id$. Using \eqref{eq:expo_mink_purepoleform}, $\gG_{p}(\ppf)\chi\gG^{~p}(\ppf)$ evaluated at $t$ reads
$$\gG_{p}^{~t}(\ppf)\chi(t)\gG_t^{~p}(\ppf)=\chi_{+-}(t)+\left(\frac{t}{p}\right)^{-\zeta}\chi_{+\perp}(t)+\left(\frac{t}{p}\right)^{\zeta}\chi_{-\perp}(t)+\chi_{\perp\perp}(t)+\R~id.$$
Thus, there is a constant $C$ such that
\begin{equation}
\forall p,t\in(0,b):~~~~\left|\gG_{p}^{~t}(\ppf)\chi(t)\gG_t^{~p}(\ppf)\right|\leq C\left(\left(\frac{t}{p}\right)^{-\zeta}+\left(\frac{t}{p}\right)^{\zeta}\right)=:\cB_{p}(t).
\label{eq:defi_boundfunc_mink}
\end{equation}
By assumption $\zeta<1$. Therefore, for all $p\in(0,b)$, the function $\cB_{p}$ is integrable over $(0,b)$ and the product $t\,\cB_{p}(t)$ converges to zero as $t$ tends towards $b$.\qed
\end{proof}
We remark that the bounding functions $\cB_{p}$ that we defined in \eqref{eq:defi_boundfunc_mink} for the Minkowski case do not satisfy \eqref{eq:double_limit_intbound}.

This Lemma can now be used to prove that a primitive of a pole form of the first kind factorizes into a continuous map from $[0,b)$ to $\Pro O(\Rmn)$ and the primitive of a pure pole form, whose limiting behaviour at $0$ we know from Prop \ref{prop:boundednonzerosimplesections}.

\begin{corollary}\label{coro:integrable_gauge_trafo}
Let $\pf$ be a pole form of the first kind so that $\pf-\ppf$ is bounded for the pure pole form $\xi$. For all $p\in (0,b)$, define the gauge transformed 1-form
\begin{equation}
\Gtr \ppf {p} \pf:=\gG_{p}(\ppf) \, \pf \, \gG^{~p}(\ppf)-\ppf.
\label{eq:gaugetrafo_poleformfirstkind}
\end{equation}
Then the primitives $\gG_{p}(\Gtr \ppf {p} \pf)$ have limits in $\Pro O(\Rmn)$ at $0$ and
\begin{equation}
\gG_{p}(\pf)=\gG_{p}(\Gtr \ppf {p} \pf) \, \gG_{p}(\ppf).
\label{eq:gaugetrafo_for_int_xi}
\end{equation}
Moreover, if $\ppf$ is spacelike or degenerate, 
\begin{equation}
\lim_{p\rightarrow 0}\gG_{p}^{~0}(\Gtr \ppf {p} \pf)=id.
\label{eq:limit_of_gammaoftiomega}
\end{equation}
\end{corollary}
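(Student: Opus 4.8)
The plan is to recognise $\Gtr\ppf p\pf$ as a genuine gauge transform, read off the factorisation \eqref{eq:gaugetrafo_for_int_xi} from the general gauge rule, and then turn the integrable bound of Lemma \ref{lemma:integrability_of_gauge_trafo1form} into convergence by a Gronwall/Cauchy argument applied to the linear integral equation \eqref{eq:integral_eq}.

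First I would exploit that $\ppf$ takes values in the line through $v\wedge w$ (modulo the centre $\R\,id$), while its primitive $\gG_p^{~t}(\Ppf)=e^{-\ln(t/p)\,v\wedge w}$ is an exponential of $v\wedge w$; hence $\ppf$ is fixed by its own parallel transport, $\gG_p(\ppf)\,\ppf\,\gG^{~p}(\ppf)=\ppf$. It follows that
$$\Gtr\ppf p\pf=\gG_p(\ppf)\,\pf\,\gG^{~p}(\ppf)-\ppf=\gG_p(\ppf)\,(\pf-\ppf)\,\gG^{~p}(\ppf),$$
and, with $g:=\gG^{~p}(\ppf)$ (so that $g^{-1}=\gG_p(\ppf)$ and $g^{-1}\dd g=-\gG_p(\ppf)\,\ppf\,\gG^{~p}(\ppf)=-\ppf$), that $\Gtr\ppf p\pf=\gatr{g}{\pf}$ is exactly the gauge transform of $\pf$ by $g$. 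Since $g(p)=\gG_p^{~p}(\ppf)=id$, the transformation rule \eqref{eq:gauge_trafo_beha} collapses to precisely \eqref{eq:gaugetrafo_for_int_xi}; equivalently, one checks that $t\mapsto\gG_p^{~t}(\Gtr\ppf p\pf)\,\gG_p^{~t}(\ppf)$ solves \eqref{eq:propsgG1} for $\pf$ and appeals to uniqueness of primitives.

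For the existence of the limit I would drop to the orthogonal lift $\ti\Pf$ of $\ti\pf:=\Gtr\ppf p\pf$, so that $\gG_p^{~t}(\ti\pf)=\la\gG_p^{~t}(\ti\Pf)\ra$ by \eqref{eq:equality_of_prims} and the linear equation \eqref{eq:integral_eq} becomes available in $\R^{(n+2)^2}$. Applying Lemma \ref{lemma:integrability_of_gauge_trafo1form} to $\chi:=\pf-\ppf$ (continuous, and bounded by the definition of a pole form), the coefficient of $\ti\Pf$ is dominated by the integrable function $\cB_p$. Inserting this into \eqref{eq:integral_eq} and running Gronwall's inequality yields the uniform bound $|\gG_p^{~t}(\ti\Pf)|\le\exp(\int_0^p\cB_p(\tau)\dd\tau)$ on $(0,p]$; the increment $\gG_p^{~t}(\ti\Pf)-\gG_p^{~s}(\ti\Pf)$ is then controlled by $\exp(\int_0^p\cB_p(\tau)\dd\tau)\int_s^t\cB_p(\tau)\dd\tau$, which tends to $0$ as $s,t\to 0$ because $\cB_p$ is integrable near $0$. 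Hence $\gG_p^{~t}(\ti\Pf)$ is Cauchy and converges in $\R^{(n+2)^2}$; as $O^+(\Rmn)$ is closed in $End(\Rmn)$ and does not contain $0$, the limit lies in $O^+(\Rmn)$, and taking spans gives the desired limit of $\gG_p^{~t}(\ti\pf)$ in $\Pro O(\Rmn)$.

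Finally, for \eqref{eq:limit_of_gammaoftiomega} I would evaluate the integral equation at the endpoint, $\gG_p^{~0}(\ti\Pf)=id-\int_0^p\gG_p^{~\tau}(\ti\Pf)\,\ti\Pf(\tau)\,\dd\tau$, whence $|\gG_p^{~0}(\ti\Pf)-id|\le\exp(\int_0^p\cB_p(\tau)\dd\tau)\int_0^p\cB_p(\tau)\dd\tau$. In the spacelike and degenerate cases Lemma \ref{lemma:integrability_of_gauge_trafo1form} supplies \eqref{eq:double_limit_intbound}, i.e. $\int_0^p\cB_p(\tau)\dd\tau\to 0$ as $p\to 0$; therefore $\gG_p^{~0}(\ti\Pf)\to id$ in $O^+(\Rmn)$ and so $\gG_p^{~0}(\ti\pf)\to id$ in $\Pro O(\Rmn)$. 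I expect the main obstacle to be the passage between the orthogonal-lift and projective pictures: the lifted primitive lives in the noncompact group $O^+(\Rmn)$ and could a priori escape to infinity, and it is exactly the integrability of $\cB_p$ — guaranteed by the first-kind hypothesis ($\zeta<1$ in the Minkowski case, or the spacelike/degenerate structure) — that, through Gronwall, keeps it bounded and convergent, after which closedness of $O^+(\Rmn)$ secures a limit in $\Pro O(\Rmn)$ rather than merely in $\Pro End(\Rmn)$.
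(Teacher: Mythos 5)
Your proposal is correct and takes essentially the same approach as the paper: you exploit that $\ppf$ commutes with its own primitives to rewrite $\Gtr \ppf p \pf=\gG_{p}(\ppf)(\pf-\ppf)\gG^{~p}(\ppf)$, identify it as the gauge transform of $\pf$ by $\gG^{~p}(\ppf)$ so that \eqref{eq:gauge_trafo_beha} with $g(p)=id$ gives \eqref{eq:gaugetrafo_for_int_xi}, apply Lemma \ref{lemma:integrability_of_gauge_trafo1form} with $\chi=\pf-\ppf$ to dominate the transformed form by the integrable $\cB_p$, and obtain \eqref{eq:limit_of_gammaoftiomega} from the endpoint estimate together with \eqref{eq:double_limit_intbound}. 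The only deviation is cosmetic: where the paper cites \cite{dol79} for continuity at $0$ of primitives with integrably bounded coefficients, you prove that fact inline via the integral equation \eqref{eq:integral_eq}, Gronwall and a Cauchy criterion, together with the closedness of $O^+(\Rmn)$ in $End(\Rmn)$ — a faithful, self-contained unpacking of the cited result.
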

\begin{proof}
Since $\ppf$ takes values in a fixed 1-dimensional subalgebra, the orthogonal lift $\Gtr \Ppf p \Pf$ of $\Gtr \ppf p \pf$ can be written as
\begin{equation}
\Gtr \Ppf p \Pf=\gG_{p}(\Ppf)\,\Pf\,\gG^{~p}\,(\Ppf)-\Ppf=\gG_{p}(\Ppf)\,(\Pf-\Ppf)\,\gG^{~p}(\Ppf).
\label{eq:bounded_1form_gatro}
\end{equation}
By assumption, $\pf-\ppf$ and hence $\Pf-\Ppf$ is bounded. Thus, by Lemma \ref{lemma:integrability_of_gauge_trafo1form} and \eqref{eq:bounded_1form_gatro}, $\Gtr \Ppf {p} \Pf$ is bounded by an integrable function $\cB_p$. By \cite[Ch 1.8]{dol79}, the $O^+(\Rmn)$-valued primitive $\gG_{p}(\Gtr \Ppf {p} \Pf)$ is continuous on $[0,b)$ and, in particular, has a limit in $O^+(\Rmn)$ at $0$. Thus, by \eqref{eq:equality_of_prims}, $\gG_{p}(\Gtr \ppf {p} \pf)$ has a limit in $\Pro O(\Rmn)$ at $0$. The relation \eqref{eq:gaugetrafo_for_int_xi} follows from \eqref{eq:gauge_trafo_beha} because $\Gtr \ppf {p} \pf$ is a gauge transform of $\pf$ by the map $\gG^{~p}(\xi)$. The limit \eqref{eq:limit_of_gammaoftiomega} follows from
$$\lim_{p\rightarrow 0}\left|\gG_p^{~0}(\Gtr \Ppf p \Pf)-id\right|\leq \lim_{p\rightarrow 0}\left|e^{\int_{0}^p|\Gtr \Ppf {p} \Pf|}-1\right| \leq \lim_{p\rightarrow 0}\left|e^{\int_0^p \cB_p(t)\dd t}-1\right|=0,$$
where the first inequality can be derived from the integral equation \eqref{eq:integral_eq} and in the last step we used \eqref{eq:double_limit_intbound}. \qed
\end{proof}
From the factorization \eqref{eq:gaugetrafo_for_int_xi} and Prop \ref{prop:boundednonzerosimplesections}, we conclude that the primitives of spacelike pole forms do not have limits at the singularity, while those of degenerate and Minkowski pole forms of the first kind do have limits. The following Corollary deals with these limits for the degenerate case. Below, in Lemma \ref{lemma:maintool_bound} and Prop \ref{prop:maintool_sing}, we then derive an analogous result for Minkowski pole forms of the first and second kind. 

\begin{corollary}\label{cor:maintool_sing_firstkind}
Let $\pf$ be a degenerate pole form on $(0,b)$ with $\pf-\ppf$ bounded for the pure pole form $\ppf$. Choose $v_0\in\la v, w\ra\cap\Li^{n+1}$. Then there is a continuous map $k:[0,b)\rightarrow \Li^{n+1}$ which satisfies
\begin{equation}
\forall p,t\in (0,b):~~~~\la k(t)\ra=\gG_t^{~p}(\pf)\la k(p)\ra,~~~~~~~\la k(0)\ra=\la v_0\ra,~~~~
\label{eq:props_k_deg}
\end{equation}
and is such that
\begin{equation}
~~~~~~~~~\forall p\in (0,b):~~~~\ltz \gG_p^{~t}(\pf)=\la k(p)v_0^*\ra,~~~~~\ltz \gG_t^{~p}(\pf)=\la v_0 k(p)^*\ra.
\label{eq:maintool_sing_firstkind}
\end{equation}
\end{corollary}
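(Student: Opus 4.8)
The plan is to exploit the factorization from Cor~\ref{coro:integrable_gauge_trafo} together with the explicit degenerate limit from Prop~\ref{prop:boundednonzerosimplesections}. By \eqref{eq:gaugetrafo_for_int_xi} we have $\gG_{p}(\pf)=\gG_{p}(\Gtr \ppf {p} \pf)\,\gG_{p}(\ppf)$, where $\gG_{p}(\ppf)$ is the primitive of the pure pole form whose limiting behaviour we control exactly. For the degenerate case, \eqref{eq:lim_sim_deg} gives $\ltz \frac{1}{\bd(p,t)}\gG_p^{~t}(\Ppf)=-\tfrac{\|\ti w\|^2}{2}v_0v_0^*$, and since rescaling a representative does not change the projective class, $\ltz\gG_p^{~t}(\ppf)=\la v_0v_0^*\ra$. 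The factor $\gG_{p}(\Gtr \ppf {p} \pf)$ is, by Cor~\ref{coro:integrable_gauge_trafo}, continuous on $[0,b)$ and hence has a genuine limit $L_p:=\gG_p^{~0}(\Gtr \ppf {p} \pf)\in\Pro O(\Rmn)$ at $0$. The idea is to \emph{define} $k$ by using this limit to transport $v_0$: set $k(p):=$ a light cone representative of $L_p\la v_0\ra$, normalized continuously in $p$.

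First I would make this precise. Choose lifts so that $\gG_p(\Gtr\Ppf p \Pf)$ is the $O^+(\Rmn)$-valued primitive; by Cor~\ref{coro:integrable_gauge_trafo} it extends continuously to $t=0$ with value $\gG_p^{~0}(\Gtr\Ppf p\Pf)\in O^+(\Rmn)$, depending continuously on $p$ as well (the dependence on $p$ is smooth away from $0$ by \eqref{eq:propsgG2}, and the double-limit estimate \eqref{eq:double_limit_intbound} controls it at $0$). Define $k(p):=\gG_p^{~0}(\Gtr\Ppf p\Pf)\,v_0\in\Li^{n+1}$ for $p\in(0,b)$ and $k(0):=v_0$. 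Continuity of $k$ on $(0,b)$ is immediate; continuity at $0$ follows from \eqref{eq:limit_of_gammaoftiomega}, which gives $\lim_{p\to0}\gG_p^{~0}(\Gtr\ppf p\pf)=id$ in the degenerate case, whence $\lim_{p\to0}\la k(p)\ra=\la v_0\ra$ and we recover $\la k(0)\ra=\la v_0\ra$.

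Next I would verify the parallel-transport property \eqref{eq:props_k_deg}. Using \eqref{eq:gaugetrafo_for_int_xi} and the cocycle relation \eqref{eq:propsgG2} for the gauge-transformed primitive, one computes $\gG_t^{~p}(\pf)$ in terms of $\gG_t^{~p}(\Gtr\ppf p\pf)$ and $\gG_t^{~p}(\ppf)$; passing $t\to0$ and matching representatives should show that $k$ satisfies $\la k(t)\ra=\gG_t^{~p}(\pf)\la k(p)\ra$, i.e. $\la k\ra$ is the curve obtained by transporting $\la v_0\ra$. The final identities \eqref{eq:maintool_sing_firstkind} then follow by combining the factorization with the pure-pole limit: since $\gG_p^{~t}(\ppf)\to\la v_0v_0^*\ra$ and $\gG_p^{~t}(\Gtr\ppf p\pf)\to L_p$ with $L_p v_0=k(p)$, the product of projective limits gives
\[
\ltz\gG_p^{~t}(\pf)=\la L_p\ra\la v_0v_0^*\ra=\la L_p v_0\,v_0^*\ra=\la k(p)v_0^*\ra,
\]
and the adjoint computation, using $|A^*|=|A|$ and $(v_0v_0^*)^*=v_0v_0^*$ together with $\gG_t^{~p}(\pf)=\gG_p^{~t}(\pf)^{-1}$, yields $\ltz\gG_t^{~p}(\pf)=\la v_0k(p)^*\ra$.

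The main obstacle I expect is the bookkeeping of \emph{projective} limits of products when the factors individually degenerate to rank-one maps: one must check that $\la L_p\ra\la v_0v_0^*\ra$ really equals $\la L_pv_0\,v_0^*\ra$ rather than collapsing, which requires $L_pv_0\neq0$. Since $L_p\in O^+(\Rmn)$ is invertible and $v_0\neq0$, this holds, but the argument must be made carefully at the level of representatives, controlling the rescaling factor $\bd(p,t)^{-1}$ from \eqref{eq:lim_sim_deg} so that it does not interfere with the continuous factor. The delicate point is ensuring the limit of the product of two convergent projective sequences is the product of the limits precisely when the limiting operators are non-degenerate on the relevant vectors; away from that, the lift-independent formalism of Sect~\ref{section:defi_trafos} does the required work.
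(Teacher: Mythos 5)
Your proposal is correct and follows essentially the same route as the paper's own proof: both factor $\gG_p(\pf)$ via Cor \ref{coro:integrable_gauge_trafo}, use the degenerate limit \eqref{eq:lim_sim_deg} to obtain $\la v_0v_0^*\ra$ projectively, define $k(p)$ by applying the continuous factor $\gG_p^{~0}(\Gtr\ppf p\pf)$ to $v_0$ with continuity at $0$ from \eqref{eq:limit_of_gammaoftiomega}, derive the transport property \eqref{eq:props_k_deg} from the cocycle relation \eqref{eq:propsgG2}, and recover the second limit in \eqref{eq:maintool_sing_firstkind} from the first via continuity of the adjoint. Your explicit check that $L_pv_0\neq 0$ (so the product of projective limits does not collapse) is a point the paper handles only implicitly, but it is the same argument.
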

\begin{proof}
For $a\in \Pro End(\Rmn)$ choose $A\in End(\Rmn)$ such that $a=\la A\ra$. The adjoint $A^*$ with respect to the Minkowski inner product yields a well defined element $a^*:=\la A^*\ra\in \Pro End(\Rmn)$, which is independent of the chosen $A\in a$. If $a\in \Pro O(\Rmn)$, then $a^*=a^{-1}$. Now let $a:(0,b)\rightarrow \Pro O(\Rmn)$ be continuous with limit in $\Pro End(\Rmn)$ at $0$. Then, since the map $a\mapsto a^*$ is continuous, we have
\begin{equation}
\ltz a(t)^{-1}=\ltz a(t)^*=(\ltz a(t))^{*}.
\label{eq:limit_of_adjoint}
\end{equation}
Since $\gG_p^{~t}(\pf)=\left(\gG_t^{~p}(\pf)\right)^{-1}$, one limit in \eqref{eq:maintool_sing_firstkind} follows from the other due to \eqref{eq:limit_of_adjoint}. Now use Cor \ref{coro:integrable_gauge_trafo} and Prop \ref{prop:boundednonzerosimplesections} to find that
$$\forall p\in(0,b):~~~~\ltz \gG_{p}^{~t}(\go)=\gG_{p}^{~0}(\Gtr \xi {p} \go) \, \la v_0v_0^*\ra.$$
Choose $k(p)$ to be a continuous lift of $\gG_p^{~0}(\Gtr \xi {p} \go)\,\la v_0\ra$. Due to \eqref{eq:limit_of_gammaoftiomega}, $k(0)\in \la v_0\ra$. The other relation in \eqref{eq:props_k_deg} follows from \eqref{eq:maintool_sing_firstkind} and $\gG_p^{~q}(\pf)=\gG_p^{~t}(\pf)\gG_t^{~q}(\pf)$.\qed
\end{proof}

Lemma \ref{lemma:integrability_of_gauge_trafo1form} and Cor \ref{coro:integrable_gauge_trafo} do not hold for pole forms of the second kind. In particular, the gauge transforms \eqref{eq:gaugetrafo_poleformfirstkind} are in general not integrable and although the factorization \eqref{eq:gaugetrafo_for_int_xi} also exists in that case, it seems to be of little use because we do not know whether the first factor, $\gG_{p}(\Gtr \ppf {p} \pf)$, has a limit at $0$. Thus, we pursue a different strategy. The following Lemma and Proposition are based on ideas from \cite{lev46}.
\begin{lemma}\label{lemma:maintool_bound}
Let $\Pf$ be the orthogonal lift of a Minkowski pole form $\pf$ and $\ppf$ the pure pole form such that $\pf-\ppf$ is bounded. Denote by $\zeta$ the positive eigenvalue of $v\wedge w$. Then there is a $B\in\R$ and a $\ti b\in (0,b)$ such that
	\begin{equation}
	\forall p\in (0,\ti b)~\forall t\in(0,p]:~~~~~\left|\left(\frac{t}{p}\right)^\zeta \gG_{p}^{~t}(\Pf)\right|=\left|\left(\frac{t}{p}\right)^\zeta \gG^{~p}_t(\Pf)\right|<2B.
	\label{eq:upbound_ex_mink_pex}
	\end{equation}
\end{lemma}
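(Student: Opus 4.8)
The plan is to treat $\Pf$ as a perturbation of its pure pole form $\Ppf$ and to run a Gronwall estimate in which the dominant exponent $\zeta$ has been factored out from the outset. The equality of the two norms in \eqref{eq:upbound_ex_mink_pex} is immediate: since $\gG_p^{~t}(\Pf)\in O^+(\Rmn)$ we have $\gG_t^{~p}(\Pf)=\gG_p^{~t}(\Pf)^{-1}=\gG_p^{~t}(\Pf)^*$, and because $|A^*|=|A|$ while the scalar $(t/p)^\zeta$ is real, $|(t/p)^\zeta\gG_t^{~p}(\Pf)|=|(t/p)^\zeta\gG_p^{~t}(\Pf)|$. It therefore suffices to bound $F(t):=(t/p)^\zeta\gG_p^{~t}(\Pf)$.

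Writing $\Pf=\Ppf+R$ with orthogonal lift $\Ppf_t=-v\wedge w\,\tfrac{\dd t}{t}$ and $R:=\Pf-\Ppf$ bounded, say $|R|\le\rho$, differentiation of $F$ gives $\dd F=F\,A_0\,\dd t+F\,R\,\dd t$, where $A_0=\tfrac1t(\zeta\,id-v\wedge w)$. The key algebraic fact, read off from \eqref{eq:expo_mink_purepoleform}, is that $\zeta\,id-v\wedge w$ has only nonnegative eigenvalues: it vanishes on $v_+$, equals $2\zeta$ on $v_-$, and equals $\zeta$ on $\la v_+,v_-\ra^\perp$. The unperturbed equation $\dd\Phi=\Phi A_0\,\dd t$ with $\Phi(p)=id$ is solved by $\Phi(t)=(t/p)^\zeta\gG_p^{~t}(\Ppf)$, which by \eqref{eq:bound_sim_min} satisfies $|\Phi(t)|<B$ for all $t\in(0,p]$.

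Variation of parameters then yields the Duhamel identity
\begin{equation*}
F(t)=\Phi(t)+\int_p^t F(\tau)\,R(\tau)\,T(\tau,t)\,\dd\tau,\qquad T(\tau,t):=\Phi(\tau)^{-1}\Phi(t).
\end{equation*}
The crucial observation is that, by \eqref{eq:propsgG2}, the transition operator computes to $T(\tau,t)=(t/\tau)^\zeta\gG_\tau^{~t}(\Ppf)$, which is precisely the quantity controlled by \eqref{eq:bound_sim_min} with base point $\tau$; hence $|T(\tau,t)|<B$ whenever $t\le\tau$. Although $\Phi$ and $\Phi^{-1}$ each diverge as $0$ is approached, their product over the range $t\le\tau\le p$ stays bounded, because all eigenvalues of $\zeta\,id-v\wedge w$ are nonnegative and $t/\tau\le1$ there. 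Taking Euclidean norms and using submultiplicativity, for $t\le p$,
\begin{equation*}
|F(t)|\le B+B\rho\int_t^p|F(\tau)|\,\dd\tau,
\end{equation*}
so Gronwall's inequality gives $|F(t)|\le B\,e^{B\rho(p-t)}\le B\,e^{B\rho p}$. Choosing $\ti b\in(0,b)$ with $e^{B\rho\ti b}<2$ then yields $|F(t)|<2B$ for all $p\in(0,\ti b)$ and $t\in(0,p]$, which is \eqref{eq:upbound_ex_mink_pex}.

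I expect the main obstacle to be exactly the point where crude norm bounds fail: estimating $|\Phi(\tau)^{-1}|$ and $|\Phi(t)|$ separately produces a non-integrable factor $(\tau/p)^{-2\zeta}$ and the argument collapses, and this is unavoidable for pole forms of the second kind ($\zeta\ge1$). The resolution, following the ideas of \cite{lev46}, is never to separate these factors but to keep the transition operator $T(\tau,t)$ intact and to integrate in the direction $\tau\mapsto t\le\tau$, in which the exponential dichotomy works in our favour. Once this is recognised, the nonnegativity of the eigenvalues of $\zeta\,id-v\wedge w$ reduces the estimate to an ordinary Gronwall argument that is insensitive to the size of $\zeta$, which is why the bound holds for Minkowski pole forms of both kinds.
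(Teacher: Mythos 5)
Your proof is correct and takes essentially the same route as the paper: your Duhamel identity with $T(\tau,t)=(t/\tau)^\zeta\gG_\tau^{~t}(\Ppf)$ is exactly the paper's identity \eqref{eq:equationfory2}, and your key point of never splitting the transition operator, so that \eqref{eq:bound_sim_min} with shifted base point supplies the uniform bound $B$, is precisely the paper's as well. The only (immaterial) difference lies in the final step: you close the estimate with Gronwall's inequality, choosing $\ti b$ so that $e^{B\rho\ti b}<2$, whereas the paper reaches the same bound via a first-crossing contradiction argument under the smallness condition $DB\int_0^{\ti b}\dd\tau<\tfrac12$.
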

\begin{proof}
The equality follows from $|A^{-1}|=|A^*|=|A|$ for $A\in O(\Rmn)$. We now prove the inequality. Let $\Ppf$ and $\Pf$ be the orthogonal lifts of $\ppf$ and $\pf$. For $p,t\in(0,b)$, one verifies by differentiation\footnote{or by using the integral equation \eqref{eq:integral_eq} for the 1-form $\Gtr \xi p \Pf$ and the identity $\gG_p(\Gtr \xi p \Pf)=\gG_p(\Pf)\gG^{~p}(\Ppf)$} that $\gG_{p}^{~t}(\Pf)$ satisfies
\begin{equation}
\left(\frac{t}{p}\right)^\zeta\gG_{p}^{~t}(\Pf)=\int^{t}_{p}\left(\frac{\tau}{p}\right)^\zeta\gG_{p}^{~\tau}(\Pf)(\Pf(\tau)-\Ppf(\tau))\left(\frac{t}{\tau}\right)^\zeta\gG_\tau^{~t}(\Ppf)\dd\tau+\left(\frac{t}{p}\right)^\zeta\gG_{p}^{~t}(\Ppf).
\label{eq:equationfory2}
\end{equation}
By assumption, $\pf-\ppf$ and hence $\Pf-\Ppf$ is bounded by some $D\in\R$ w.r.t. $|\cdot|$. By \eqref{eq:bound_sim_min}, $\left(\frac{t}{p}\right)^\zeta\gG_{p}^{~t}(\Ppf)$ is bounded by a constant $B\in\R$ for all $t\in (0,p]$. Therefore,
\begin{equation}
\forall p\in (0,b)~\forall t\in(0,p]:~~~~~\left(\frac{t}{p}\right)^\zeta|\gG_{p}^{~t}(\Pf)|\leq DB\int_{p}^{t} \left(\frac{\tau}{p}\right)^\zeta|\gG_{p}^{~\tau}(\Pf)|\dd\tau+B.
\label{eq:normyt}
\end{equation}
We now prove by contradiction that there is a $\ti b\in(0,b)$ such that
\begin{equation}
\forall p\in(0,\ti b)~\forall t\in (0,p]:~~~\left|\left(\frac{t}{p}\right)^\zeta\gG_{p}^{~t}(\Pf)\right|\neq 2B.
\label{eq:notequal_bound}
\end{equation}
Namely, choose $\ti b\in(0,b)$ sufficiently small such that 
\begin{equation}
DB\int_{0}^{\ti b} \dd\tau<\frac 12.
\label{eq:choiceoft0}
\end{equation}
Suppose there existed a $p\in (0,\ti b)$ and a $t\in(0,p]$ such that $\left|\left(\frac{t}{p}\right)^\zeta\gG_{p}^{~t}(\Pf)\right|=2B$. Denote by $\bar t$ the smallest of those $t$. Then evaluating \eqref{eq:normyt} at $\bar t$ and using \eqref{eq:choiceoft0} yields $2B<\frac 12 2B+B$, a contradiction. Hence, indeed \eqref{eq:notequal_bound} holds. 

To conclude \eqref{eq:upbound_ex_mink_pex}, we note that for all $p\in (0,\ti b)$, the map $t\mapsto \left|\left(\frac{t}{p}\right)^\zeta\gG_{p}^{~t}(\Pf)\right|$ is continuous, has the value $|id|$ at $t=p$ and, by the above, nowhere on $(0,p]$ attains the value $2B\geq 2|id|>|id|$. Thus necessarily \eqref{eq:upbound_ex_mink_pex} holds. \qed
\end{proof}

We can now prove the analogue of Cor \ref{cor:maintool_sing_firstkind} for Minkowski pole forms.
\begin{proposition}\label{prop:maintool_sing}
Let $\Pf$ be the orthogonal lift of a Minkowski pole form $\pf$ on $(0,b)$ with $\pf-\ppf$ bounded for the pure pole form $\ppf$. Denote by $v_\pm$ eigenvectors of $v\wedge w$ with eigenvalues $\pm\zeta$, $\zeta>0$. Then there is a continuous map $k:[0,b)\rightarrow \Li^{n+1}$ which satisfies 
	\begin{equation}
	\forall p\in (0,b):~~~~k(t)=\left(\frac{t}{p}\right)^{-\zeta}\gG^{~p}_t(\Pf)k(p),~~~~~~~k(0)=\frac{v_+}{\ipl v_+,v_-\ipr}~~~~~~~~~~~
	\label{eq:props_of_k}
	\end{equation}
	and is such that
	\begin{equation}
	\forall p\in(0,b):~~~~\lim_{t\rightarrow 0}\left(\frac{t}{p}\right)^{\zeta}\gG^{~t}_{p}(\Pf)= k(p)v_-^*,~~~~~\lim_{t\rightarrow 0}\left(\frac{t}{p}\right)^{\zeta}\gG^{~p}_{t}(\Pf)= v_-k(p)^*.\label{eq:mainlimit_tool_md}
	\end{equation}
	In particular,
	\begin{equation}
	\forall p\in(0,b):~~~~\lim_{t\rightarrow 0}\gG^{~t}_{p}(\pf)=\la k(p)v_-^*\ra,~~~~~~~~~~~\lim_{t\rightarrow 0}\gG^{~p}_{t}(\pf)=\la v_-k(p)^*\ra.~~~~
	\label{eq:mainlimit_tool_proj}
	\end{equation}
\end{proposition}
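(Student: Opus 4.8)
The plan is to pass to the limit $t\to 0$ in the integral equation \eqref{eq:equationfory2} for the rescaled primitive, using the uniform bound of Lemma \ref{lemma:maintool_bound} to justify dominated convergence. Write $Y(t):=\left(\tfrac{t}{p}\right)^\zeta\gG_{p}^{~t}(\Pf)$, so that by Lemma \ref{lemma:maintool_bound} one has $|Y(t)|<2B$ for $p\in(0,\ti b)$ and $t\in(0,p]$, and \eqref{eq:equationfory2} reads
$$Y(t)=\int_{p}^{t}Y(\tau)\,(\Pf(\tau)-\Ppf(\tau))\left(\tfrac{t}{\tau}\right)^\zeta\gG_\tau^{~t}(\Ppf)\,\dd\tau+\left(\tfrac{t}{p}\right)^\zeta\gG_{p}^{~t}(\Ppf).$$

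First I would fix $p\in(0,\ti b)$ and show that $Y(t)$ converges as $t\to 0$. The boundary term tends to $\tfrac{v_+v_-^*}{\ipl v_+,v_-\ipr}$ by \eqref{eq:lim_sim_min}. For the integral I would extend the integrand by zero for $\tau<t$, so that it lives on the fixed interval $(0,p]$. For each fixed $\tau>0$, \eqref{eq:lim_sim_min} with $p$ replaced by $\tau$ gives the pointwise limit $\left(\tfrac{t}{\tau}\right)^\zeta\gG_\tau^{~t}(\Ppf)\to\tfrac{v_+v_-^*}{\ipl v_+,v_-\ipr}$, so the integrand converges to $Y(\tau)(\Pf(\tau)-\Ppf(\tau))\tfrac{v_+v_-^*}{\ipl v_+,v_-\ipr}$. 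For the dominating function I would combine $|Y(\tau)|<2B$ (Lemma \ref{lemma:maintool_bound}), the boundedness $|\Pf-\Ppf|<D$, and the bound $\left|\left(\tfrac{t}{\tau}\right)^\zeta\gG_\tau^{~t}(\Ppf)\right|<B$ for $t\le\tau$ from \eqref{eq:bound_sim_min}, giving a constant dominating function $2B^2D$ on $(0,p]$. Dominated convergence then yields
$$\ltz Y(t)=-\int_{0}^{p}Y(\tau)\,(\Pf(\tau)-\Ppf(\tau))\,\tfrac{v_+v_-^*}{\ipl v_+,v_-\ipr}\,\dd\tau+\tfrac{v_+v_-^*}{\ipl v_+,v_-\ipr}.$$
Since the rank-one factor $v_+v_-^*$ sits on the right of both terms, this limit has the form $k(p)v_-^*$ with $k(p):=\tfrac{v_+}{\ipl v_+,v_-\ipr}-\int_{0}^{p}Y(\tau)(\Pf(\tau)-\Ppf(\tau))\tfrac{v_+}{\ipl v_+,v_-\ipr}\,\dd\tau$, which is the first limit in \eqref{eq:mainlimit_tool_md} on $(0,\ti b)$; as $v_-\neq 0$, the vector $k(p)$ is read off uniquely from $k(p)v_-^*$.

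Next I would establish the asserted properties of $k$. Applying the cocycle relation $\gG_{p_1}^{~t}(\Pf)=\gG_{p_1}^{~p_2}(\Pf)\gG_{p_2}^{~t}(\Pf)$ to the limit gives $k(p_1)=\left(\tfrac{p_2}{p_1}\right)^\zeta\gG_{p_1}^{~p_2}(\Pf)k(p_2)$, which is the transport relation in \eqref{eq:props_of_k}; it simultaneously extends $k$, continuously, to all of $(0,b)$ (the primitives $\gG_{p_1}^{~p_2}(\Pf)$ being continuous) and shows the first limit in \eqref{eq:mainlimit_tool_md} there. For the value at $0$ I would use that the integral defining $k(p)$ runs over the shrinking interval $(0,p]$ with a uniformly bounded integrand, so it tends to $0$ and $k(p)\to\tfrac{v_+}{\ipl v_+,v_-\ipr}$ as $p\to 0$, giving continuity at $0$ and the stated value $k(0)$. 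Since $\gG$ is valued in $O^+(\Rmn)$, which preserves $\Li^{n+1}$, the transport prefactor is a positive scalar, and $k(0)\in\Li^{n+1}$, the map $k$ takes values in $\Li^{n+1}$ throughout.

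Finally, the second limit in \eqref{eq:mainlimit_tool_md} follows by the adjoint argument of Cor \ref{cor:maintool_sing_firstkind}: since $\gG_t^{~p}(\Pf)=(\gG_p^{~t}(\Pf))^{-1}=(\gG_p^{~t}(\Pf))^*$ for the Lorentz transformation $\gG_p^{~t}(\Pf)$ and the real scalar $\left(\tfrac{t}{p}\right)^\zeta$ is unaffected by taking adjoints, continuity of $A\mapsto A^*$ gives $\ltz\left(\tfrac{t}{p}\right)^\zeta\gG_t^{~p}(\Pf)=(k(p)v_-^*)^*=v_-k(p)^*$. The projective statements \eqref{eq:mainlimit_tool_proj} then follow from \eqref{eq:equality_of_prims} with the nowhere-zero rescaling $h=\left(\tfrac{t}{p}\right)^\zeta$ and continuity of the span map $\la\cdot\ra$, using $k(p)v_-^*\neq 0$. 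I expect the dominated-convergence step to be the main obstacle: its whole force rests on the uniform bound of Lemma \ref{lemma:maintool_bound}, and this is exactly what makes the argument work uniformly for Minkowski pole forms of both kinds, where the factorization route of Cor \ref{coro:integrable_gauge_trafo} is unavailable.
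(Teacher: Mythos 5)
Your proposal reproduces the paper's proof almost step for step: you pass to the limit in the integral identity \eqref{eq:equationfory2} after extending the integrand by zero over the fixed interval $(0,p]$ (the paper does exactly this with a Heaviside factor $\Theta(\tau-t)$), you justify dominated convergence by combining the uniform bound $|Y|<2B$ of Lemma \ref{lemma:maintool_bound} with the boundedness of $\Pf-\Ppf$ and \eqref{eq:bound_sim_min}, you arrive at the same integral formula for $k(p)$, you get $k(0)$ and continuity at $0$ from the shrinking domain of integration with an integrand bounded uniformly in $p$, you extend from $(0,\ti b)$ to $(0,b)$ via the cocycle relation \eqref{eq:propsgG2}, and you obtain the second limit in \eqref{eq:mainlimit_tool_md} and the projective statement \eqref{eq:mainlimit_tool_proj} by the adjoint argument together with \eqref{eq:equality_of_prims}. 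All of that is correct and is precisely the paper's route.

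The one genuine gap is your justification that $k$ takes values in $\Li^{n+1}$. You argue: transport by $\gG_t^{~p}(\Pf)$ and a positive scalar preserves $\Li^{n+1}$, and $k(0)\in\Li^{n+1}$, hence $k(p)\in\Li^{n+1}$ throughout. This is a non sequitur: the transport relation \eqref{eq:props_of_k} connects values of $k$ only at points of $(0,b)$; there is no Lorentz transport from $0$, where the rescaled primitives degenerate to the rank-one limit $v_-k(p)^*$, which does not preserve the light cone. Nor does continuity at $0$ help on its own, since $\Li^{n+1}$ is closed but has empty interior, so points with $\|k(p)\|^2\neq 0$ can perfectly well converge to a light cone point. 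The paper closes this step by a direct computation,
\begin{equation*}
v_-\|k(p)\|^2v_-^*=\ltz \left(\frac{t}{p}\right)^{2\zeta}\gG_t^{~p}(\Pf)\,\gG^{~t}_{p}(\Pf)=\ltz \left(\frac{t}{p}\right)^{2\zeta}id=0,
\end{equation*}
which forces $\|k(p)\|^2=0$. Alternatively, your own ingredients admit a one-line repair: since $\gG_t^{~p}(\Pf)$ is a Lorentz transformation, \eqref{eq:props_of_k} gives $\|k(t)\|^2=\left(\frac{t}{p}\right)^{-2\zeta}\|k(p)\|^2$; as $t\rightarrow 0$ the left-hand side tends to $\|k(0)\|^2=0$ by continuity while $\left(\frac{t}{p}\right)^{-2\zeta}\rightarrow\infty$, so necessarily $\|k(p)\|^2=0$. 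With either argument inserted, your proof is complete and coincides in substance with the paper's.
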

\begin{proof}
First, we note that \eqref{eq:mainlimit_tool_proj} follows from \eqref{eq:mainlimit_tool_md} and one limit in \eqref{eq:mainlimit_tool_md} follows from the other because $(\gG_p^{~t}(\Pf))^*=\gG_t^{~p}(\Pf)$ and taking the adjoint is continuous. We prove the first limit in \eqref{eq:mainlimit_tool_md}.

Let $\Pf$ and $\Ppf$ be the orthogonal lifts of $\pf$ and $\ppf$, respectively. Consider again the identity \eqref{eq:equationfory2}. For $t\in (0,p)$ it can be written in the form
\begin{equation}
\left(\frac{t}{p}\right)^{\zeta}\gG_{p}^{~t}(\Pf)=\int_p^{0}M(t,\tau,p)\dd\tau+\left(\frac{t}{p}\right)^{\zeta}\gG_{p}^{~t}(\Ppf),
\label{eq:equationfory}
\end{equation}
where
$$M(t,\tau,p):=\Theta(\tau-t)\left(\frac{\tau}{p}\right)^{\zeta}\gG_{p}^{~\tau}(\Pf)(\Pf(\tau)-\Ppf(\tau))\left(\frac{t}{\tau}\right)^{\zeta}\gG_\tau^{~t}(\Ppf)~~~\text{with}~~~\Theta(x)=\left\{\begin{array}{cl}
1&\text{for}~x\geq 0\\
0&\text{for}~x<0
\end{array}\right.$$
In order to prove \eqref{eq:mainlimit_tool_md}, we wish to take the limit $t\rightarrow 0$ of \eqref{eq:equationfory}. To that end, we convince ourselves, that we may evaluate the limit $t\rightarrow 0$ under the integral in \eqref{eq:equationfory} by applying the dominated convergence theorem. Let first $\ti b$ be as in Lemma \ref{lemma:maintool_bound} and fix $p\in(0,\ti b)$. Let $(t_i)_{i\in\mathbb N}$ be any sequence in $(0,p]$ converging to $0$ and consider the sequence $\left(\tau\mapsto M(t_i,\tau,p)\right)_{i\in\mathbb N}$ of functions from $(0,p]$ to $\R^{(n+2)^2}$. Due to \eqref{eq:lim_sim_min}, this sequence of functions converges pointwise. Moreover, due to Lemma \ref{lemma:maintool_bound}, the assumed boundedness of $\Pf-\Ppf$, and \eqref{eq:bound_sim_min}, the sequence of functions is uniformly bounded by some constant. Thus, the conditions of the dominated convergence theorem are satisfied and we conclude
\begin{align}
\ltz \left(\frac{t}{p}\right)^{\zeta}\gG_{p}^{~t}(\Pf)=&\int_p^{0}\ltz M(t,\tau,p)\dd\tau+\ltz\left(\frac{t}{p}\right)^{\zeta}\gG_{p}^{~t}(\Ppf)\notag\\
=&\left[\int_{p}^0 \left(\frac{\tau}{p}\right)^{\zeta}\gG_{p}^{~\tau}(\Pf)(\Pf(\tau)-\Ppf(\tau))\dd\tau+id\right]\frac{v_+}{\ipl v_+,v_-\ipr}v_-^*=:k(p)v_-^*\label{eq:defi_k},
\end{align}
where we used \eqref{eq:lim_sim_min}. Again due to Lemma \ref{lemma:maintool_bound} and the assumed boundedness of $\Pf-\Ppf$, the integrand in $k$ is bounded uniformly in $p$ and so the integral in $k$ exists for all $p\in(0,\ti b)$ and $\lim_{p\rightarrow 0}k(p)=v_+/\ipl v_+,v_-\ipr$. The first identity in \eqref{eq:props_of_k} also follows from \eqref{eq:defi_k} and \eqref{eq:propsgG2}.

The map $k$ has to take values in the light cone because for all $p\in (0,\ti b)$ we have
$$v_-\|k(p)\|^2v_-^*=\ltz \left(\frac{t}{p}\right)^{2\zeta}\gG_t^{~p}(\Pf)\gG^{~t}_{p}(\Pf)=\ltz \left(\frac{t}{p}\right)^{2\zeta}id=0.$$
This proves the Proposition for all $p\in (0,\ti b)$. But using \eqref{eq:propsgG2} it can easily be seen to hold for all $p\in (0,b)$.\qed

\end{proof}

\section{Pole of first order}\label{section:first_order_pole}
In this section we investigate the limiting behaviour at $0$ of the Darboux and Calapso transforms of a polarized curve $(\la c\ra,Q)$ on $(0,b)$, where $\la c\ra$ has a smooth extension to $(-\gep,b)$ for some $\gep>0$ and $Q$ has a pole of first order at $0$. For convenience, we further assume that $\la c\ra$ and $Q$ have smooth extensions to some $(0,b+\ti\gep)$, $\ti\gep>0$.

We assume that $Q_t=Q(t)\dd t^2$ satisfies $Q(t)<0$. This is convenient and no restriction of our results because replacing $Q$ by $-Q$ has the same effect as replacing $\gl$ by $-\gl$ and there is no restriction on the range or sign of $\gl$. With this assumption, we can choose a smooth lift $c$ of $\la c\ra$ such that $\Qend(t)=\frac{Q(t)}{\|\pr c(t)\|^2}=-\frac{1}{t}$ and the 1-form associated to $(\la c\ra,Q)$ reads
$$\of_t=-\frac{\dd t}{t}~c(t)\wedge \pr c(t)+\R~id.$$
Now define the degenerate pure pole form
$$\ppf_t:=-\frac{\dd t}{t}c(0)\wedge \pr c(0)+\R ~id.$$
Then $\gl\of-\gl\ppf$ has a finite limit at $0$ because
$$|\of(t)-\ppf(t)|=\left|\frac{c(t)\wedge \pr c(t)-c(0)\wedge \pr c(0)}{t}\right|=\left|c(t)\wedge \frac{\pr c(t)-\pr c(0)}{t}+\frac{c(t)-c(0)}{t}\wedge \pr c(0)\right|$$
and $\la c\ra$ is assumed to be smoothly extendible to $(-\gep,b)$. Thus, $\gl\of-\gl\ppf$ is bounded on $(0,b)$ and $\gl\of$ is a degenerate pole form. In particular, it is a pole form of the first kind.

With the help of Cor \ref{coro:integrable_gauge_trafo} we can now show that every Calapso transform of $(\la c\ra,Q)$ converges.
\begin{theorem}\label{theorem:conv_cal_trafo}
Let $(\la c\ra,Q)$ be a polarized curve on $(0,b)$ such that $\la c\ra$ has a regular extension to $(-\gep,b)$ and $Q$ has a pole of first order at $0$. Then for all $p\in (0,b)$ and $\gl\in\R$, the $\gl$-Calapso transform $\la c_{\gl,p}\ra$ normalized at $p$ has a limit at $0$.
\end{theorem}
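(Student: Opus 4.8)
The plan is to reduce the convergence of the Calapso transform $\la c_{\gl,p}\ra=\gG_p(\gl\of)\la c\ra$ to the explicitly computable pure pole form $\gl\ppf$ by means of the factorization established in Cor \ref{coro:integrable_gauge_trafo}. For $\gl=0$ the transform equals $\la c\ra$, which by hypothesis extends regularly across $0$, so the limit is $\la c(0)\ra$; I therefore assume $\gl\neq 0$. As observed above, $\gl\of$ is then a degenerate pole form of the first kind whose associated pure pole form is $\gl\ppf$, with $\ppf_t=-\tfrac{\dd t}{t}c(0)\wedge\pr c(0)+\R\,id$ (this is a genuine pure pole form since regularity of $\la c\ra$ gives $c(0)\wedge\pr c(0)\neq 0$). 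By the factorization \eqref{eq:gaugetrafo_for_int_xi},
$$\gG_p(\gl\of)=\gG_p(\Gtr{\gl\ppf}{p}{\gl\of})\,\gG_p(\gl\ppf),$$
and Cor \ref{coro:integrable_gauge_trafo} guarantees that the first factor $\gG_p(\Gtr{\gl\ppf}{p}{\gl\of})$ has a limit $g_p$ in $\Pro O(\Rmn)$ at $0$. Being an honest M\"obius transformation, $g_p$ is continuous on all of $S^n$, so it suffices to show that $\gG_p^{~t}(\gl\ppf)\la c(t)\ra$ converges as $t\to 0$; the Calapso transform will then converge to $g_p$ applied to that limit.

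To analyse $\gG_p^{~t}(\gl\ppf)\la c(t)\ra$ I would pass to the orthogonal lift $\gl\Ppf$ and use the explicit exponential \eqref{eq:temp_sim_deg} for the degenerate pure pole form, taking $v_0=c(0)$ and $\ti w=\gl\pr c(0)$, so that
$$\gG_p^{~t}(\gl\Ppf)\,c(t)=c(t)-\gl\ln\!\left(\tfrac{t}{p}\right)\big(c(0)\wedge\pr c(0)\big)c(t)-\tfrac{\gl^2}{2}\left(\ln\tfrac{t}{p}\right)^2\|\pr c(0)\|^2\,\ipl c(t),c(0)\ipr\,c(0).$$
A naive evaluation of the limit is an indeterminate form: the limiting map $\la c(0)c(0)^*\ra$ of the degenerate case of Prop \ref{prop:boundednonzerosimplesections} is undefined precisely at $\la c(0)\ra$, which is exactly the point toward which $\la c(t)\ra$ converges.

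The hard part is therefore to show that $\la c(t)\ra$ approaches $\la c(0)\ra$ fast enough to beat the logarithmic blow-up of $\gG_p^{~t}(\gl\Ppf)$. Here I would differentiate the light cone identity $\ipl c,c\ipr\equiv 0$ to obtain $\ipl c(0),c(0)\ipr=\ipl\pr c(0),c(0)\ipr=0$, whence Taylor expansion yields $\ipl c(t),c(0)\ipr=O(t^2)$ and $\ipl\pr c(0),c(t)\ipr=O(t)$. Expanding $\big(c(0)\wedge\pr c(0)\big)c(t)=\ipl c(0),c(t)\ipr\pr c(0)-\ipl\pr c(0),c(t)\ipr c(0)$ and using $t\ln t\to 0$ together with $t^2(\ln t)^2\to 0$, all three correction terms vanish in the limit, leaving $\gG_p^{~t}(\gl\Ppf)c(t)\to c(0)\neq 0$. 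Consequently $\gG_p^{~t}(\gl\ppf)\la c(t)\ra\to\la c(0)\ra$, and by continuity of the M\"obius transformation $g_p$ the Calapso transform $\la c_{\gl,p}\ra$ converges to $g_p\la c(0)\ra$ at $0$.
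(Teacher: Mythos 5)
Your proposal is correct and takes essentially the same route as the paper's proof: the same factorization $\gG_p(\gl\of)=\gG_p(\Gtr{\gl\ppf}{p}{\gl\of})\,\gG_p(\gl\ppf)$ via Cor \ref{coro:integrable_gauge_trafo}, the same explicit exponential for the degenerate pure pole form (the paper's \eqref{eq:expr_primxi_expl}), and the same mechanism whereby the vanishing of $\ipl c(t),c(0)\ipr$ and $\ipl \pr c(0),c(t)\ipr$ (your $O(t^2)$ and $O(t)$ estimates, slightly sharper than the paper's $O(t)$ bounds, which already suffice) beats the logarithmic blow-up via $t\ln(t/p)\to 0$ and $t\left(\ln(t/p)\right)^2\to 0$. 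Your explicit remark that the naive limit is indeterminate because $\la c(0)c(0)^*\ra$ is undefined exactly at $\la c(0)\ra$ is a nice clarification but does not change the argument.
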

\begin{proof}
If $\gl=0$, then $\la c_{\gl,p}\ra=\la c\ra$ and the statement is trivial. So let $\gl\neq 0$. Factorize $\gG_p(\gl\of)$ as in \eqref{eq:gaugetrafo_for_int_xi} to get
$$\la c_{\gl,p}\ra=\gG_p(\gl\of) \, \la c\ra=\gG_p(\Gtr {\gl\xi} p {\gl\of}) \, \gG_p({\gl\xi}) \, \la c\ra.$$
Since $\gl\of$ is of the first kind, from Cor \ref{coro:integrable_gauge_trafo} we know that $\gG_p(\Gtr {\gl\xi} p {\gl\of})$ has a limit in $\Pro O(\Rmn)$ at $0$. To see that $\gG_p({\gl\xi}) \, \la c\ra$ has a limit at $0$, write
\begin{equation}
\gG_p^{~t}({\gl\xi})=\La id-\ln\left(\frac{t}{p}\right)\gl ~c(0)\wedge \pr c(0)-\frac{\left(\ln\left(\frac{t}{p}\right)\right)^2\gl^2\|\pr c(0)\|^2}{2}c(0)c(0)^*\Ra .
\label{eq:expr_primxi_expl}
\end{equation}
Since both $\frac{1}t \ipl c(0),c(t)\ipr$ and $\frac{1}t \ipl\pr c(0),c(t)\ipr$ have finite limits at $0$, and 
$$\ltz t \ln\left(\frac{t}{p}\right)=0,~~~~~\ltz t\left(\ln\left(\frac{t}{p}\right)\right)^2=0,$$
the product of \eqref{eq:expr_primxi_expl} with $\la c(t)\ra$ converges to $\la c(0)\ra$. Thus, the limit of $\la c_{\gl,p}\ra$ at $0$ exists and is equal to $\gG_p^{~0}(\Gtr {\gl\xi} p {\gl\of}) \, \la c(0)\ra$. \qed
\end{proof}

The Darboux transforms of $(\la c\ra,Q)$ converge to $\la c(0)\ra$.
\begin{theorem}\label{theorem:curve_first_ord_pole}
Let $(\la c\ra,Q)$ be a polarized curve on $(0,b)$ such that $\la c\ra$ has a regular extension to $(-\gep,b)$ and $Q$ has a pole of first order at $0$. Then for all $\gl\in\R\bs\{0\}$, the limit of any $\gl$-Darboux transform $\la \ch\ra$ of $(\la c\ra,Q)$ at $0$ is $\la c(0)\ra$, 
$$\lim_{t\rightarrow 0}\la \ch(t)\ra=\la c(0)\ra.$$
\end{theorem}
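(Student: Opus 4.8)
The plan is to read the limit off directly from Corollary \ref{cor:maintool_sing_firstkind}. As established in the paragraph preceding the theorem, for each fixed $\gl\neq 0$ the associated $1$-form $\gl\of$ is a degenerate pole form of the first kind whose pure pole form $\gl\ppf$ has underlying light cone vector $c(0)$. I would therefore apply Cor \ref{cor:maintool_sing_firstkind} with $\pf=\gl\of$ and $v_0=c(0)$ to obtain a continuous map $k:[0,b)\to\Li^{n+1}$ with $\la k(0)\ra=\la c(0)\ra$ and, for every $p\in(0,b)$,
$$\ltz \gG_t^{~p}(\gl\of)=\la c(0)\,k(p)^*\ra.$$
By Def \ref{defi:trafos} a $\gl$-Darboux transform with initial point $\la\ch_p\ra$, $\ch_p\in\Li^{n+1}$, is the curve $t\mapsto\la\ch(t)\ra=\gG_t^{~p}(\gl\of)\la\ch_p\ra$, so the task reduces to evaluating the limiting endomorphism $\la c(0)\,k(p)^*\ra$ on the point $\la\ch_p\ra$.

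Next I would analyse this limiting map. The endomorphism $c(0)\,k(p)^*$ has rank one, with image $\la c(0)\ra$ and kernel $k(p)^\perp$, so projectively it sends every point off $k(p)^\perp$ to $\la c(0)\ra$ while being undefined on $k(p)^\perp$. Since $\ch_p$ and $k(p)$ are both null and the Witt index of $\Rmn$ equals one, $\ipl\ch_p,k(p)\ipr=0$ holds precisely when $\la\ch_p\ra=\la k(p)\ra$. Hence for every initial point with $\la\ch_p\ra\neq\la k(p)\ra$ we have $\ipl\ch_p,k(p)\ipr\neq 0$, and choosing lifts of $\gG_t^{~p}(\gl\of)$ converging in $End(\Rmn)$ to a representative of $\la c(0)\,k(p)^*\ra$, the images of $\ch_p$ converge to a nonzero multiple of $c(0)$, so $\la\ch(t)\ra\to\la c(0)\ra$.

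The single initial point left uncovered is $\la\ch_p\ra=\la k(p)\ra$, which lies in the kernel of the rank-one limit; this is the main obstacle, since there the projective evaluation of the limiting map is meaningless. For it I would instead use the defining relation \eqref{eq:props_k_deg} of $k$ directly: the corresponding Darboux transform satisfies $\la\ch(t)\ra=\gG_t^{~p}(\gl\of)\la k(p)\ra=\la k(t)\ra$, which tends to $\la k(0)\ra=\la c(0)\ra$ by continuity of $k$ on $[0,b)$. Combining the two cases yields $\ltz\la\ch(t)\ra=\la c(0)\ra$ for every $\gl\neq 0$ and every initial point.
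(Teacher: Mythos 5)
Your proposal is correct and takes essentially the same route as the paper: apply Cor \ref{cor:maintool_sing_firstkind} to the degenerate pole form $\gl\of$ with $v_0=c(0)$, evaluate the rank-one limit $\la c(0)k(p)^*\ra$ at $\la \ch_p\ra$ when $\la \ch_p\ra\neq\la k(p)\ra$, and handle the exceptional initial point $\la \ch_p\ra=\la k(p)\ra$ via the invariance property \eqref{eq:props_k_deg}. Your extra observation that a null $\ch_p$ satisfies $\ipl \ch_p,k(p)\ipr=0$ only if $\la \ch_p\ra=\la k(p)\ra$ simply makes explicit why the projective evaluation of the limiting endomorphism is well defined, a point the paper leaves implicit.
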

\begin{proof}
From Cor \ref{cor:maintool_sing_firstkind} we know that for every $\gl\neq 0$ there is a continuous map $k_\gl:[0,b)\rightarrow \Li^{n+1}$ such that
$$\ltz \gG_t^{~p}(\gl\of)=\la c(0)k_\gl(p)^*\ra,$$
and $k_\gl$ satisfies \eqref{eq:props_k_deg}. Therefore, if $\la \ch_p\ra\neq \la k_\gl(p)\ra$, then
$$\ltz \la \ch(t)\ra=\ltz \gG_t^{~p}(\gl\of) \, \la \ch_p\ra=\la c(0)k_\gl(p)^*\ra\la \ch_p\ra=\la c(0)\ra.$$
If on the other hand $\la \ch_p\ra = \la k_\gl(p)\ra$, then we use the properties \eqref{eq:props_k_deg} of $k_\gl$ to find 
$$\ltz \la \ch(t)\ra=\ltz \gG_t^{~p}(\gl\of)  \, \la k_\gl(p)\ra=\ltz \la k_\gl(t)\ra=\la c(0)\ra.$$
\qed
\end{proof}

\begin{figure}
\centering
\includegraphics[width=0.75\textwidth]{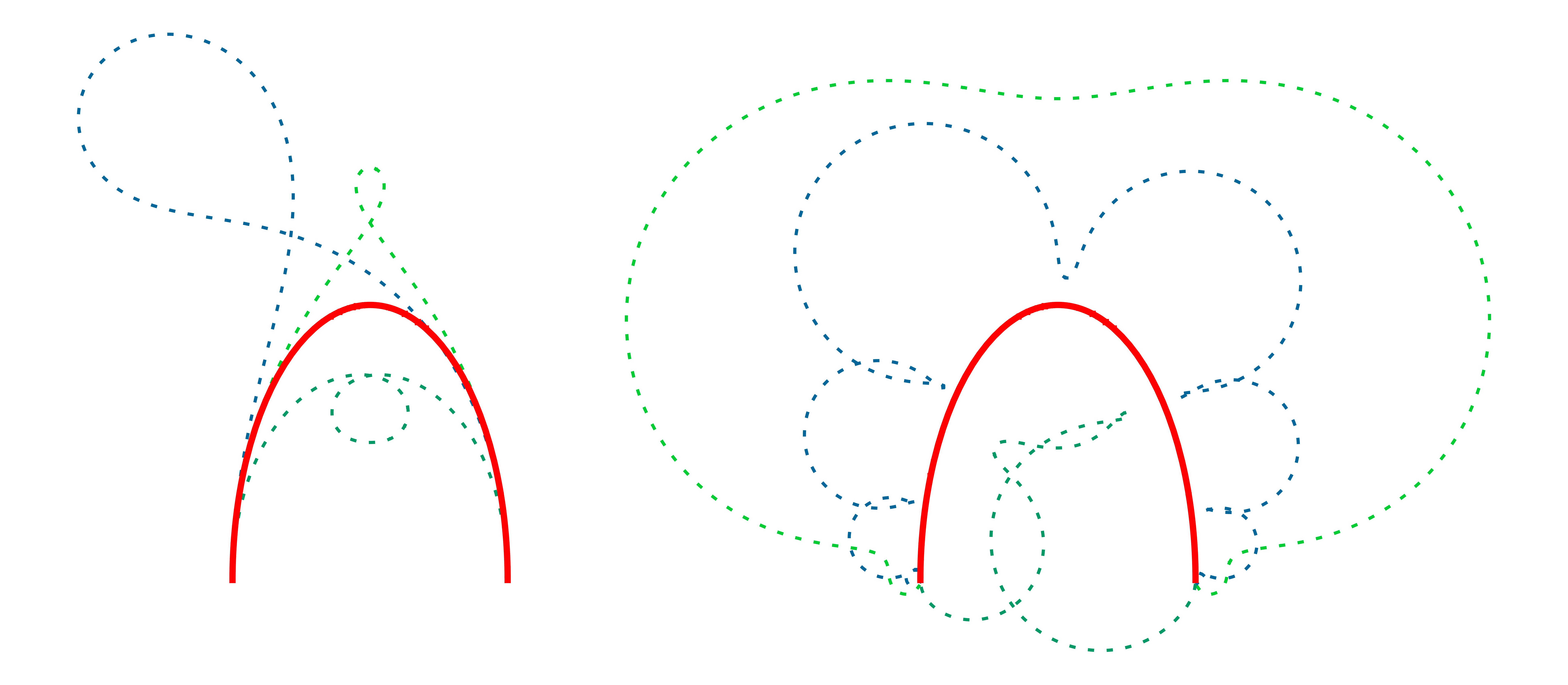}

\caption{Darboux transforms (dashed) of a half ellipse (solid) with respect to a polarization with a pole of first order at each end. The spectral parameter $\gl$ is positive (negative) on the right (left) side.}
\end{figure}


\section{Pole of second order}\label{section:sec_order_pole}
We now come to the more intricate and diverse case of a polarized curve $(\la c\ra,Q)$ on $(0,b)$, where $\la c\ra$ is smoothly extendible to $(-\gep,b)$ for some $\gep>0$ and $Q$ has a pole of second order at $0$. In this case, also $\gl\of$ has a pole of second order at $0$ and we cannot apply the results of Sect \ref{section:maintool} directly. Instead, we use a gauge transformation 
$$\gl\of\mapsto \gatr \hyfr {\gl\of}=\la g\ra^{-1}\gl\of\la g\ra+\la g\ra^{-1}\dd\la g\ra$$
as in \eqref{eq:gauge_trafo_beha} to write
\begin{equation}
\gG_{p}(\gl\of)=\la g(p)\ra \, \gG_{p} (\gatr \hyfr {\gl\of}) \, \la g\ra^{-1},~~~~~~~~\gG^{~p}(\gl\of)=\la g\ra \, \gG^{~p}(\gatr \hyfr {\gl\of}) \, \la g(p)\ra^{-1}.
\label{eq:gauge_trafo_secordpole}
\end{equation}
As we will see in Sect \ref{section:sing_gauge}, it is possible to choose $\la g\ra$ such that $\gatr \hyfr {\gl\of}$ is a pole form and, in particular, has only a pole of first order. This is achieved by a gauge transformation which is singular in the sense that the transforming map $\la g\ra:(0,b)\rightarrow \Pro O(\R^{n+2}_1)$ converges to a map of the kind $\la vw^*\ra\in\Pro End(\Rmn)$ with $v,w\in\Li^{n+1}$ as $0$ is approached. In Sects \ref{section:negla} and \ref{section:posla}, we will then apply the results of Sect \ref{section:maintool} to the pole form $\gatr \hyfr {\gl\of}$ to investigate the behaviour of the Darboux and Calapso transforms of $(\la c\ra,Q)$ at the singular point $0$. 

Again, for convenience, we assume that $\la c\ra$ and $Q$ have smooth extensions to some $(0,b+\ti\gep)$, $\ti\gep>0$.

\subsection{The singular gauge transformation}\label{section:sing_gauge}
We assume that the polarization $Q$ is of the form $Q_t=Q(t)\dd t^2=\frac{\dd t^2}{\id^2}$. Again, this is no restriction of our results: we can assume $Q(t)$ to be positive because replacing $Q$ by $-Q$ has the same effect as replacing $\gl$ by $-\gl$ and we make no restriction on the range of $\gl$. Furthermore, the Darboux and Calapso transformations are invariant under reparametrizations of $\la c\ra$. Since $Q$ has a pole of second order (see Def \ref{defi:pole}), there certainly is a parameter $t$ for $\la c\ra$ such that $Q_t=\frac{\dd t^2}{\id^2}$ and $\la c\ra$ remains smoothly extendible to $(-\gep,b)$.

For the gauge transformation we use a product $g=FR$ of a frame $F:(0,b)\rightarrow O(R^{n+2}_1)$, smoothly extendible to $(-\gep,b)$, and a particularly simple singular factor $R:(0,b)\rightarrow O(R^{n+2}_1)$.

We first construct the frame $F$. To this end, let $c$ be the flat lift of $\la c\ra$, for which $\|\pr c\|^2=1$. Then, with the above assumptions, $\Qend(t)=\frac{1}{t^2}$. Let $N_1,...,N_{n-1}$ be parallel, orthonormal unit normal fields of $c$ that satisfy\footnote{This condition means that $\la N_1,...,N_{n-1}\ra^\perp\cap \Li^{n+1}$ is the curvature circle congruence of $\la c\ra$.} $\ipl \dd N_i,\pr c\ipr=0$ for $i=1,...,n-1$ and let $\bar c:(0,b)\rightarrow \Li^{n+1}$ be the unique map such that $\ipl c,\bar c\ipr=-1$ and $\la c,\bar c\ra=\la \pr c,N_1,...,N_{n-1}\ra^\perp$. Now define the frame $F:(0,b)\rightarrow O(\Rmn)$ to map $t$ to the orthogonal transformation $F(t)$ which maps a $t$-independent pseudo-orthonormal basis $\{\baszer,\basinf, \bastan,\basnor_{1},...,\basnor_{n-1}\}$ of $\Rmn$ to the $t$-dependent basis $\{c(t),\bar c(t),\pr c(t),N_1(t),...,N_{n-1}(t)\}$. The Maurer-Cartan form of $F$ is then of the form
$$F^{-1}\dd F=-\basinf\wedge \bastan~\dd t+\baszer\wedge \Fr^\perp$$
for some $\la \baszer,\basinf\ra^\perp$-valued 1-form $\Fr^\perp$. 

Next, define the singular factor $R:(0,b)\rightarrow O(\R^{n+2}_1)$ by
$$R(t)\baszer=\frac 1 t \,\baszer,~~~~R(t)\basinf=t\,\basinf,~~~~R\big|_{\la \baszer,\basinf\ra^\perp}=id.$$
Then $R$ has Maurer-Cartan form
$$(R^{-1}\dd R)_t=-\frac{\dd t}{\id}\baszer\wedge\basinf.$$
The product $\hyfr:=F R$ is a singular frame for the singular lifts $t^{-1}c(t)$ and $t\bar c(t)$ of $\la c(t)\ra$ and $\la \bar c(t)\ra$, respectively. It satisfies
\begin{equation}
\lim_{t\rightarrow 0}\la \hyfr(t)\ra=\lim_{t\rightarrow 0}\la F(t) \, R(t)\ra=\lim_{t\rightarrow 0}\la F(t) \, t  \, R(t)\ra=\la c(0)\basinf^*\ra\in \Pro End(\Rmn).
\label{eq:limit_g_H}
\end{equation}
Using $\la \hyfr\ra$ for a gauge transformation \eqref{eq:gauge_trafo_beha} of $\gl\of$, the gauge-transformed 1-form $\gatr \hyfr {\gl\of}$ reads
\begin{align}
\gatr \hyfr {\gl\of}=&R^{-1}F^{-1}\gl\of FR+R^{-1}\dd R+R^{-1}(F^{-1}\dd F) R,\notag\\
(\gatr \hyfr {\gl\of})_t=&-\frac{\dd t}{\id}\big(-\left(\gl\baszer-\basinf\right)\wedge \bastan+\baszer\wedge\basinf\big)+t~\baszer\wedge\Fr^\perp_t+\R~id.\label{eq:gauge_trafo_eta}
\end{align}
Indeed, $\gatr \hyfr {\gl\of}$ has only a pole of first order and the pure pole form $\ppf_\gl$ given by
\begin{align*}
\ppf_\gl(t)\dd t:=&-\frac{\dd t}{\id}\big(-\left(\gl \baszer-\basinf\right)\wedge \bastan+\baszer\wedge\basinf\big)+\R~id=\\
=&-\frac{\dd t}{\id}\left(\baszer- \bastan \right)\wedge\left(\basinf-\gl \bastan \right)+\R~id
\end{align*}
is such that $|\gatr \hyfr {\gl\of}-\ppf_\gl|$ is bounded\footnote{In fact, even $\frac 1 t |(\gatr \hyfr {\gl\of}-\ppf_\gl)_t|$ is bounded on $(0,b)$, which will be useful in the proof of Thm \ref{theorem:darb_tr_sec_ord_pos}.} on $(0,b)$. Thus, $\gatr \hyfr {\gl\of}$ is a pole form.

The next Lemma, which summarizes the algebraic properties of $\ppf_\gl$, can be verified by direct computation.
\begin{lemma}\label{lemma:xi_la_data}
Define
\begin{equation}
v_\pm:=\sqrt{1-2\gl}(\gl\baszer-\basinf)\pm (\gl\baszer+\basinf-2\gl\bastan),
\label{eq:eigenvectors}
\end{equation}
such that $\ppf_\gl$ can be written as
$$\ppf_\gl(t)\dd t=-\frac{\dd t}{t}~\sqrt{1-2\gl} \frac{v_-\wedge v_+}{\ipl v_-,v_+\ipr}+\R~id=:-\frac{\dd t}{t}~v\wedge w+\R~id.$$
If $1-2\gl>0$, then $\la v,w\ra$ is Minkowski and $v_\pm$ are eigenvectors of $v\wedge w$ with real eigenvalues $\pm \sqrt{1-2\gl}$. If $1-2\gl=0$, then $\la v,w\ra$ is degenerate and $v_+=-v_-\in\la v,w\ra$ is null. If $1-2\gl<0$, then $\la v,w\ra$ is spacelike and $v_\pm$ are complex conjugate eigenvectors of $v\wedge w$ with imaginary eigenvalues $\pm \sqrt{1-2\gl}\in i\R$.

In particular, for $\gl>0$, the pole form $\gl\of$ is of the first kind. For $\gl\leq 0$, it is of the second kind.
\end{lemma}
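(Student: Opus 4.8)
The plan is to prove the Lemma by direct computation in the pseudo-orthonormal basis $\{\baszer,\basinf,\bastan,\basnor_1,\dots,\basnor_{n-1}\}$, using only the relations $\ipl\baszer,\basinf\ipr=-1$, $\ipl\bastan,\bastan\ipr=1$ and the vanishing of all remaining pairings among $\baszer,\basinf,\bastan$ (these follow from $F\in O(\Rmn)$ together with the normalizations of $c,\bar c,\pr c$). Since the pure pole form is $\ppf_\gl=-\frac{\dd t}{t}\,v\wedge w+\R~id$ with $v\wedge w=(\baszer-\bastan)\wedge(\basinf-\gl\bastan)$, everything takes place inside the three-dimensional space $\la\baszer,\basinf,\bastan\ra$, so the computations stay small. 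I would abbreviate $V:=\baszer-\bastan$ and $W:=\basinf-\gl\bastan$, so that $v\wedge w=V\wedge W$ and $\la v,w\ra=\la V,W\ra$.

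For the signature claim I would compute the Gram matrix of $\{V,W\}$: one finds $\ipl V,V\ipr=1$, $\ipl V,W\ipr=\gl-1$ and $\ipl W,W\ipr=\gl^2$, whence its determinant equals $\gl^2-(\gl-1)^2=2\gl-1=-(1-2\gl)$. Since $\ipl V,V\ipr=1>0$, the plane $\la v,w\ra$ is Minkowski, degenerate or spacelike exactly as $1-2\gl$ is positive, zero or negative, which settles the three assertions at the level of signatures. For the eigenvalues I would realise $V\wedge W$ as an endomorphism of the plane via $(V\wedge W)(x)=\ipl V,x\ipr W-\ipl W,x\ipr V$; the resulting $2\times 2$ matrix has trace $0$ and determinant $2\gl-1$, so its eigenvalues solve $\mu^2=1-2\gl$, giving $\mu=\pm\sqrt{1-2\gl}$ and hence $\zeta=\sqrt{1-2\gl}$ in the Minkowski case.

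To identify the explicit eigenvectors I would first record the convenient identities $\gl\baszer-\basinf=\gl V-W$ and $\gl\baszer+\basinf-2\gl\bastan=\gl V+W$, so that $v_\pm=\sqrt{1-2\gl}\,(\gl V-W)\pm(\gl V+W)$ indeed lie in $\la V,W\ra$, and then check $(V\wedge W)v_\pm=\pm\sqrt{1-2\gl}\,v_\pm$ by substituting into the $2\times 2$ action above. Writing $a:=\gl\baszer-\basinf$ and $b:=\gl\baszer+\basinf-2\gl\bastan$, a short computation gives $\|a\|^2=2\gl$, $\|b\|^2=4\gl^2-2\gl$ and $\ipl a,b\ipr=0$; from $v_\pm=\sqrt{1-2\gl}\,a\pm b$ this yields $\ipl v_-,v_+\ipr=(1-2\gl)\|a\|^2-\|b\|^2=4\gl(1-2\gl)$ and $v_-\wedge v_+=2\sqrt{1-2\gl}\,a\wedge b$, from which the normalization $\sqrt{1-2\gl}\,\frac{v_-\wedge v_+}{\ipl v_-,v_+\ipr}=\frac{1}{2\gl}\,a\wedge b=v\wedge w$ follows after expanding $a\wedge b$. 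For the boundary value $\gl=\frac12$ the quotient $\frac{v_-\wedge v_+}{\ipl v_-,v_+\ipr}$ degenerates to $0/0$, so there I would argue directly: $\sqrt{1-2\gl}=0$ forces $v_\pm=\pm b$, and $\|b\|^2=4\gl^2-2\gl=0$ shows $v_+=-v_-$ is null, matching the degenerate case; likewise for $1-2\gl<0$ the eigenvalues $\pm\sqrt{1-2\gl}$ become imaginary and $v_\pm$ are complex conjugate up to sign, as claimed.

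Finally, the statement about the kind follows by combining $\zeta=\sqrt{1-2\gl}$ with the definition of first/second kind: for $\gl>0$ we are in one of the cases $0<\gl<\frac12$ (Minkowski with $\zeta=\sqrt{1-2\gl}<1$), $\gl=\frac12$ (degenerate), or $\gl>\frac12$ (spacelike), each of which is of the first kind; for $\gl\le 0$ the form is Minkowski with $\zeta=\sqrt{1-2\gl}\ge 1$, hence of the second kind. I expect no genuine obstacle here: the whole Lemma is bookkeeping, and the only point demanding care is the degenerate boundary $\gl=\frac12$ (and, to a lesser extent, $\gl=0$, where the eigenvector formula degenerates), at which one must abandon the quotient representation via $v_\pm$ and fall back on the explicit bivector $v\wedge w=(\baszer-\bastan)\wedge(\basinf-\gl\bastan)$.
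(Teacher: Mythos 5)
Your proposal is correct and takes essentially the same route as the paper, which offers no written proof beyond the remark that the Lemma ``can be verified by direct computation'': your Gram-matrix signature argument, the trace/determinant computation giving $\mu^2=1-2\gl$, the explicit eigenvector check, and the normalization identity $\sqrt{1-2\gl}\,\tfrac{v_-\wedge v_+}{\ipl v_-,v_+\ipr}=v\wedge w$ all check out. Your separate handling of the boundary values $\gl=\tfrac12$ and $\gl=0$, where the quotient and the eigenvector formula degenerate, is exactly the care this verification requires.
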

By Prop \ref{prop:maintool_sing}, Cor \ref{coro:integrable_gauge_trafo} and Prop \ref{prop:boundednonzerosimplesections}, for $1-2\gl \geq 0$, the primitives $\gG^{~t}_{p}(\gatr \hyfr {\gl\of})$ converge as $t\rightarrow 0$ while for $1-2\gl<0$ they do not have a limit at $0$. We treat these cases separately in the next two sections.

\subsection{The behaviour at the singularity for $1-2\gl\geq 0$}\label{section:negla}
To determine the behaviour of the Darboux and Calapso transforms of $\la c\ra$ at $0$, we seek the limits $t\rightarrow 0$ of 
\begin{align}
\la \ch(t)\ra&=\la g(t)\ra \, \gG_t^{~p}(\gatr g {\gl\of}) \, \la g(p)\ra^{-1}\la \ch_p\ra,\label{eq:expr_darbouxtrafo_secord}\\
\la c_{\gl,p}(t)\ra&=\la g(p)\ra \, \gG_p^{~t}(\gatr g {\gl\of}) \, \la g(t)\ra^{-1}\la c(t)\ra=\la g(p)\ra \, \gG_p^{~t}(\gatr g {\gl\of}) \, \la \baszer\ra.\label{eq:expr_caltrafo_secord}
\end{align}
By \eqref{eq:limit_g_H}, $\la g(t)\ra$ has a limit at $0$. Since $1-2\gl\geq 0$, by Lemma \ref{lemma:xi_la_data} the pole form $\gatr g {\gl\of}$ is Minkowski or degenerate such that its primitives also have limits at $0$. Very little work is required to prove convergence of the Darboux and Calapso transforms at $0$ in the following two theorems using the results of Sect \ref{section:maintool}.

\begin{theorem}\label{theorem:darb_tr_sec_ord_pole}
Let $(\la c\ra,Q)$ be a polarized curve on $(0,b)$ where $\la c\ra$ is regularly extendible to $(-\gep,b)$ and $Q$ has a pole of second order at $0$. Then for every $\gl\in\R\bs\{0\}$ with $1-2\gl\geq 0$, all $\gl$-Darboux transforms $\la \ch\ra$ of $\la c\ra$ converge to $\la c(0)\ra$ as $0$ is approached, 
$$\lim_{t\rightarrow 0}\la \ch(t)\ra=\la c(0)\ra.$$
\end{theorem}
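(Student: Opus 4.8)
The plan is to follow the architecture of the proof of Thm~\ref{theorem:curve_first_ord_pole}, now carried out in the gauge-transformed picture \eqref{eq:expr_darbouxtrafo_secord}. Writing $y_p:=\la g(p)\ra^{-1}\la\ch_p\ra\in\Pro(\Li^{n+1})$ for the initial point transported by the inverse gauge, the Darboux transform reads
$$\la\ch(t)\ra=\la g(t)\ra\,\big(\gG_t^{~p}(\gatr g{\gl\of})\,y_p\big).$$
By \eqref{eq:limit_g_H} the first factor has the limit $\la g(t)\ra\to\la c(0)\basinf^*\ra$. For the second factor I would invoke Lemma~\ref{lemma:xi_la_data}: since $1-2\gl\geq0$, the pole form $\gatr g{\gl\of}$ is Minkowski (if $1-2\gl>0$) or degenerate (if $1-2\gl=0$), so by Prop~\ref{prop:maintool_sing}, respectively Cor~\ref{cor:maintool_sing_firstkind}, there is a continuous $k\colon[0,b)\to\Li^{n+1}$ with $\gG_t^{~p}(\gatr g{\gl\of})\to\la v_\bullet\,k(p)^*\ra$ as $t\to0$, where $v_\bullet=v_-$ in the Minkowski case and $v_\bullet=v_0$ in the degenerate case, and with the equivariance $\la k(t)\ra=\gG_t^{~p}(\gatr g{\gl\of})\,\la k(p)\ra$ together with $\la k(0)\ra=\la v_+\ra$ (resp.\ $\la v_0\ra$).

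The next step is to exploit the rank-one structure of $\la v_\bullet\,k(p)^*\ra$. Its kernel is the hyperplane $\{x:\ipl x,k(p)\ipr=0\}$, but as $y_p$ and $k(p)$ both lie on the light cone and a null vector is $\ipl\cdot,\cdot\ipr$-orthogonal to a null vector only if the two are parallel, this kernel meets $\Pro(\Li^{n+1})$ in the single point $\la k(p)\ra$. This dichotomy mirrors exactly the one in Thm~\ref{theorem:curve_first_ord_pole}. If $y_p\neq\la k(p)\ra$, then joint continuity of the evaluation map $\Pro End(\Rmn)\times\Pro(\Li^{n+1})\to\Pro(\Li^{n+1})$ away from kernels gives first $\gG_t^{~p}(\gatr g{\gl\of})\,y_p\to\la v_\bullet\ra$ and then $\la\ch(t)\ra\to\la c(0)\basinf^*\ra\la v_\bullet\ra$. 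If instead $y_p=\la k(p)\ra$, the equivariance of $k$ turns the second factor into $\la k(t)\ra\to\la v_+\ra$ (resp.\ $\la v_0\ra$), whence $\la\ch(t)\ra\to\la c(0)\basinf^*\ra\la v_+\ra$.

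In both cases the conclusion $\la\ch(t)\ra\to\la c(0)\ra$ hinges on the image directions $v_-,v_+$ (and $v_0$) avoiding the kernel $\basinf^\perp$ of the degenerate limit $\la c(0)\basinf^*\ra$, since $\la c(0)\basinf^*\ra\la u\ra=\la\ipl u,\basinf\ipr\,c(0)\ra=\la c(0)\ra$ precisely when $\ipl u,\basinf\ipr\neq0$. This non-vanishing is the crux, and I would establish it by a direct computation from the explicit eigenvectors \eqref{eq:eigenvectors} using the pseudo-orthonormal relations $\ipl\baszer,\basinf\ipr=-1$ and $\ipl\basinf,\basinf\ipr=\ipl\basinf,\bastan\ipr=0$: one finds $\ipl v_\pm,\basinf\ipr=-\gl\big(\sqrt{1-2\gl}\pm1\big)$, which is nonzero for every $\gl\neq0$ with $1-2\gl\geq0$ and specializes to $\ipl v_0,\basinf\ipr=-\gl\neq0$ in the degenerate limit $1-2\gl=0$. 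The only real subtlety is bookkeeping in $\Pro End(\Rmn)$: because both $\la g(t)\ra$ and $\gG_t^{~p}(\gatr g{\gl\of})$ collapse to rank-one maps as $t\to0$, limits of products may be taken only at points off the relevant kernels, and it is the null-orthogonality observation that reduces the exceptional initial data to the single point $\la g(p)\ra\la k(p)\ra$, which the $k$-equivariance then handles.
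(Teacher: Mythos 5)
Your proposal is correct and takes essentially the same route as the paper's proof: the same factorization \eqref{eq:expr_darbouxtrafo_secord} via the singular frame limit \eqref{eq:limit_g_H}, the same dichotomy on whether the gauged initial point equals $\la k_\gl(p)\ra$ (using Prop \ref{prop:maintool_sing} for $1-2\gl>0$ and Cor \ref{cor:maintool_sing_firstkind} for $1-2\gl=0$, with the equivariance of $k_\gl$ handling the exceptional case), and the same crux $\basinf^*(v_\pm)\neq 0$. Your explicit computation $\ipl v_\pm,\basinf\ipr=-\gl\bigl(\sqrt{1-2\gl}\pm 1\bigr)$ and the null-orthogonality observation identifying the kernel point merely spell out details the paper asserts or leaves implicit.
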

\begin{proof}
Let first $1-2\gl>0$. To find the behaviour of the Darboux transforms $\la \ch\ra$ of $\la c\ra$ at $0$, we want to take the limit $t\rightarrow 0$ of \eqref{eq:expr_darbouxtrafo_secord}. According to \eqref{eq:limit_g_H} and Prop \ref{prop:maintool_sing},
$$\lim_{t\rightarrow 0}\la \hyfr(t)\ra=\la c(0)\basinf^*\ra,~~~~\lim_{t\rightarrow 0}\gG^{~p}_t(\gatr \hyfr {\gl\of})=\la v_- k_\gl(p)^*\ra$$
for a continuous map $k_\gl:[0,b)\rightarrow \Li^{n+1}$ which satisfies \eqref{eq:props_k_deg} and $v_\pm$ given by \eqref{eq:eigenvectors}.

If $\la \ch_p\ra\neq \la \hyfr(p)\ra\la k_\gl(p)\ra$. Then
$$\ltz \la \ch(t)\ra=\ltz \la g(t)\ra \, \gG^{~p}_t(\gatr \hyfr {\gl\of}) \, \la g(p)\ra^{-1}\la \ch_p\ra=\la c(0)\basinf^*\ra\la v_- k_\gl(p)^*\ra \la g(p)\ra^{-1}\la \ch_p\ra=\la c(0)\ra,$$
where we used $\basinf^*(v_-)\neq 0$. 

If on the other hand $\la \ch_p\ra = \la \hyfr(p)\ra\la k_\gl(p)\ra$, then we use \eqref{eq:props_of_k} to get
$$\ltz \la \ch(t)\ra=\ltz \la \hyfr(t)\ra \, \gG_t^{~p}(\gatr \hyfr {\gl\of}) \, \la k_\gl(p)\ra=\ltz \la \hyfr(t)\ra \la k_\gl(t)\ra=\la c(0)\basinf^*\ra \la v_+^*\ra=\la c(0)\ra,$$
where in the last equality we used $\basinf^*(v_+)\neq 0$. 

This proves the theorem for $1-2\gl>0$. For $1-2\gl=0$, the proof works completely analogous with Cor \ref{cor:maintool_sing_firstkind} in place of Prop \ref{prop:maintool_sing} and \eqref{eq:props_k_deg} instead of \eqref{eq:props_of_k}.\qed
\end{proof}

For the Calapso transforms, even less work is necessary.
\begin{theorem}\label{prop:limit_cal_tr_sec_ord_neg}
Let $(\la c\ra,Q)$ be a polarized curve on $(0,b)$ where $\la c\ra$ has a regular extension to $(-\gep,b)$ and $Q$ has a pole of second order at $0$. Then all $\gl$-Calapso transforms with $1-2\gl\geq 0$ converge as $t\rightarrow 0$.
\end{theorem}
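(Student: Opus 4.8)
The plan is to read off convergence directly from the already-simplified form \eqref{eq:expr_caltrafo_secord} of the normalized Calapso transform. For $\gl=0$ the transform is just $\la c\ra$, so I assume $\gl\neq 0$. By \eqref{eq:expr_caltrafo_secord} we have $\la c_{\gl,p}(t)\ra=\la \hyfr(p)\ra\,\gG_p^{~t}(\gatr \hyfr {\gl\of})\,\la\baszer\ra$, and since the M\"obius transformation $\la \hyfr(p)\ra$ does not depend on $t$, the whole problem reduces to showing that $t\mapsto \gG_p^{~t}(\gatr \hyfr {\gl\of})\,\la\baszer\ra$ converges as $t\rightarrow 0$. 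This is exactly why less work is needed than in the Darboux case: there is no free initial point to split into sub-cases, so I only have to track the single fixed point $\la\baszer\ra$.

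Next I would invoke Lemma \ref{lemma:xi_la_data}: since $1-2\gl\geq 0$, the pole form $\gatr \hyfr {\gl\of}$ is Minkowski (when $1-2\gl>0$) or degenerate (when $1-2\gl=0$), hence of a type for which the primitives have a limit at $0$. Combining Prop \ref{prop:maintool_sing} (Minkowski) or Cor \ref{cor:maintool_sing_firstkind} (degenerate, taking the null eigenvector $v_0=v_-$) with the identity \eqref{eq:equality_of_prims} relating projective and orthogonal primitives, I obtain a continuous map $k_\gl:[0,b)\rightarrow\Li^{n+1}$ with $k_\gl(p)\neq 0$ for all $p$ and
$$\ltz \gG_p^{~t}(\gatr \hyfr {\gl\of})=\la k_\gl(p)\,v_-^*\ra,$$
a rank-one element of $\Pro End(\Rmn)$, where $v_\pm$ are the eigenvectors \eqref{eq:eigenvectors}.

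The one genuine point to check — and the step I expect to be the crux — is that $\la\baszer\ra$ does not lie in the kernel of this limiting rank-one map; otherwise the limit would send $\la\baszer\ra$ to $0$ and convergence could break down. The kernel is $v_-^{\perp}$, so I must verify $\ipl\baszer,v_-\ipr\neq 0$. Using the pseudo-orthonormal relations $\ipl\baszer,\baszer\ipr=0$, $\ipl\baszer,\basinf\ipr=-1$, $\ipl\baszer,\bastan\ipr=0$ together with the explicit formula \eqref{eq:eigenvectors}, a short computation gives $\ipl\baszer,v_-\ipr=1+\sqrt{1-2\gl}>0$. Passing to lifts that converge in $End(\Rmn)$ (as furnished by Prop \ref{prop:maintool_sing}, resp.\ Prop \ref{prop:boundednonzerosimplesections} and Cor \ref{coro:integrable_gauge_trafo}) and evaluating on $\baszer$, I conclude
$$\ltz \gG_p^{~t}(\gatr \hyfr {\gl\of})\,\la\baszer\ra=\la k_\gl(p)\,v_-^*(\baszer)\ra=\la k_\gl(p)\ra,$$
and therefore $\la c_{\gl,p}(t)\ra$ converges to $\la \hyfr(p)\ra\la k_\gl(p)\ra$ as $t\rightarrow 0$. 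Everything else is bookkeeping already carried out in Sect \ref{section:sing_gauge}, so no further estimates are required.
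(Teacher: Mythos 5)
Your proof is correct and takes essentially the same route as the paper: reduce via \eqref{eq:expr_caltrafo_secord}, apply Prop \ref{prop:maintool_sing} for $1-2\gl>0$ resp.\ Cor \ref{cor:maintool_sing_firstkind} for $1-2\gl=0$, and use $v_-^*(\baszer)\neq 0$ to evaluate the rank-one limit, giving $\la \hyfr(p)\ra\la k_\gl(p)\ra$. Your explicit computation $\ipl\baszer,v_-\ipr=1+\sqrt{1-2\gl}>0$ merely makes precise the nonvanishing that the paper asserts without calculation.
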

\begin{proof}
In \eqref{eq:expr_caltrafo_secord}, we use Prop \ref{prop:maintool_sing} for $1-2\gl>0$, Cor \ref{cor:maintool_sing_firstkind} for $1-2\gl=0$ and $v_-^*(\baszer)\neq 0$ to get
$$\ltz \la c_{\gl,p}(t)\ra=\ltz\la \hyfr(p)\ra  \, \gG_p^{~t}(\gatr \hyfr {\gl\of}) \, \la \baszer\ra=\la \hyfr(p)\ra \la k_\gl(p)v_-^*\ra\la \baszer\ra =\la \hyfr(p)\ra\la k_\gl(p)\ra.$$
\qed
\end{proof}

\subsection{The behaviour at the singularity for $1-2\gl<0$}\label{section:posla}
In this case, $\ppf_\gl$ takes values in an algebra of infinitesimal Euclidean rotations. Thus, by Prop \ref{prop:boundednonzerosimplesections}, the primitives of $\ppf_\gl$ do not have a limit at $0$ and hence by Cor \ref{cor:maintool_sing_firstkind} neither do the primitives of $\gatr \hyfr{\gl\of}$ converge. From this it follows that the $\gl$-Calapso transforms of $\la c\ra$ do not converge.

\begin{theorem}\label{theorem:cal_tra_at_b_secord_pos}
Let $(\la c\ra,Q)$ be a polarized curve on $(0,b)$ such that $\la c\ra$ has a regular extension to $(-\gep,b)$ and $Q$ has a pole of second order at $0$. For $\gl\in\R\bs\{0\}$ and $p\in(0,b)$, the $\gl$-Calapso transform normalized at $p$ tends\footnote{$\la c_{\gl,p}\ra$ tends towards $\la C_{\gl,p}\ra$ in the sense that the distance between $\la c_{\gl,p}(t)\ra$ and $\la C_{\gl,p}(t)\ra$ measured with respect to an arbitrary representative metric of the conformal structure of $S^n$ tends to zero as $t\rightarrow 0$.} towards the circular motion
$$\la C_{\gl,p}(t)\ra:=\la \hyfr(p)\ra \, \gG^{~0}_{p}(\Gtr {\ppf_\gl}{p}{\gatr\hyfr{\gl\of}}) \, \gG_{p}^{~t}(\ppf_\gl)\La \baszer \Ra $$
whose speed tends to infinity as $t\rightarrow 0$. In particular, it does not have a limit point at $0$, but the limit circle
\begin{equation}
\LiSe[\la c_{\gl,p}\ra]:=\hyfr(p) \, \gG^{~0}_{p}(\Gtr {\xi_\gl}{p}{\gatr\hyfr{\gl\of}}) \, \la \baszer,\basinf,\bastan\ra\cap\Li^{n+1}.
\label{eq:limit_circle_cal}
\end{equation}
\end{theorem}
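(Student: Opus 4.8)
The plan is to exploit the factorization of Cor \ref{coro:integrable_gauge_trafo} to isolate the single non-convergent factor responsible for the circular motion. Since $1-2\gl<0$, Lemma \ref{lemma:xi_la_data} tells us that $\gatr \hyfr {\gl\of}$ is a spacelike pole form and hence of the first kind, so that Cor \ref{coro:integrable_gauge_trafo} applies; evaluating the factorization \eqref{eq:gaugetrafo_for_int_xi} at $t$ yields
$$\gG_p^{~t}(\gatr \hyfr {\gl\of})=\gG_p^{~t}(\Gtr {\ppf_\gl}{p}{\gatr\hyfr{\gl\of}})\,\gG_p^{~t}(\ppf_\gl),$$
where by Cor \ref{coro:integrable_gauge_trafo} the first factor extends continuously to $t=0$ with genuine limit $\gG^{~0}_{p}(\Gtr {\ppf_\gl}{p}{\gatr\hyfr{\gl\of}})\in\Pro O(\Rmn)$, while by Prop \ref{prop:boundednonzerosimplesections} (spacelike case) the second factor is a Euclidean rotation in $\la v,w\ra$ that does not converge. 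Substituting into \eqref{eq:expr_caltrafo_secord} gives
$$\la c_{\gl,p}(t)\ra=\la \hyfr(p)\ra\,\gG_p^{~t}(\Gtr {\ppf_\gl}{p}{\gatr\hyfr{\gl\of}})\,\gG_p^{~t}(\ppf_\gl)\,\La\baszer\Ra,$$
which differs from $\la C_{\gl,p}(t)\ra$ only by replacing the first, convergent factor by its limit.

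Next I would upgrade this matrix-level statement to the asymptotic-distance statement required by the footnote. Writing $M(t):=\la \hyfr(p)\ra\,\gG_p^{~t}(\Gtr {\ppf_\gl}{p}{\gatr\hyfr{\gl\of}})$ and $M_0:=\la \hyfr(p)\ra\,\gG^{~0}_{p}(\Gtr {\ppf_\gl}{p}{\gatr\hyfr{\gl\of}})$ — both genuine M\"obius transformations with $M(t)\to M_0$ in $\Pro O(\Rmn)$ — and setting $x(t):=\gG_p^{~t}(\ppf_\gl)\la\baszer\ra$, we have $\la c_{\gl,p}(t)\ra=M(t)\cdot x(t)$ and $\la C_{\gl,p}(t)\ra=M_0\cdot x(t)$, with $x(t)$ confined to the compact circle that is the orbit of $\la\baszer\ra$ under the rotation. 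Since the action of $\Pro O(\Rmn)$ on $S^n$ is jointly continuous and $x(t)$ ranges in a fixed compact set, the convergence $M(t)\to M_0$ gives, by uniform continuity on a compact neighbourhood of $M_0$ times that circle, that the distance between $M(t)\cdot x(t)$ and $M_0\cdot x(t)$ measured in any representative metric tends to $0$. This is exactly the claim that $\la c_{\gl,p}\ra$ tends towards $\la C_{\gl,p}\ra$.

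It then remains to analyse $\la C_{\gl,p}\ra=M_0\cdot\gG_p^{~t}(\ppf_\gl)\la\baszer\ra$ directly. By the Euclidean interpretation following Prop \ref{prop:boundednonzerosimplesections}, $\gG_p^{~t}(\ppf_\gl)$ is a rotation in $\la v,w\ra$ through an angle proportional to $\ln(t/p)$, whose $t$-derivative blows up as $t\to 0$; since $M_0$ is a fixed diffeomorphism, the speed of $\la C_{\gl,p}\ra$ tends to infinity. As $\baszer$ is moved by the rotation — because $\ipl\baszer,\gl\baszer-\basinf\ipr\neq 0$ shows $\baszer$ has a nonzero component in $\la v,w\ra$ — the angle runs through all of $(-\infty,\theta(p)]$, so the orbit winds around its circle infinitely often and its limit set as $t\to 0$ is that entire circle, whence no single limit point exists. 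That circle is the projectivised intersection of $\Li^{n+1}$ with $\la\baszer,v,w\ra$; using the eigenvector formulas \eqref{eq:eigenvectors} one checks $\la\baszer,v,w\ra=\la\baszer,\basinf,\bastan\ra$, so applying $M_0$ produces precisely the limit circle \eqref{eq:limit_circle_cal}.

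The main obstacle I anticipate is the passage from matrix convergence to convergence of curves in $S^n$: because $\gG_p^{~t}(\ppf_\gl)$ itself has no limit, one cannot pass to limits factor by factor, and one must instead argue that the bounded, spinning orbit $x(t)$ is transported by two M\"obius maps which become uniformly close. The compactness-and-uniform-continuity argument above is the clean route, but it hinges crucially on the fact — guaranteed by Cor \ref{coro:integrable_gauge_trafo} only because $\gatr\hyfr{\gl\of}$ is of the \emph{first} kind — that the limit $M_0$ lies in $\Pro O(\Rmn)$ rather than on its boundary $\{\la vw^*\ra\}$ in $\Pro End(\Rmn)$; were $M_0$ a boundary point, its action on $x(t)$ could degenerate and the comparison would fail. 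A secondary, purely algebraic check is the identification $\la\baszer,v,w\ra=\la\baszer,\basinf,\bastan\ra$ needed to match \eqref{eq:limit_circle_cal}.
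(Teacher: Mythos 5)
Your proposal is correct and follows essentially the same route as the paper: both rest on the factorization $\gG_p(\gatr\hyfr{\gl\of})=\gG_p(\Gtr{\ppf_\gl}{p}{\gatr\hyfr{\gl\of}})\,\gG_p(\ppf_\gl)$ from Cor \ref{coro:integrable_gauge_trafo} for the spacelike (first kind) pole form, the continuity of the first factor at $0$ with limit in $\Pro O(\Rmn)$, and the observation that $\gG_p(\ppf_\gl)$ rotates $\la \baszer\ra$ on the circle $\la \baszer,\basinf,\bastan\ra\cap\Li^{n+1}$ with speed blowing up as $t\rightarrow 0$. The only difference is in packaging: where you establish the asymptotic-distance claim via uniform continuity of the $\Pro O(\Rmn)$-action on a compact set, the paper instead shows that the Minkowski inner product of bounded lifts of $\la c_{\gl,p}\ra$ and $\la C_{\gl,p}\ra$ tends to zero while the lifts stay away from zero --- two equivalent ways of proving the same estimate, with your explicit check $\la \baszer,v,w\ra=\la \baszer,\basinf,\bastan\ra$ making precise what the paper asserts directly from $\Ppf_\gl$ taking values in the subalgebra $\la(\baszer-\bastan)\wedge(\basinf-\gl\bastan)\ra$.
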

\begin{proof}
Let $\gO$ and $\Ppf_\gl$ be the orthogonal lifts of $\go$ and $\ppf_\gl$, respectively. Since $\gG_{p}(\Gtr {\Ppf_\gl}{p}{\gatr\hyfr{\gl\Of}})$ is continuous on $[0,b)$, the inner product of the bounded lifts
\begin{align*}
c_{\gl,p}(t)=&g(p) \, \gG_{p}^{~t}(\Gtr {\Ppf_\gl}{p}{\gatr\hyfr{\gl\Of}}) \, \gG_{p}^{~t}(\Ppf_\gl) \, \baszer,\\
C_{\gl,p}(t)=&g(p) \, \gG_{p}^{~0}(\Gtr {\Ppf_\gl}{p}{\gatr\hyfr{\gl\Of}}) \, \gG_{p}^{~t}(\Ppf_\gl) \, \baszer
\end{align*}
of $\la c_{\gl,p}\ra$ and $\la C_{\gl,p}\ra$, respectively, tends to zero. Since these lifts do not converge to zero at $0$, we conclude that $\la c_{\gl,p}\ra$ approaches $\la C_{\gl,p}\ra$.

That $\la C_{\gl,p}\ra$ is a parametrization of the circle $\hyfr(p) \, \gG_{p}^{~0}(\Gtr {\xi_\gl}{p}{\gatr\hyfr{\gl\of}}) \, \la \baszer,\basinf,\bastan\ra$ follows because $\Ppf_\gl$ takes values in the subalgebra $\la (\baszer-\bastan)\wedge (\basinf-\gl\bastan)\ra$: the 1-parameter group $\gG_{p}(\ppf_\gl)$ of Euclidean rotations moves the point $\la \baszer\ra$ on the circle $\la \baszer,\basinf, \bastan\ra\cap\Li^{n+1}$ with speed increasing towards infinity as $0$ is approached.\qed
\end{proof}
A generic Darboux transform, in contrast, has a limit at $0$. As above, we have
\begin{equation}
\la \ch(t)\ra=\gG^{~p}_t(\gl\of) \, \la \ch_p\ra=\la \hyfr(t)\ra \, \gG^{~p}_t(\gatr \hyfr {\gl\of}) \, \la \hyfr(p)\ra^{-1}\la\ch_p\ra.
\label{eq:expr_darb_trafo_secord2}
\end{equation}
We know that $\la \hyfr(t)\ra$ converges to $\la c(0)\basinf^*\ra$ and one might expect that if $\gG^{~p}_t(\gatr \hyfr {\gl\of}) \, \la \hyfr(p)\ra^{-1}\la\ch_p\ra$ stays sufficiently far away from the kernel $\la \basinf\ra^\perp$ of $\la c(0)\basinf^*\ra$, then in the limit $\la c(0)\basinf^*\ra$ forces $\la \ch\ra$ towards $\la c(0)\ra$. This idea is made precise and affirmed by the following Lemma, formulated in a more general context.

\begin{lemma}\label{lemma:prankone_with_arb}
For $\cV$ an arbitrary subset of $\R^m$ and $q\in\R^m$ a limit point of $\cV$, let $\la A\ra:\cV\rightarrow \Pro O(\R^{n+2}_1)$ be a map with limit $\la xy^*\ra$ at $q$, where $x,y\in\Li^{n+1}$. Let $\la u\ra:\cV\rightarrow S^n$ be a map so that there is an open neighbourhood $U$ of $\la y\ra\in S^n$ such that $\la u(\cV)\ra \cap U=\{\}$. Then the product $\la A\ra\la u\ra$ has the limit $\la x\ra$ at $q$.
\end{lemma}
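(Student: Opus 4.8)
The plan is to work with lifts and to exploit that the limit $\la xy^*\ra$, although only a boundary point of $\Pro O(\Rmn)$ inside $\Pro End(\Rmn)$, is a rank-one map whose sole point of indeterminacy on $S^n$ is $\la y\ra$ --- precisely the point that $\la u\ra$ is assumed to avoid. All limits below are taken as $t\to q$ in $\cV$. First I would fix, for each $t\in\cV$, a representative $A(t)\in O(\Rmn)$ of $\la A(t)\ra$ and the Euclidean unit lift $u(t)\in\Li^{n+1}$, $|u(t)|=1$, of $\la u(t)\ra$; then $A(t)u(t)\in\Li^{n+1}\bs\{0\}$ is a lift of $\la A(t)\ra\la u(t)\ra$. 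Since $\la A(t)\ra\to\la xy^*\ra$ in $\Pro End(\Rmn)$ and $xy^*\neq 0$, convergence of lines in $End(\Rmn)$ provides scalars $c_t\neq 0$ with $c_t\,A(t)\to xy^*$ with respect to $|\cdot|$.

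The quantitative heart of the argument is a uniform lower bound on $\ipl u(t),y\ipr$. I would introduce the function $S^n\ni\la u\ra\mapsto|\ipl u,y\ipr|/|u|$, which is well defined (invariant under rescaling of $u$) and continuous on the compact space $S^n$. Because $y$ is null, the Minkowski-geometric fact that two nonzero null vectors are orthogonal if and only if they are parallel shows that this function vanishes exactly at $\la y\ra$. As $\la y\ra\in U$, it is therefore strictly positive on the compact set $S^n\bs U$ and attains there a minimum $\epsilon>0$; since $\la u(\cV)\ra\subset S^n\bs U$, this yields $|\ipl u(t),y\ipr|\geq\epsilon$ for every $t\in\cV$ (while trivially $|\ipl u(t),y\ipr|\leq|y|$, as $|u(t)|=1$).

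Writing $s_t:=\ipl u(t),y\ipr$ and using $xy^*u(t)=s_t\,x$ together with submultiplicativity, I obtain $|c_t A(t)u(t)-s_t\,x|=|(c_tA(t)-xy^*)u(t)|\leq|c_tA(t)-xy^*|\to 0$. Hence $r_t:=c_tA(t)u(t)-s_t x\to 0$, and dividing the lift $c_tA(t)u(t)$ by $s_t$ --- legitimate since $|s_t|\geq\epsilon$ --- gives $\tfrac{c_t}{s_t}A(t)u(t)=x+\tfrac{r_t}{s_t}$ with $|r_t/s_t|\leq|r_t|/\epsilon\to 0$. Thus a nonzero rescaling of the lift $A(t)u(t)$ converges to $x\neq 0$, which is exactly the assertion $\la A(t)\ra\la u(t)\ra=\la A(t)u(t)\ra\to\la x\ra$ in $S^n$.

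The only genuine obstacle is the degeneracy of the limit: because $\la xy^*\ra$ is not a M\"obius transformation but a boundary point of $\Pro O(\Rmn)$, one cannot simply invoke continuity of the $\Pro O(\Rmn)$-action on $S^n$. This is resolved entirely by the avoidance hypothesis, which --- through compactness of $S^n\bs U$ and the null-orthogonality fact --- is upgraded to the uniform bound $|\ipl u(t),y\ipr|\geq\epsilon$; this is what keeps the images clear of the kernel direction $\la y\ra$ and makes the rescaling by $s_t$ permissible in the limit.
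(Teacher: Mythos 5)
Your proof is correct and takes essentially the same route as the paper: both split $A(t)u(t)$ as $\big(A(t)-xy^*\big)u(t)+xy^*u(t)$ and control the first term by a uniform bound on the lift $u$ derived from the avoidance hypothesis. The only difference is the dual choice of normalization --- the paper fixes $\ipl u(t),y\ipr=-1$ and bounds $|u(t)|$ above, while you fix $|u(t)|=1$ and bound $|\ipl u(t),y\ipr|$ below via compactness of $S^n\bs U$; these are equivalent, and your compactness argument merely spells out in detail the bound the paper justifies more briefly.
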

\begin{proof}
Let $A$ be a lift of $\la A\ra$ with limit $xy^*$ at $q$ and $u$ the lift of $\la u\ra$ that satisfies $\ipl u(t),y\ipr=-1$ for all $t\in \cV$. Then $u$ is bounded with respect to the positive definite norm $|\cdot|$ by some $R\in\R^+$. Namely, if there was no such bound, then in order to maintain $\ipl u(t),y\ipr=-1$, the point $\la u(t)\ra$ would need to get arbitrarily close to $\la y\ra$, which contradicts the assumptions. 

Now write
\begin{equation}
A(t)u(t)=\big(A(t)-xy^*\big)u(t)+xy^*u(t)=\big(A(t)-xy^*\big)u(t)-x.
\label{eq:temp_lemma_prank1}
\end{equation}
Since $u$ is bounded by $R\in\R$, we have
$$|\big(A(t)-xy^*\big)u(t)|\leq |A(t)-xy^*||u(t)|\leq |A(t)-xy^*|R,$$
and so the first term on the right hand side of \eqref{eq:temp_lemma_prank1} has limit $0$ at $q$. Thus, the product $Au$ has the limit $-x$ at $q$ and $\la A\ra\la u\ra$ has the limit $\la x\ra$ at $q$.\qed
\end{proof}
Using this Lemma, the following proposition provides a sufficient condition for $\la \ch(t)\ra$ to converge to $\la c(0)\ra$ as $t\rightarrow 0$.

\begin{figure}
\centering
\includegraphics[width=0.75\textwidth]{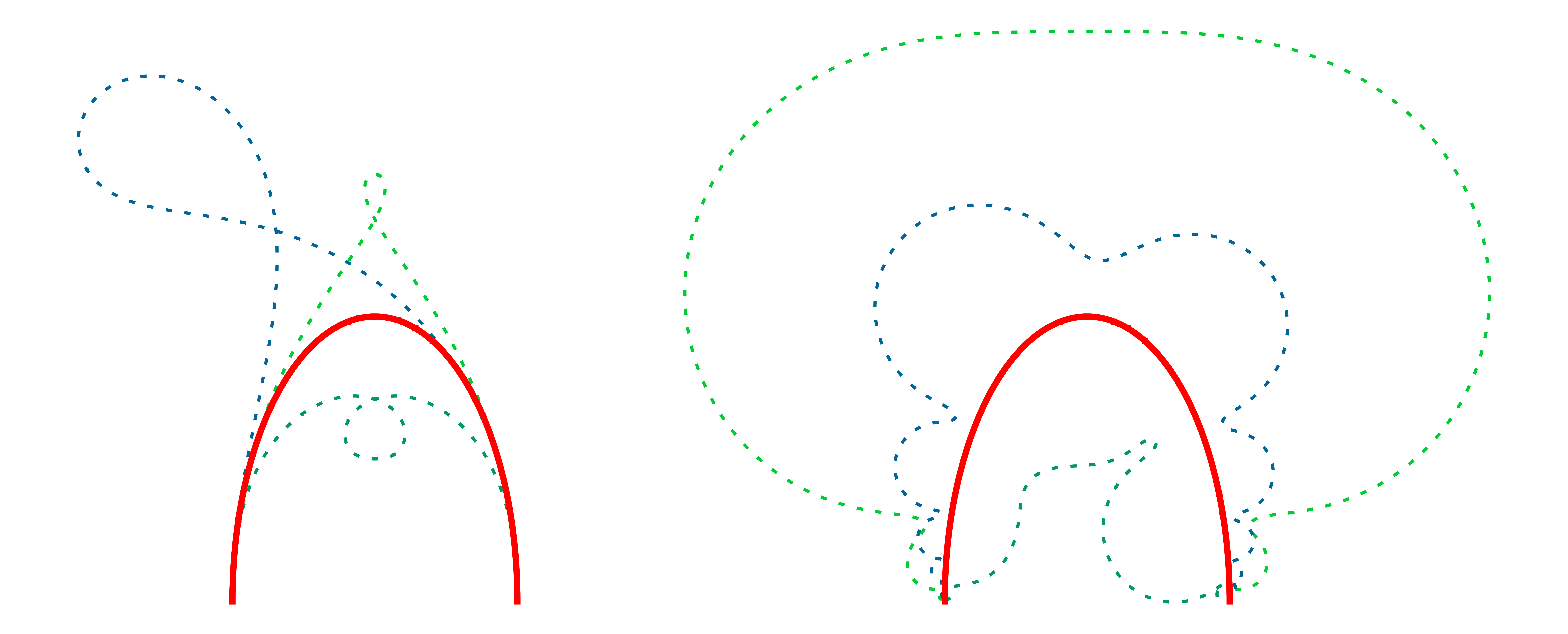}

\caption{Darboux transforms (dashed) of a half ellipse (solid) with respect to a polarization with a pole of second order at each end. The spectral parameter $\gl$ is greater (smaller) than $\frac 12$ on the right (left) side.}
\end{figure}
\begin{proposition}\label{prop:limit_secord_easy}
Let $(\la c\ra,Q)$ be a polarized curve on $(0,b)$ such that $\la c\ra$ has a regular extension to $(-\gep,b)$ and $Q$ has a pole of second order at $0$. For $\gl\in\R$ with $1-2\gl < 0$ and $\la\ch_p\ra$ not on the limit circle $\LiSe[\la c_{\gl,p}\ra]$ of the $\gl$-Calapso transform normalized at $p$ (see \eqref{eq:limit_circle_cal}), denote by $\la \ch\ra$ the $\gl$-Darboux transform of $\la c\ra$ with initial value $\la \ch(p)\ra=\la \ch_p\ra$. Then $\la \ch\ra$ converges to $\la c(0)\ra$ as $0$ is approached.
\end{proposition}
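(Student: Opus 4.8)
The plan is to write the $\gl$-Darboux transform in the singular gauge and then invoke Lemma~\ref{lemma:prankone_with_arb}. By \eqref{eq:expr_darb_trafo_secord2},
$$\la\ch(t)\ra=\la\hyfr(t)\ra\,\gG^{~p}_t(\gatr\hyfr{\gl\of})\,\la\hyfr(p)\ra^{-1}\la\ch_p\ra=\la\hyfr(t)\ra\,u(t),\qquad u(t):=\gG^{~p}_t(\gatr\hyfr{\gl\of})\,\la\hyfr(p)\ra^{-1}\la\ch_p\ra,$$
where $u$ is $S^n$-valued. By \eqref{eq:limit_g_H}, $\la\hyfr(t)\ra\to\la c(0)\basinf^*\ra$, i.e.\ a rank-one limit $\la xy^*\ra$ with $x=c(0)$, $y=\basinf\in\Li^{n+1}$. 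Hence, once I show that $u$ stays in the complement of a fixed neighbourhood of $\la\basinf\ra$ as $t\to 0$, Lemma~\ref{lemma:prankone_with_arb} (applied with $\cV=(0,\gd)$ and limit point $q=0$) yields $\la\ch(t)\ra=\la\hyfr(t)\ra u(t)\to\la c(0)\ra$, which is the claim. The entire proof therefore reduces to controlling the trajectory of $u$ near $\la\basinf\ra$.

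Since $1-2\gl<0$, by Lemma~\ref{lemma:xi_la_data} the form $\gatr\hyfr{\gl\of}$ is spacelike and hence of the first kind, so Cor~\ref{coro:integrable_gauge_trafo} applies and the factorization \eqref{eq:gaugetrafo_for_int_xi} gives
$$u(t)=\gG^{~p}_t(\ppf_\gl)\,q(t),\qquad q(t):=\gG^{~p}_t(\Gtr{\ppf_\gl}{p}{\gatr\hyfr{\gl\of}})\,\la\hyfr(p)\ra^{-1}\la\ch_p\ra.$$
Because $\gG_p(\Gtr{\ppf_\gl}{p}{\gatr\hyfr{\gl\of}})$ is continuous on $[0,b)$ with limit in $\Pro O(\Rmn)$ (Cor~\ref{coro:integrable_gauge_trafo}) and taking the adjoint is continuous there, $q(t)$ converges to $q_0:=\gG^{~p}_0(\Gtr{\ppf_\gl}{p}{\gatr\hyfr{\gl\of}})\la\hyfr(p)\ra^{-1}\la\ch_p\ra$. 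I would then translate the off-limit-circle hypothesis: comparing with the definition \eqref{eq:limit_circle_cal} of $\LiSe[\la c_{\gl,p}\ra]$ and using $\gG^{~p}_0=\big(\gG^{~0}_p\big)^{-1}$, the condition $\la\ch_p\ra\notin\LiSe[\la c_{\gl,p}\ra]$ is precisely $q_0\notin\la\baszer,\basinf,\bastan\ra$, i.e.\ $q_0$ does not lie on the invariant circle $\la\baszer,\basinf,\bastan\ra\cap\Li^{n+1}$ of the rotation $\gG_p(\ppf_\gl)$.

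Finally, by Prop~\ref{prop:boundednonzerosimplesections}(2) the family $\big(\gG^{~p}_t(\ppf_\gl)\big)_{t}$ consists of Euclidean rotations in the spacelike plane $\la v,w\ra\subset\la\baszer,\basinf,\bastan\ra$, fixing its orthogonal complement and winding with speed tending to infinity. Since this rotation preserves $\la\baszer,\basinf,\bastan\ra$, the orbit of $\la\basinf\ra$ lies in $\la\baszer,\basinf,\bastan\ra$, which $q_0$ avoids; as orbits are disjoint or equal, the compact orbit of $q_0$ is therefore bounded away from $\la\basinf\ra$, say at distance $d_0>0$. It remains to pass from $q_0$ to the moving point $q(t)$: because the acting group is a compact $O(\R^2)$ and $q(t)\to q_0$, uniform continuity of the action produces $\gd>0$ with $\mathrm{dist}(u(t),\la\basinf\ra)\ge d_0/2$ for all $t\in(0,\gd)$, the estimate required in the first paragraph, and Lemma~\ref{lemma:prankone_with_arb} then finishes the argument. \emph{The main obstacle} is exactly this last uniformity step combined with the translation of the off-limit-circle condition: one must guarantee that the whole rotation orbit of the varying point $q(t)$, not merely that of its limit $q_0$, stays away from $\la\basinf\ra$, and it is the compactness of $O(\R^2)$ together with the convergence $q(t)\to q_0$ that makes this possible.
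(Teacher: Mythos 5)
Your proof is correct and follows essentially the same route as the paper's: the same singular-gauge decomposition \eqref{eq:expr_darb_trafo_secord2}, the same factorization \eqref{eq:gaugetrafo_for_int_xi} via Cor \ref{coro:integrable_gauge_trafo}, the same identification of the off-limit-circle hypothesis with $q_0\nsubseteq\la \baszer,\basinf,\bastan\ra$ (your $q_0$ is the paper's $\la W\ra$), and the same concluding appeal to Lemma \ref{lemma:prankone_with_arb}. Your compactness/uniform-continuity argument for keeping the orbit of the moving point $q(t)$ away from $\la\basinf\ra$ is just an explicit justification of the step the paper phrases via the neighbourhoods $V$ and $U$ in \eqref{eq:temp_genericcase2}, so the two proofs coincide in substance.
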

\begin{proof}
Fix $p\in (0,b)$ and factorize $\gG_p(\gatr g{\gl\of})$ as in \eqref{eq:gaugetrafo_for_int_xi} to write $\la \ch\ra$ in the form
$$\la \ch\ra=\la g\ra\la u\ra~~\text{with}~~\la u\ra:=\gG^{~p}(\xi_\gl) \, \gG^{~0}(\Gtr{\xi_\gl}p{\gatr g{\gl\of}})\la W\ra,~~~\la W\ra:=\gG_0^{~p}(\Gtr{\xi_\gl}p{\gatr g{\gl\of}}) \, \la g(p)\ra^{-1}\la \ch_p\ra.$$
The singular frame $\la g\ra$ converges to $\la c(0)\basinf^*\ra$. We want to apply Lemma \ref{lemma:prankone_with_arb} to the product $\la g\ra\la u\ra$ and thus, we need to prove that under the assumption that $\la \ch_p\ra$ does not lie on the limit circle $\LiSe[\la c_{\gl,p}\ra]$, there is a $\ti b\in (0,b)$ and neighbourhood $U\subset S^n$ of $\la \basinf\ra$ such that
\begin{equation}
\forall t\in (0,\ti b):~~~\la u(t)\ra=\gG_t^{~p}(\xi_\gl) \, \gG_t^{~0}(\Gtr{\xi_\gl}p{\gatr g{\gl\of}}) \, \la W\ra\notin U.
\label{eq:notinU}
\end{equation}
To prove this, we note that, since $\ppf_\gl$ takes values in $\la (\baszer-\bastan)\wedge (\basinf-\gl\bastan)+\R~id\ra$, we have
$$\forall w\in \Li^{n+1}~\forall t\in(0,b):~~~~\gG_{t}^{~p}(\ppf_\gl) \, \la w\ra \subset \la w,\baszer-\bastan, \basinf-\gl\bastan\ra.$$
From this we deduce the implication
\begin{equation}
\la w\ra\nsubseteq \la \baszer,\basinf,\bastan\ra~~~\Rightarrow~~~\forall t\in(0,b):~~\gG_{t}^{~p}(\ppf_\gl)\la w\ra\neq \la \basinf\ra.
\label{eq:implication_1}
\end{equation}
By the assumption of the Proposition, $\la W\ra \nsubseteq \la \baszer,\basinf,\bastan\ra$ and hence, by \eqref{eq:implication_1},
$$\forall t\in(0,b):~~~~\gG_{t}^{~p}(\ppf_\gl)\la W\ra \neq \la \basinf\ra.$$
But then there must also be open neighbourhoods $V$ of $\la W\ra$ and $U$ of $\la \basinf\ra$ such that
\begin{equation}
\forall t\in(0,b)~\forall x\in V:~~~~\gG_{t}^{~p}(\ppf_\gl)x \notin U.
\label{eq:temp_genericcase2}
\end{equation}
Due to \eqref{eq:double_limit_intbound}, we can choose $\ti b\in(0,b)$ close enough to $0$ such that
\begin{equation}
\forall t\in(0,\ti b):~~~~\gG_t^{~0}(\Gtr {\ppf_\gl}{p}{\gatr\hyfr{\gl\of}}) \, \la W\ra\in  V.
\label{eq:temp_genericcase1}
\end{equation}
Using \eqref{eq:temp_genericcase1} in \eqref{eq:temp_genericcase2} now yields \eqref{eq:notinU}. We can thus apply Lemma \ref{lemma:prankone_with_arb} to the product $\la \ch\ra=\la g\ra\la u\ra$ restricted to $(0,\ti b)$. Since $p\in (0,b)$ was arbitrary, this completes the proof.\qed
\end{proof}

The characterization of the $\gl$-Darboux transforms excluded in Prop \ref{prop:limit_secord_easy} is independent of the point $p\in (0,b)$, at which the initial condition $\la \ch(p)\ra=\la \ch_p\ra$ is posed. As soon as $\la \ch(p)\ra \in \LiSe[\la c_\gl,p\ra]$ holds for one $p\in (0,b)$, it holds for all $p\in (0,b)$.

We finally analyse what happens, when the initial point of the Darboux transform does lie on the limit circle of the Calapso transform. In this case, we cannot apply Lemma \ref{lemma:prankone_with_arb} and indeed $\la \ch\ra$ does not converge to $\la c(0)\ra$ as $0$ is approached.

\begin{theorem}\label{theorem:darb_tr_sec_ord_pos}
Let $(\la c\ra,Q)$ be a polarized curve on $(0,b)$ where $\la c\ra$ has a regular extension to $(-\gep,b)$ and $Q$ has a pole of second order at $0$. As $0$ is approached, any $\gl$-Darboux transform $\la \ch\ra$ with $1-2\gl<0$ converges to $\la c(0)\ra$, 
$$\lim_{t\rightarrow 0}\la \ch(t)\ra=\la c(0)\ra,$$
except when $\la \ch(p)\ra$ lies on the limit circle $\LiSe[\la c_{\gl,p}\ra]$ of the Calapso transform $\la c_{\gl,p}\ra$. In that case $\la \ch\ra$ approaches the curvature circle of $\la c\ra$ at $0$ rotating with frequency increasing towards infinity as $0$ is approached.
\end{theorem}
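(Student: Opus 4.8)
The statement splits into two parts. The convergence $\lim_{t\to0}\la\ch(t)\ra=\la c(0)\ra$ when $\la\ch(p)\ra\notin\LiSe[\la c_{\gl,p}\ra]$ is already Prop~\ref{prop:limit_secord_easy}, so the real content is the exceptional case. First I would verify that the exceptional condition is exactly the one excluded there. Writing out \eqref{eq:limit_circle_cal} together with the definition of $\la W\ra$ from the proof of Prop~\ref{prop:limit_secord_easy}, and using that $\gG_p^{~0}(\Gtr{\ppf_\gl}p{\gatr\hyfr{\gl\of}})\in\Pro O(\Rmn)$ is invertible by Cor~\ref{coro:integrable_gauge_trafo} with inverse $\gG_0^{~p}(\Gtr{\ppf_\gl}p{\gatr\hyfr{\gl\of}})$, one sees that $\la\ch(p)\ra\in\LiSe[\la c_{\gl,p}\ra]$ if and only if $\la W\ra\in\la\baszer,\basinf,\bastan\ra\cap\Li^{n+1}$. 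This characterisation is $p$-independent, so I may fix a single $p$.

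In the exceptional case I would keep the factorisation from Prop~\ref{prop:limit_secord_easy},
$$\la\ch(t)\ra=\la\hyfr(t)\ra\,\gG_t^{~p}(\ppf_\gl)\,\gG_t^{~0}(\Gtr{\ppf_\gl}p{\gatr\hyfr{\gl\of}})\,\la W\ra,\qquad \la W\ra\in\la\baszer,\basinf,\bastan\ra\cap\Li^{n+1},$$
and recall that $\gG_t^{~0}(\Gtr{\ppf_\gl}p{\gatr\hyfr{\gl\of}})\to id$ as $t\to0$ by continuity (Cor~\ref{coro:integrable_gauge_trafo}). The geometric skeleton is the reference motion $\la\hyfr(t)\ra\,\gG_t^{~p}(\ppf_\gl)\la W\ra$: since $\ppf_\gl$ takes values in $\la(\baszer-\bastan)\wedge(\basinf-\gl\bastan)\ra$, its primitive preserves the circle $\la\baszer,\basinf,\bastan\ra\cap\Li^{n+1}$ and fixes the normal space $\la\basnor_1,\dots,\basnor_{n-1}\ra$; the singular factor $R$ preserves that circle too (it only rescales $\baszer,\basinf$ and fixes $\bastan$), and $F(t)$ carries it to the curvature circle $F(t)\la\baszer,\basinf,\bastan\ra\cap\Li^{n+1}$ of $\la c\ra$. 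As $F(t)\to F(0)$ this tends to the curvature circle at $0$, while by Prop~\ref{prop:boundednonzerosimplesections}(2) the rotation $\gG_t^{~p}(\ppf_\gl)$ sweeps it with frequency tending to infinity; circulating infinitely often, it has no limit.

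It then remains to show that the factor $\gG_t^{~0}(\Gtr{\ppf_\gl}p{\gatr\hyfr{\gl\of}})$ does not spoil this. Passing to orthogonal lifts I would split $\gG_t^{~p}(\Ppf_\gl)\gG_t^{~0}(\Gtr{\Ppf_\gl}p{\gatr\hyfr{\gl\Of}})W=m(t)+\epsilon(t)$, where $m(t)=\gG_t^{~p}(\Ppf_\gl)W$ sits on the circle and $\epsilon(t)$ is the error. The decisive structural point is that the component of $\epsilon(t)$ in the normal space $\la\baszer,\basinf,\bastan\ra^\perp$ — the only part that can push $\la\ch(t)\ra$ off the circle — is left unscaled by $R(t)$, which is the identity on $\la\baszer,\basinf\ra^\perp$, whereas $R(t)$ magnifies only the $\baszer$-direction (by $1/t$), and $\baszer$ lies inside the circle. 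Using the footnote refinement after \eqref{eq:gauge_trafo_eta}, that $\tfrac1t|(\gatr\hyfr{\gl\of}-\ppf_\gl)_t|$ is bounded, one gets $|\Gtr{\Ppf_\gl}p{\gatr\hyfr{\gl\Of}}(s)|=O(s)$ from \eqref{eq:bounded_1form_gatro}, hence $|\gG_t^{~0}(\Gtr{\Ppf_\gl}p{\gatr\hyfr{\gl\Of}})-id|=O(t^2)$ via the integral equation \eqref{eq:integral_eq}, and therefore $|\epsilon(t)|=O(t^2)$. Since $|R(t)m(t)|$ is bounded below by a multiple of $t$ uniformly in the rotation phase (as $m(t)$ is null on the circle with constant Euclidean norm), the off-circle part decays one power of $t$ faster than the in-circle length, so $\la\ch(t)\ra$ stays asymptotically on the curvature circle at $0$.

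I expect the main obstacle to be precisely this last estimate, because $\la\hyfr(t)\ra$ degenerates to the non-invertible rank-one map $\la c(0)\basinf^*\ra$, so the error must be carried through a singular limit. The delicate instants are those where $\la m(t)\ra$ passes near $\la\basinf\ra$, the kernel direction of $\la c(0)\basinf^*\ra$: there the in-circle length $|R(t)m(t)|$ itself drops to order $t$, so a merely \emph{bounded} perturbation $\gatr\hyfr{\gl\of}-\ppf_\gl$ would produce an off-circle error of the same order $t$ and break convergence to the circle. It is exactly the sharper $O(t)$ vanishing of $\gatr\hyfr{\gl\of}-\ppf_\gl$, combined with $R(t)$ leaving the normal directions unscaled, that renders the off-circle error negligible even at those instants.
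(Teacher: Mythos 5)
Your reduction is sound: the non-exceptional case is indeed Prop \ref{prop:limit_secord_easy}, your characterization of the exceptional case ($\la\ch_p\ra\in\LiSe[\la c_{\gl,p}\ra]$ iff $\la W\ra\subset\la\baszer,\basinf,\bastan\ra$) is exactly the paper's substitution \eqref{eq:temp_lastprove1}, and your argument that $\la\ch\ra$ approaches the curvature circle is essentially correct and close in spirit to the paper's step 1: the paper likewise exploits the boundedness of $\Phi_t/t$ (it shows $\gs_i(t)=\ipl\gG_t^{~0}(\Phi)\Ch_p,\basnor_i\ipr$ and $\pr\gs_i$ both tend to zero and invokes the mean value theorem, rather than your explicit $|\gG_t^{~0}(\Phi)-id|=O(t^2)$). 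One small repair: you should compare the normal error to $|R(t)(m+\epsilon)(t)|$ rather than to $|R(t)m(t)|$, since the $\baszer$-component of $\epsilon$, magnified by $1/t$, is $O(t)$ — the same order as your lower bound, with an unrelated constant; the lower bound survives via the $\basinf$-coefficient (when the phase is near $\la\basinf\ra$, nullity forces $|\beta|$ bounded below), so this is fixable.

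The genuine gap is the rotation claim, which your proposal asserts but does not prove. Your estimate $|\epsilon(t)|=O(t^2)$ is exactly borderline: under $\tfrac1t R(t)$ the $\baszer$-component of $\epsilon$ is magnified by $1/t^2$ and so contributes $O(1)$ to the projective position \emph{along} the circle, precisely at the delicate instants when $m(t)$ passes $\la\basinf\ra$ — the only instants at which the transform leaves a neighbourhood of $\la c(0)\ra$. Hence you cannot conclude from your bounds that $\la\ch\ra$ ever comes near $\la\bar c(0)\ra$, or that it winds at all; a priori it could approach the circle while simply converging to $\la c(0)\ra$ or oscillating without completing loops, so "circulating infinitely often, it has no limit" does not transfer from the reference motion. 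The paper needs two further steps for which your proposal has no analogue. First (step 2), along a sequence $t_i$ with $\gG_{t_i}^{~p}(\Ppf_\gl)\Ch_p=\basinf$ it applies the integral equation \eqref{eq:integral_eq} \emph{twice}: the first-order term $\ipl\Phi(\tau)\,\Ch_p,\Ch_p\ipr$ vanishes by antisymmetry of $\ort(\Rmn)$ against the null vector $\Ch_p$, so the critical matrix element $\ipl\gG_{t_i}^{~0}(\Phi)\Ch_p,\Ch_p\ipr$ is of fourth order rather than your generic second order — this extra cancellation is what makes $\la\bar c(0)\ra$ a second limit point. Second (step 3), even granted two limit points, "frequency increasing towards infinity" is obtained from the circle congruence enveloped by $\la c\ra$ and $\la\ch\ra$, the speed bound \eqref{eq:lowerboundonU2} (which gives $\|\pr\ch\|\gtrsim 1/t$ away from $\la c(0)\ra$), and a stereographic tangency computation forcing $\pr\vphi\to\pm\infty$. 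Without these two ingredients the final sentence of the theorem remains unproved.
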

\begin{proof}
The first part of the theorem was proved above (see Prop \ref{prop:limit_secord_easy}). What remains to be confirmed is the exception. The analysis of this exceptional case is laborious and will occupy the rest of this section. We prove the theorem in three steps. First, we show that $\la \ch\ra$ approaches the curvature circle of $\la c\ra$ at $0$. Then we show that $\la \ch\ra$ has at least two distinct limit points on that curvature circle at $0$. In the third step, we use that $\la \ch\ra$ and $\la c\ra$ envelop a circle congruence to argue that $\la \ch\ra$ tends towards a rotation on the curvature circle of $\la c\ra$ at $0$ with frequency increasing towards infinity as $0$ is approached.

Throughout this proof we consider $p\in (0,b)$ and $\gl\in\R$ with $1-2\gl<0$ fixed. To simplify notation, we define the integrable 1-form $\phi$ and its orthogonal lift $\Phi$ by
$$\phi:=\Gtr {\ppf_\gl}{p}{\gatr\hyfr{\gl\of}},~~~~~~~~\Phi:=\Gtr {\Ppf_\gl}{p}{\gatr\hyfr{\gl\Of}}.$$
In particular, from \eqref{eq:gauge_trafo_eta} and \eqref{eq:gaugetrafo_poleformfirstkind} we find
$$\Phi_t=\Phi(t)\dd t=\gG_p^{~t}(\Ppf_\gl)~t~\baszer\wedge \Fr^\perp_t~\gG_t^{~p}(\Ppf_\gl),$$
such that $\Phi_t/\id$ is bounded on $(0,b)$.

Let $\la \ch_p\ra$ be any point on $\LiSe[\la c_{\gl,p}\ra]$, that is (cf Thm \ref{theorem:cal_tra_at_b_secord_pos}),
\begin{equation}
\la\ch_p\ra=\la \hyfr(p)\ra\gG_{p}^{~0}(\phi)\la\Ch_p\ra~~~~\text{with}~~~\Ch_p\in\la \baszer,\basinf,\bastan\ra.
\label{eq:temp_lastprove1}
\end{equation}
Then substitution of \eqref{eq:temp_lastprove1} in \eqref{eq:expr_darb_trafo_secord2} and using $\gG^{~p}(\gatr g{\gl\of})=\gG^{~p}(\xi_\gl)\gG^{~p}(\phi)$ yields
\begin{equation}
\la \ch\ra=\la g\ra \, \gG^{~p}(\gatr \hyfr {\gl\of}) \, \gG_{p}^{~0}(\phi) \, \la\Ch_p\ra=\la g\ra \, \gG^{~p}(\ppf_\gl) \, \gG^{~0}(\phi) \, \la\Ch_p\ra.
\label{eq:ch_in_lastproof}
\end{equation}

\begin{enumerate}
	\item Denote by $\Ppf_\gl$ the orthogonal lift of $\ppf_\gl$ and let $\ch$ be the lift
	\begin{equation}
	\ch(t)=\frac{1}{\id}\hyfr(t) \, \gG_t^{~p}(\Ppf_\gl) \, \gG_t^{~0}(\Phi) \, \Ch_p
	\label{eq:lift_bounded_from_below}
	\end{equation}
	for some nonzero $\Ch_p\in\la \Ch_p\ra$. In Sect \ref{section:sing_gauge}, we chose the normal fields $\{N_1,...,N_{n-1}\}$ such that $\la N_1,...,N_{n-1}\ra^\perp$ is the curvature circle congruence of $\la c\ra$. Therefore, since $\ch$ is a lift of $\la \ch\ra$ that stays away from zero, in order to show that $\la \ch\ra$ approaches the curvature circle of $\la c\ra$ at $0$, it is sufficient to show that the $n-1$ inner products $\ipl \ch,N_i \ipr$ all tend towards zero. Since $\hyfr^{-1}N_i=\basnor_i$ and $\Ppf_\gl(\basnor_i)=0$ (see Sect \ref{section:sing_gauge}), the inner product of \eqref{eq:lift_bounded_from_below} with $N_i$ reads
	$$\ipl \ch(t),N_i(t) \ipr=\frac{1}{\id}\Ipl\gG_t^{~0}(\Phi) \, \Ch_p,\basnor_i\Ipr=:\frac 1 {\id}\gs_i(t).$$
	Since $\Ch_p$ is an element of $\la \baszer,\basinf,\bastan\ra$ and $\basnor_i\in \la \baszer,\basinf,\bastan\ra^\perp$, all the functions $\gs_i(t)$ tend to zero as $t\rightarrow 0$. Since $\Phi(t)/t$ is bounded, $\Phi(t)$ tends to zero as $0$ is approached and thus also the first derivatives of all $\gs_i$ tend to zero as $0$ is approached. An application of the mean value theorem to $\gs_i$ then yields that $\ipl \ch,N_i \ipr$ tends to zero and indeed $\la \ch\ra$ approaches the curvature circle of $\la c\ra$ at $0$ as $0$ is approached.
	
	\item We now prove that $\la \ch\ra$ has at least two limit points at $0$, namely $\la c(0)\ra$ and $\la \bar c(0)\ra$, where $\bar c$ is as in Sect \ref{section:sing_gauge}.
	
	To prove that $\la c(0)\ra$ is a limit point, let $(t_i)_{i\in\mathbb N}$ be a sequence in $(0,b)$ which converges to $0$ and is such that
	$$\forall i\in\mathbb N:~\gG_{t_i}^{~p}(\ppf_\gl) \, \la\Ch_p\ra= \la \baszer\ra.$$
	In particular 
	$$\forall i,j>0:~\gG_{t_i}^{~t_j}(\ppf_\gl)=id.$$
	Then, evaluating \eqref{eq:ch_in_lastproof} at $t_i$ and taking the limit $i\rightarrow\infty$ yields
	$$	\lim_{i\rightarrow \infty}\la \ch(t_i)\ra=\lim_{i\rightarrow \infty}\la \hyfr(t_i)\ra\gG_{t_i}^{~p}(\ppf_\gl)\gG_{t_i}^{~0}(\phi)\la \Ch_p\ra=\la c(0)\basinf^*\ra\gG_{t_1}^{~0}(\ppf_\gl)\la \Ch_p\ra=\la c(0)\basinf^*\ra\la \baszer\ra=\la c(0)\ra.$$
	Thus, $\la c(0)\ra$ is a limit point of $\la \ch\ra$.
	
	To prove that also $\la \bar c(0)\ra$ is a limit point, rescale $\Ch_p$ if necessary and let $(t_i)_{i\mathbb N}$ be a sequence in $(0,b)$ which converges to $0$ and is such that
	\begin{equation}
	\forall i\in\mathbb N:~\gG_{t_i}^{~p}(\Ppf_\gl) \Ch_p=  \basinf.
	\label{eq:secondsequence_excepcase}
	\end{equation}
		The lift \eqref{eq:lift_bounded_from_below} evaluated at $t_i$ yields the sequence $(\ch(t_i))_{i\in\mathbb N}$ with
	$$\ch(t_i)=\frac{1}{t_i}\hyfr(t_i) \, \gG_{t_1}^{~p}(\Ppf_\gl) \, \gG_{t_i}^{~0}(\Phi) \, \ch_p.$$
	We have already proved above that the $n-1$ quantities $\ipl \ch,N_i \ipr$ converge to zero. We now show that also both $\ipl \ch(t_i),\bar c(t_i)\ipr$ and $\ipl \ch(t_i),\pr c(t_i)/\|\pr c(t_i)\|\ipr$ converge to zero and that $\ipl \ch(t_i),c(t_i)\ipr$ converges to $-1$. It then follows that $\la \ch(t_i)\ra$ converges to $\la \bar c(0)\ra$.
	
	Use $g^{-1}(t)\bar c(t)=\frac 1 t \basinf$ and \eqref{eq:secondsequence_excepcase} to write
	$$\ipl \ch(t_i),\bar c(t_i)\ipr =\frac{1}{t_i^2}\Ipl\gG_{t_i}^{~p}(\Ppf_\gl) \, \gG_{t_i}^{~0}(\Phi) \, \Ch_p,\basinf\Ipr=\frac{1}{t_i^2}\Ipl\gG_{t_i}^{~0}(\Phi) \, \Ch_p,\Ch_p\Ipr.$$
	Now use \eqref{eq:integral_eq} for $\gG_{t}^{~p}(\Phi)$ twice and the antisymmetry of elements of $\ort(\Rmn)$ to obtain
	$$\ipl\ch(t_i),\bar c(t_i)\ipr=\frac{1}{t_i^2}\int^0_{t_i}\int^0_{\tau}\Ipl\Phi(\tau) \, \Phi(\ti\tau) \, \gG_{\ti\tau}^{~0}(\Phi) \, \Ch_p,\Ch_p\Ipr \dd\ti\tau\dd\tau.$$
	Since $\Phi(t)/t$ is bounded, indeed $\ipl \ch(t_i),\bar c(t_i)\ipr$ converges to zero as $i\rightarrow \infty$.
	
	To show that the inner product of $\ch(t_i)$ with $\pr c(t_i)/\|\pr c(t_i)\|$ converges to zero, we write
	$$\ipl \ch(t_i),\pr c(t_i)/\|\pr c(t_i)\|\ipr=\frac{1}{t_i}\Ipl\gG_{t_i}^{~p}(\Ppf_\gl) \, \gG_{t_i}^{~0}(\Phi) \, \Ch_p,\bastan\Ipr.$$
	Again use the integral equation for $\gG^{~0}(\Phi)$, the boundedness of $\Phi/\id$ and \eqref{eq:secondsequence_excepcase} to conclude that $\ipl\ch(t_i),\pr c(t_i)/\|\pr c(t_i)\|\ipr$ converges to zero as $i\rightarrow\infty$.
	
	Finally,
	$$\ipl\ch(t_i),c(t_i)\ipr=\Ipl\gG_{t_i}^{~p}(\Ppf_\gl) \, \gG_{t_i}^{~0}(\Phi) \, \Ch_p,\baszer\Ipr$$
	which converges to $\ipl \gG_{t_1}^{~p}(\Ppf_\gl)\Ch_p,\baszer\ipr=\ipl \basinf,\baszer\ipr=-1$.
	
	Thus, indeed $\lim_{i\rightarrow\infty}\ch(t_i)=\bar c(0)$ such that $\la \bar c(0)\ra$ is also a limit point of $\la \ch\ra$.

\item We now show that $\la \ch\ra$ tends towards a rotation on the curvature circle of $\la c\ra$ at $0$. Since both $\la c(0)\ra$ and $\la \bar c(0)\ra$ are limit points of $\la \ch\ra$, it then follows that the frequency of this rotation must tend towards infinity.

Choose $\ti b\in (0,b)$ such that $\la \bar c(0)\ra\notin \la c((0,\ti b))\ra$. Then there is a closed neighbourhood $U\subset S^n$ of $\la \bar c(0)\ra$ whose intersection with $\la c((0,\ti b))\ra$ is empty. Define 
$$\I:=\{t\in (0,\ti b)~|~\la \ch(t)\ra\in U\}.$$
Since $\la \bar c(0)\ra$ is a limit point of $\la \ch\ra$, the set $\I$ is non-empty and has the limit point $0$. Now let $\bar c$, $c$ and $\ch$ be spherical lifts of $\la \bar c\ra$, $\la c\ra$ and $\la \ch\ra$, respectively. The circle congruence $\cS$ enveloped by $\la \ch\ra$ and $\la c\ra$ is 
$$\cS=\la \ch,c,\pr c\ra=\La \frac{1}{\ipl c,\ch\ipr}\Pi_{\la N_1,...,N_{n-1}\ra}\ch-\bar c,c,\pr c\Ra.$$
We have showed above that $\Pi_{\la N_1,...,N_{n-1}\ra}\ch$ converges to zero as $0$ is approached. Moreover, we have constructed $\I$ in a way that there exists a constant $C>0$ such that
\begin{equation}
\forall t\in\I:~~~~\ipl c(t),\ch(t)\ipr>C.
\label{eq:lowerboundonU1}
\end{equation}
Therefore, $\cS$ restricted to $\I$ converges to the curvature circle $\la \bar c(0),c(0),\pr c(0)\ra$ of $\la c\ra$ at $0$. \eqref{eq:lowerboundonU1} also implies that inside $U$ the speed of $\la \ch\ra$ tends towards infinity because 
\begin{equation}
\forall t\in\I:~~~~\|\pr \ch(t)\|^2=\|\gl \Qend\,(c\wedge \pr c)(\ch)\|^2=\frac{\gl^2}{t^2}\ipl c(t),\ch(t)\ipr^2\|\pr c\|^2>\frac{\gl^2}{t^2}C^2\|\pr c\|^2.
\label{eq:lowerboundonU2}
\end{equation}
Now stereographically project $S^n$ to $\R^n$ such that the curvature circle of $\la c\ra$ at $0$ gets mapped to the unit circle in the $\la e_1,e_2\ra$-plane with centre the origin. Write the projection $\chfr$ of $\la \ch\ra$ as 
$$\chfr(t)=\rho(t)\left(\sin(\vphi(t))e_1+\cos(\vphi(t))e_2\right)+Z(t),$$
where $\vphi:(0,b)\rightarrow \R$ is smooth and $Z(t)\in \la e_3,...,e_n\ra$ for all $t\in (0,b)$. Since $\la \ch\ra$ approaches the curvature circle of $\la c\ra$ at $0$, the function $Z$ must converge to zero and $\rho$ must converge to $1$ as $t\rightarrow 0$. Since moreover the enveloped circle congruence $\cS$ restricted to $\I$ converges to the curvature circle of $\la c\ra$ at $0$, on $\I$ the angle between the tangent to $\chfr$ and the tangent to the circle $\sin(\vphi)e_1+\cos(\vphi)e_2$ must converge to $\pm 1$, that is,
$$\pm 1=\lim_{\overset{t\rightarrow 0}{t\in \I}}\Ipl \frac{\pr\chfr(t)}{\|\pr\chfr(t)\|},\cos(\vphi(t))e_1-\sin(\vphi(t))e_2\Ipr=\lim_{\overset{t\rightarrow 0}{t\in \I}}\frac{\rho(t)\pr\vphi(t)}{\sqrt{\pr\rho(t)^2+\rho^2\pr\vphi(t)^2+\|\pr Z(t)\|^2}}.$$
Due to the lower bound \eqref{eq:lowerboundonU2}, the square root in this expression tends to infinity and so also $\pr\vphi$ must tend to plus or minus infinity as $0$ is approached in $\I$. Together with the result that also $\la c(0)\ra$ is a limit point of $\la \ch\ra$, this implies that $\vphi$ tends to plus or minus infinity such that indeed the frequency of the rotation of $\la \ch\ra$ on the curvature circle tends towards infinity.

\end{enumerate}
\qed
\end{proof}

\begin{figure}
\centering
\includegraphics[width=0.75\textwidth]{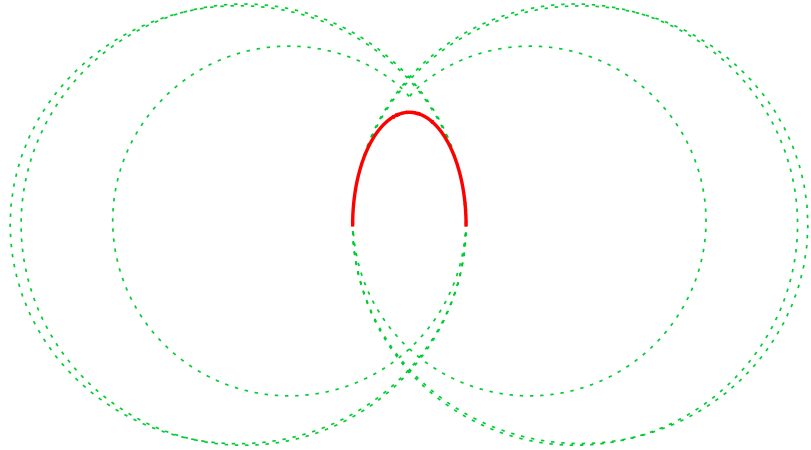}

\caption{Darboux transform (dashed) of a half ellipse (solid) with respect to a polarization with a pole of second order at each end. The spectral parameter $\gl$ is positive and the initial point $\la \ch(p)\ra$ lies on the limit circle of the Calapso transform normalised at $p$.}
\end{figure}

\bibliographystyle{spmpsci}      
\bibliography{bib}

\end{document}